\newcommand{\loc}{\textup{loc}}
\renewcommand{\H}{\mathcal{H}}
\newcommand{\R}{\mathbb{R}}
\newtheorem{thm}{Theorem}[section]
\newtheorem{prop}[thm]{Proposition}
\newtheorem{cor}[thm]{Corollary}
\newtheorem{lem}[thm]{Lemma}
\theoremstyle{definition}
\newtheorem{xmp}[thm]{Example}
\theoremstyle{remark}
\newtheorem{rem}[thm]{Remark}
\numberwithin{equation}{section}
\newcommand{\dd}{\partial}
\renewcommand{\d}{\,\textup{d}}
\newcommand{\supp}{\operatorname{supp}}
\newcommand{\dist}{\operatorname{dist}}
\newcommand{\lap}{\Delta}
\renewcommand{\div}{\operatorname{div}}
\newcommand{\osc}{\operatornamewithlimits{osc}}
\renewcommand{\Re}{\operatorname{Re}}
\renewcommand{\H}{\mathcal{H}}
\def\Xint#1{\mathchoice
{\XXint\displaystyle\textstyle{#1}}%
{\XXint\textstyle\scriptstyle{#1}}%
{\XXint\scriptstyle\scriptscriptstyle{#1}}%
{\XXint\scriptscriptstyle\scriptscriptstyle{#1}}%
\!\int}
\def\XXint#1#2#3{{\setbox0=\hbox{$#1{#2#3}{\int}$}
\vcenter{\hbox{$#2#3$}}\kern-.5\wd0}}
\def\dashint{\Xint-}
\title{The two-phase fractional obstacle problem}
\author{Mark Allen}
\address{Department of Mathematics, The University of Texas at Austin,
Austin, TX 78712, USA}
\email{mallen@math.utexas.edu}
\author{Erik Lindgren}
\address{Department of Mathematics, Royal Institute of Technology, 100 44 Stockholm, Sweden}
\email{eriklin@kth.se}
\author{Arshak Petrosyan}
\address{Department of Mathematics, Purdue University, West Lafayette,
IN 47907, USA}
\email{arshak@math.purdue.edu}
\subjclass[2010]{Primary 35R35. Secondary 35B65, 35J70}
\keywords{Thin obstacle problem, fractional obstacle problem,
  two-phase free boundary problem, separation of phases, fractional Laplacian,
  Almgen's frequency formula, Weiss-type monotonicity formula,
  Alt-Caffarelli-Friedman monotoncity formula}
\thanks{M.A.\ and A.P.\ are partially supported by NSF grant
  DMS-1101139. E.L.\ thanks the Royal Swedish Academy of Sciences for partial support.}
\begin{document} 
\begin{abstract}
We study minimizers of the functional 
$$
\int_{B_1^+}|\nabla u|^2 x_n^a\d x +2\int_{B_1'} (\lambda_+
u^++\lambda_- u^-)\d x',
$$
for $a\in(-1,1)$.
The problem arises in connection with heat flow with control on the
boundary. It can also be seen as a non-local analogue of the, by now
well studied, two-phase obstacle problem. Moreover, when $u$ does not
change signs this is equivalent to the fractional obstacle problem.
Our main results are the optimal regularity of the minimizer and
the separation of the two free boundaries $\Gamma^+=\partial'\{u(\cdot,0)>0\}$ and
$\Gamma^-=\partial'\{u(\cdot,0)<0\}$ when $a\geq 0$.
\end{abstract}

\maketitle

\section{Introduction and motivation}

\subsection{The problem}
The main purpose of this paper is to study the minimizers of the
energy functional
\begin{equation}\label{eq:jfunc}
J_a(u)=\int_{D^+}|\nabla u|^2 x_n^a \d x +2\int_{D'}(\lambda_+
u^++\lambda_- u^-)\d x',
\end{equation}
where $D$ is a bounded domain in $\R^n$, 
$$
D^+:= D\cap\{x_n>0\},\quad D':=D\cap\{x_n=0\},
$$
$a\in(-1,1)$, $\lambda_\pm>0$, and
$u^\pm=\max\{\pm u,0\}$.
More precisely, we look at the functions $u$ in the weighted Sobolev space
$W^{1,2}(D^+,x_n^a)$, with prescribed boundary values 
\begin{equation}\label{eq:jfunc-bdry}
u=g\quad\text{on }(\partial D)^+:=\partial D\cap\{x_n>0\},
\end{equation}
in the sense of traces. This leads to a two-phase problem on $D'$ if we
identify the regions
$$
\Omega^+=\Omega^+_u:=\{u>0\}\cap D',\quad \Omega^-=\Omega^-_u:=\{u<0\}\cap D'
$$
as the phases of the minimizer $u$. We call their boundaries in $D'$
$$
\Gamma^+=\Gamma^+_u:=\partial'\Omega^+\cap D',\quad \Gamma^-=\Gamma^-_u:=\partial'\Omega^-\cap D'
$$
\emph{free boundaries}, since they are apriori unknown ($\partial'$
here stands for the boundary in $\R^{n-1}$). The
free boundaries of this type are known as \emph{thin free boundaries}
in the literature, since they are formally of co-dimension
two. Particular questions of interest are the regularity of the
minimizers and the regularity and structure of their free boundaries.

The problem above is strongly related to the so-called 
obstacle problem for fractional Laplacian $(-\Delta_{x'})^s$, $0<s<1$,
or more precisely its localized version. The latter
problem consists of minimizing  the
energy functional
\begin{equation}\label{eq:efunc}
E_a(v)=\int_{D^+} |\nabla v|^2x_n^a\d x,\quad a=1-2s
\end{equation}
among all functions $v\in W^{1,2}(D^+,x_n^a)$ satisfying
\begin{alignat}{2}
\label{eq:efunc-bdry}v&=h&\quad&\text{on }(\partial D)^+,\\
\label{eq:efunc-obst}v&\geq \phi &&\text{on }D'.
\end{alignat}
The function $\phi$ above is know as the \emph{thin obstacle}. In the
particular case of $a=0$, this problem is the scalar version of the
Signorini problem, also know as the \emph{thin} (or \emph{boundary})
\emph{obstacle problem}. The free boundary in this problem is
$$
\Gamma_{v}=\partial'\{v(\cdot,0)>\phi\}\cap D'.
$$
There has been a surge of interest in such
problems in recent years, because of the development of new tools such
as the Caffarelli-Silvestre extension, Almgren-type monotonicity formulas,
etc, that allowed significant advances in the study of the
regularity properties of the free boundaries; see \cite{AC04,Sil07,ACS07,CSS08,gp09}.

The exact connection between the minimizers of
\eqref{eq:jfunc}--\eqref{eq:jfunc-bdry} and
\eqref{eq:efunc}--\eqref{eq:efunc-obst} is given in the following
(relatively straightforward) 
proposition.

\begin{prop}\label{prop:noneg-one-phase} Let $u$ be a minimizer of
  \eqref{eq:jfunc}--\eqref{eq:jfunc-bdry} such that $u\geq 0$ on
  $D'$. Then $v(x)=u(x)-\frac{\lambda_+}{1-a} x_n^{1-a}$ is a
  minimizer of \eqref{eq:efunc}--\eqref{eq:efunc-obst} with
  $h(x)=g(x)-\frac{\lambda_+}{1-a} x_n^{1-a}$ and $\phi=0$.
\end{prop}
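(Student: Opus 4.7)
The plan is to carry out an explicit shift by the particular function $w(x)=\frac{\lambda_+}{1-a}x_n^{1-a}$, which encodes the boundary condition $\lambda_+$ of the minimizer of $J_a$ into a harmonic (in the weighted sense) correction. Direct computation gives $x_n^a\partial_n w\equiv\lambda_+$, hence $\div(x_n^a\nabla w)=0$ in $D^+$, and $w=0$ on $D'$ since $1-a>0$. In particular $w\in W^{1,2}(D^+,x_n^a)$, and therefore $v=u-w$ lies in the same weighted Sobolev space with $v=g-w=h$ on $(\partial D)^+$ and $v=u\geq 0$ on $D'$, so $v$ is an admissible competitor for \eqref{eq:efunc}--\eqref{eq:efunc-obst} with $\phi=0$.

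The second step is to set up a bijection between the admissible classes. Given any $\tilde v\in W^{1,2}(D^+,x_n^a)$ with $\tilde v=h$ on $(\partial D)^+$ and $\tilde v\geq 0$ on $D'$, I set $\tilde u=\tilde v+w$. Then $\tilde u=g$ on $(\partial D)^+$ is an admissible competitor for $J_a$, and $\tilde u=\tilde v\geq 0$ on $D'$. Expanding the square,
\[
E_a(\tilde v)-E_a(v)=\int_{D^+}x_n^a(|\nabla\tilde u|^2-|\nabla u|^2)\,dx-2\int_{D^+}x_n^a\nabla(\tilde u-u)\cdot\nabla w\,dx.
\]
Because $x_n^a\nabla w=\lambda_+ e_n$ is constant, the cross term reduces by Fubini and the fundamental theorem of calculus (in the $x_n$ direction, using that $\tilde u-u$ vanishes on $(\partial D)^+$) to $-\lambda_+\int_{D'}(\tilde u-u)\,dx'$. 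Since $u$ and $\tilde u$ are both nonnegative on $D'$, we have $u^+=u$, $\tilde u^+=\tilde u$ and $u^-=\tilde u^-=0$, so substituting yields exactly
\[
E_a(\tilde v)-E_a(v)=J_a(\tilde u)-J_a(u)\geq 0,
\]
with the final inequality coming from the minimality of $u$. This proves that $v$ minimizes $E_a$.

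The main technical point to check carefully is the integration by parts / Fubini step with the weight $x_n^a$. For $a<0$ the weight is singular at $D'$, but since $x_n^a$ is a Muckenhoupt $A_2$ weight on $(-1,1)$, any $\psi\in W^{1,2}(D^+,x_n^a)$ lies in $W^{1,1}_\loc(D^+)$ by Hölder against $x_n^{-a}$, which is integrable near $\{x_n=0\}$; this legitimizes the slice-wise fundamental theorem of calculus and the boundary trace on $D'$. Once this is in place, the computation is algebraic. I expect the only nontrivial conceptual input to be the initial guess of the corrector $w$, and the key structural observation that $x_n^a\partial_n w=\lambda_+$ is constant, which simultaneously kills the weighted divergence and reproduces the boundary penalty coefficient.
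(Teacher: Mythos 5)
Your proof takes essentially the same route as the paper's: shift by the explicit corrector $w(x)=\frac{\lambda_+}{1-a}x_n^{1-a}$ and observe that the cross term in $|\nabla u-\nabla w|^2$, after multiplying by $x_n^a$, produces a boundary integral on $D'$ that reproduces the penalty term of $J_a$. The paper phrases this as a single identity $E_a(v)=J_a(u)+C(D^+,g)$ with the constant independent of the competitor; your difference form $E_a(\tilde v)-E_a(v)=J_a(\tilde u)-J_a(u)$ is an equivalent packaging of the same computation, and your bijection observation between the two admissible classes is implicit but correct in the paper as well.

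There is, however, a sign-and-factor slip in your intermediate step. The cross term is
\[
-2\int_{D^+}x_n^a\nabla(\tilde u-u)\cdot\nabla w\,dx
=-2\lambda_+\int_{D^+}\partial_{x_n}(\tilde u-u)\,dx
=+2\lambda_+\int_{D'}(\tilde u-u)\,dx',
\]
where the final sign comes from the fundamental theorem of calculus, since the outward normal on $D'$ is $-e_n$ and $\tilde u-u$ vanishes on $(\partial D)^+$. You wrote $-\lambda_+\int_{D'}(\tilde u-u)\,dx'$, which is off by a factor of $-2$; with your stated coefficient the algebra would not close, since $J_a(\tilde u)-J_a(u)$ contributes exactly $+2\lambda_+\int_{D'}(\tilde u-u)$ from the boundary penalty (using $u^+=u$, $\tilde u^+=\tilde u$ on $D'$). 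Once that is fixed, the claimed identity $E_a(\tilde v)-E_a(v)=J_a(\tilde u)-J_a(u)$ holds exactly, and the minimality of $u$ over the full class implies the minimality of $v$ over the constrained class, as you argue. Your remark that $w\in W^{1,2}(D^+,x_n^a)$ deserves a one-line verification ($|\nabla w|^2x_n^a=\lambda_+^2x_n^{-a}$ is integrable since $-a>-1$), but this is a minor point.
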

\begin{proof}
The proof is an immediate corollary of the following computation:

\begin{align*}
E_a(v)&=\int_{D^+} |\nabla u -\lambda_+
x_n^{-a}e_n|^2 x_n^a\\
&=\int_{D^+}(|\nabla u|^2x_n^a+\lambda_+^2
x_n^{-a}-2\lambda_+\partial_{x_{n-1}}u)\\
&=\int_{D^+}|\nabla u|^2x_n^a+2\int_{D'}(\lambda_+ u^+)+ C(D^+, g)\\
&=J_a(u)+C(D^+,g).\qedhere
\end{align*}
\end{proof}
In particular, when $u\geq 0$ we see that $u(x',0)\equiv v(x',0)$ and
consequently
$$
\Gamma^+_u=\Gamma_v,
$$
reducing the study of the free boundary in
\eqref{eq:jfunc}--\eqref{eq:jfunc-bdry} to that in the fractional
obstacle problem  \eqref{eq:efunc}--\eqref{eq:efunc-obst}. For the reasons above, we call the minimization problem \eqref{eq:jfunc}--\eqref{eq:jfunc-bdry} the
\emph{two-phase fractional obstacle problem}. When $a=0$ we also call it the 
\emph{two-phase thin obstacle problem}.

The two-phase fractional obstacle problem can also be viewed as a 
nonlocal version of the so-called \emph{two-phase obstacle problem} (also known as the two-phase membrane
problem)
\begin{equation}\label{eq:2phase-obst}
\Delta u=\lambda_+\chi_{\{u>0\}}-\lambda_-\chi_{\{u<0\}}\quad\text{in }D.
\end{equation}
Solutions of \eqref{eq:2phase-obst} can be
obtained by minimizing the the energy functional
$$
\tilde J(u)=\int_{D}(|\nabla u|^2+2\lambda_+ u^++2\lambda_- u^-)\d x.
$$
This
problem has been studied in detail in a series of papers \cite{Wei01,Ura01,SUW-global,
SW,SUW}. In particular, it is known that the
minimizers are $C^{1,1}$-regular and that the free boundaries $\tilde
\Gamma^\pm:=\partial\{u>0\}\cap D$ are graphs of $C^1$ functions near
two-phase points (i.e.\ points on $\tilde \Gamma^+\cap
\tilde\Gamma^-$). Away from two-phase points, the problem is locally
equivalent to the classical obstacle problem, which is extensively
studied in the literature. We also refer to the book
\cite{PSU12} for the introduction to this and other obstacle-type
problems.

\subsection{Main results} Our first result concerns the regularity of the minimizers.

\begin{thm}[Optimal regularity]\label{thm:main-opt-reg} Let $u$ be a minimizer of
  \eqref{eq:jfunc}. Then for any  $K\Subset D^+\cup D'$ we have
\begin{alignat*}{2}
u&\in C^{0,1-a}(K),&\quad&\text{if }a\geq 0\\
u&\in C^{1,-a}(K), &\quad&\text{if }a< 0
\end{alignat*}
with bounds on their respective norms, 
depending only on $K$, $D$, $a$, $\lambda_\pm$, $n$, and $\|u\|_{L^2(D^+)}$.
\end{thm}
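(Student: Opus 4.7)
My plan is to split the analysis by location, reducing the theorem to the uniform growth estimate $\|u\|_{L^\infty(B_r^+(x_0))}\leq C r^{1-a}$ at each free boundary point $x_0\in\Gamma^+\cup\Gamma^-$. Once this growth is in hand, pointwise $C^{0,1-a}$ (resp.\ $C^{1,-a}$) regularity follows by standard rescaling against interior estimates for the weighted operator $\div(x_n^a\nabla\cdot)$. In the bulk $D^+$, the Euler--Lagrange equation $\div(x_n^a\nabla u)=0$ has the Muckenhoupt $A_2$ weight $x_n^a$ (since $|a|<1$), so Fabes--Kenig--Serapioni theory supplies the required interior smoothness. The only real difficulty is near the thin set $D'$.

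At a one-phase free boundary point $x_0\in\Gamma^+\setminus\Gamma^-$ (and symmetrically for $\Gamma^-$), the growth bound follows from Proposition \ref{prop:noneg-one-phase} applied in a thin half-neighborhood where $u\geq 0$: it converts the problem locally into the fractional obstacle problem \eqref{eq:efunc}--\eqref{eq:efunc-obst} with zero obstacle, whose optimal $C^{0,1-a}$ (resp.\ $C^{1,-a}$) regularity is by now classical (Caffarelli--Salsa--Silvestre and its extensions to general $|a|<1$). The new difficulty, and the main obstacle, is the growth bound at two-phase points $x_0\in\Gamma^+\cap\Gamma^-$, where the one-phase reduction is unavailable.

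At such a two-phase point I would develop an Almgren-type frequency formula tailored to $J_a$,
$$
N(r,u,x_0)=\frac{r\bigl(\int_{B_r^+(x_0)}|\nabla u|^2 x_n^a\d x+2\int_{B_r'(x_0)}(\lambda_+u^++\lambda_- u^-)\d x'\bigr)}{\int_{(\partial B_r)^+(x_0)}u^2 x_n^a\d \H^{n-1}},
$$
noting that both pieces of the numerator scale as $r^{n-a}$ at the critical homogeneity $1-a$, which is forced by the model boundary profile $x_n^{1-a}$ that already appeared in Proposition \ref{prop:noneg-one-phase}. Domain-variation computations, using the full minimality of $u$ rather than just the Euler--Lagrange equation (which is only an inequality across $\{u=0\}\cap D'$), should yield almost monotonicity of $N$ in $r$ together with $\liminf_{r\to 0^+}N(r,u,x_0)\geq 1-a$. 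An ACF-type monotonicity formula will be invoked to enforce phase separation at the critical scale and to rule out mixed blow-ups of homogeneity below $1-a$, while a Weiss-type energy will classify the extremal $(1-a)$-homogeneous blow-ups as $\pm c\,x_n^{1-a}$ (up to the ratio dictated by $\lambda_\pm$). Integrating the monotonicity of $N$ then converts into the required $r^{1-a}$ bound. The hardest technical step in this plan will be establishing the almost-monotonicity of $N$ in the two-phase setting, where the competing $\lambda_\pm u^\pm$ contributions create a jump in $\partial_{x_n}u$ across $\{u=0\}\cap D'$ that must be absorbed using the variational structure of the problem; once this is in place, the classification of blow-ups and the extraction of Hölder bounds from the growth estimate are standard.
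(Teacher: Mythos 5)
Your proposal takes a genuinely different route from the paper and, as outlined, has two gaps that would need to be filled.

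\textbf{Comparison with the paper's strategy.} The paper does not use an Almgren-type frequency formula for the full functional $J_a$. Instead, it first establishes suboptimal regularity ($C^{0,(1-a)/2}$ for all $a$, and $C^{1,\alpha}_{x'}$ along the thin space for $a<0$) by comparison with $a$-harmonic replacements. It then proves the optimal $r^{1-a}$ growth at \emph{every} thin point $x'\in B_{1/2}'$ (not only free boundary points) by a contradiction/compactness argument over dyadic scales: a violating sequence of minimizers $u_j$ yields, after normalization, a global $a$-harmonic function $v_0$ with subcritical polynomial growth, even in $x_n$, vanishing at $0$; the Liouville-type Lemma \ref{lem:liouville} then forces $v_0$ to be a low-degree polynomial, which is eliminated by the growth bound and, in the borderline case $a=0$, by the ACF formula applied to $u_j^\pm$. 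This avoids any monotonicity formula for $J_a$ in the growth estimate; the Weiss formula enters only later (Sections \ref{sec:blow-glob-solut}--\ref{sec:struct-free-bound}) for blow-up classification, and the paper's Almgren formula (Lemma \ref{lem:almgren}) is stated only for $a$-harmonic functions.

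\textbf{Gap 1: the Almgren-type monotonicity for $J_a$.} The central technical tool in your plan is an Almgren-type frequency $N(r,u,x_0)$ built from the full $J_a$ energy, together with its almost-monotonicity and the lower bound $\liminf_{r\to 0^+}N\geq 1-a$. Neither is established. For $a$-harmonic functions the frequency is monotone (Lemma \ref{lem:almgren}), but as soon as the $\lambda_\pm u^\pm$ terms enter, the domain-variation computation picks up uncontrolled boundary contributions on $\partial B_r$ from the thin integral $\int_{B_r'}(\lambda_+u^++\lambda_-u^-)$, and no truncation/normalization that restores monotonicity is given. Moreover, the lower bound $\liminf N\geq 1-a$ is precisely as strong as the growth estimate you are trying to prove; without an independent argument (compactness plus Liouville, as in the paper, or a truncated-frequency analysis as in other obstacle settings) the outline is circular. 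The paper's Weiss formula (Theorem \ref{thm:weiss}) is the monotonicity result that is actually available, and it has a different structure: it is signed, not a ratio, and by itself it does not produce a lower growth bound without nondegeneracy, which in the paper is proved \emph{after} the growth estimate.

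\textbf{Gap 2: uniformity across the one-phase/two-phase dichotomy.} You split into one-phase free boundary points (use Proposition \ref{prop:noneg-one-phase} plus known fractional obstacle regularity) and two-phase points (the frequency analysis). Even if both pieces worked, the constants would not be uniform: a one-phase point $x_0\in\Gamma^+\setminus\Gamma^-$ can lie arbitrarily close to a two-phase point, in which case the one-phase reduction is available only in a ball of radius at most $\dist(x_0,\Gamma^-)$, and the resulting $r^{1-a}$ bound degenerates. You also need control at interior points of $\{u=0\}\cap D'$ and of $\{u\neq 0\}\cap D'$, which the outline does not address. The paper's compactness argument quantifies over all $x'\in B_{1/2}'$ simultaneously and over the full class of uniformly bounded minimizers, which is what delivers the uniform constant $C=C(a,M,\lambda_\pm,n)$. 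If you want to retain your decomposition, you would at minimum need a quantitative transition estimate linking the two regimes, or better, run a single compactness argument as the paper does.

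In short: the intended dichotomy and the appeal to the fractional obstacle problem at one-phase points is a reasonable idea, and the use of Weiss and ACF is in the right spirit. But the asserted Almgren-type almost-monotonicity for $J_a$ is an unproved and nontrivial claim that the paper deliberately avoids by using a more elementary dyadic compactness plus Liouville argument, and the constant-uniformity issue across the phase dichotomy must be resolved. As written, the proposal does not constitute a proof.
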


Note that this regularity is sharp, because of the explicit example
$$
u(x)=\frac{\lambda_+}{(1-a)}x_n^{1-a},
$$
see Proposition~\ref{prop:noneg-one-phase}. 

As a by-product we also obtain Theorem \ref{thm:main1-EL}, which, by the Caffarelli-Silvestre extension, provides sharp regularity estimates for solutions of
$$(-\lap)^s u=f\in L^\infty,$$
if $s=(1-a)/2 \neq 1/2.$

Our second result is about the structure of the free boundary, which
effectively reduces the problem to the (one-phase) fractional obstacle problem in the case $a\geq 0$.

\begin{thm}[Structure of the free boundary]\label{thm:main-free-bound} Let $a \geq 0$ and let $u$ be a minimizer of \eqref{eq:jfunc}. Then
$$
\Gamma^+\cap \Gamma^-=\emptyset.
$$
Thus, by Proposition~\ref{prop:noneg-one-phase}, the study of the free boundary is reduced to the fractional
obstacle problem. 
\end{thm}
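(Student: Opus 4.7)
The plan is to argue by contradiction. Suppose some point $z_0$ lies in $\Gamma^+ \cap \Gamma^-$; after a translation assume $z_0 = 0$, and work in a small ball $B_1 \Subset D$. I would extend $u$ evenly across $\{x_n = 0\}$ by setting $\bar u(x', x_n) := u(x', |x_n|)$. A short computation from the Euler--Lagrange equations for $J_a$, together with the flux jump introduced by the even reflection, shows that $\bar u$ satisfies distributionally
$$L_a \bar u = 2(\lambda_+ \chi_{\Omega^+} - \lambda_- \chi_{\Omega^-})\H^{n-1}|_{\{x_n=0\}}, \qquad L_a w := \div(|x_n|^a \nabla w).$$
By Theorem~\ref{thm:main-opt-reg}, $\bar u$ is continuous, so $\bar u^+$ and $\bar u^-$ have disjoint open supports and both vanish at $0$. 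I would first verify that both are $L_a$-subsolutions on $B_1$: inside their supports they are $L_a$-harmonic off $\{x_n = 0\}$; the distributional boundary source on $\{u>0\}$ (resp.\ $\{u<0\}$) equals $+2\lambda_+ \ge 0$ (resp.\ $+2\lambda_-$); and the flux jump across $\partial\{\bar u^\pm > 0\}$ contributes a non-negative measure since $\bar u^\pm$ vanishes on that boundary.

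With these subharmonicity properties in hand, the heart of the argument is an Alt--Caffarelli--Friedman-type monotonicity formula for the pair $(\bar u^+, \bar u^-)$, adapted to the weighted operator $L_a$. In the classical case $a = 0$ this is the standard ACF formula; for $a > 0$ the natural object is a product of weighted Dirichlet integrals of the form
$$\Phi(r) = \frac{1}{r^{\kappa(a)}}\left(\int_{B_r}\frac{|\nabla \bar u^+|^2\,|x_n|^a}{|x|^{n+a-2}}\,dx\right)\left(\int_{B_r}\frac{|\nabla \bar u^-|^2\,|x_n|^a}{|x|^{n+a-2}}\,dx\right),$$
with the normalising exponent $\kappa(a)$ tuned to make $\Phi$ scale-critical at the optimal H\"older exponent $1-a$. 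Its monotonicity reduces, via separation of variables on $\partial B_r$ and the $L_a$-subharmonicity of $\bar u^\pm$, to a Friedland--Hayman-type inequality for the first eigenvalues of complementary Dirichlet problems on $S^{n-1}$ with spherical weight $|\theta_n|^a$. Combined with the optimal growth $|\bar u(x)| \le C|x|^{1-a}$ from Theorem~\ref{thm:main-opt-reg}, $\Phi$ is uniformly bounded on $(0, r_0)$.

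The contradiction then comes from a matching non-degeneracy estimate at $0$: a blow-up argument, in which the rescalings $r^{a-1}\bar u(r\,\cdot)$ converge along subsequences to homogeneous $L_a$-subsolutions of degree $1 - a$ with disjoint non-trivial supports, yields $\sup_{B_r}\bar u^\pm \ge c\,r^{1-a}$ and hence a lower bound on $\Phi(r)$ that is inconsistent with the upper bound when $a \geq 0$ (and in fact forces $\Phi(r) \to \infty$ for $a > 0$, while the borderline case $a = 0$ must be closed by inspecting the equality case in the weighted Friedland--Hayman inequality and ruling out the corresponding half-space profile). The sign condition $a \ge 0$ enters decisively both in the weighted ACF monotonicity and in the exclusion of the limiting homogeneous profile. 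The main obstacle will be establishing the weighted ACF formula together with the matching non-degeneracy: the first requires the correct weighted Friedland--Hayman inequality on spherical caps, and the second a careful classification of homogeneous $L_a$-subsolutions supported on complementary cones, both of which are sensitive to the sign of $a$.
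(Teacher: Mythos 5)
Your approach is genuinely different from the paper's, but it has a serious gap at its core. The paper proves Theorem~\ref{thm:main-free-bound} by contradiction: given a sequence of minimizers with approaching positive and negative phases, it rescales, uses optimal growth (Theorem~\ref{thm:main1-growth}), nondegeneracy (Theorem~\ref{thm:nondegeneracy}), the Weiss monotonicity formula (Theorem~\ref{thm:weiss}), and then the classification Lemma~\ref{lem:global}/\ref{lem:blowup} to identify the limit as $c|x_n|^{1-a}$, which is identically zero on the thin space and hence contradicts the nondegeneracy of \emph{both} phases. Crucially, the paper invokes the Alt--Caffarelli--Friedman formula \emph{only} for $a=0$ (Lemma~\ref{lem:ACF}) and \emph{only} in the proof of the optimal growth Theorem~\ref{thm:main1-growth}, never in the proof of the structure theorem itself; for $a>0$ it deliberately routes around ACF, relying on the degree-$(1-a)$ Liouville argument and Weiss monotonicity instead.

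The gap in your proposal is that the weighted ACF formula you posit, with weight $|x_n|^a$ and the renormalization $\kappa(a)$ ``tuned'' to homogeneity $1-a$, is not established in the paper and is not an off-the-shelf result. Its monotonicity would rest on a weighted Friedland--Hayman eigenvalue inequality on $S^{n-1}$ with the spherical weight $|\theta_n|^a$, which you name as ``the main obstacle'' but do not supply. Without that, your upper bound on $\Phi(r)$ has no monotonicity to pair against. Moreover, even granting the formula, your own sketch concedes that the borderline case $a=0$ ``must be closed by inspecting the equality case \ldots\ and ruling out the corresponding half-space profile.'' That inspection is exactly the classification of homogeneous degree-$(1-a)$ global minimizers (Lemma~\ref{lem:blowup}) that the paper proves directly; so your ACF detour does not actually avoid the classification argument, it just defers it to the equality case analysis. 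Finally, the claimed lower bound on $\Phi(r)$ from $\sup_{B_r}\bar u^\pm \ge c\,r^{1-a}$ requires passing from a supremum bound to a weighted Dirichlet-integral bound (with the singular kernel $|x|^{-(n+a-2)}$), which is also not immediate and is not argued.

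By contrast, the paper's route buys a single, self-contained Weiss formula valid for all $a\in(-1,1)$, whose constancy characterizes $(1-a)$-homogeneous minimizers, plus a clean classification of those (via Corollary~\ref{cor:weisscor} and an integration-by-parts identity). This sidesteps the two hard weighted spherical-eigenvalue problems your approach would have to settle. If you want to pursue your strategy, the essential missing pieces are: (i) a proof (or citation) of the weighted ACF monotonicity for $|x_n|^a$, including the correct scaling exponent; (ii) the weighted Friedland--Hayman inequality with its equality case; and (iii) the bridge from the non-degeneracy supremum bound to a lower bound on the weighted Dirichlet integrals.
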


\subsection{Structure of the paper}
\begin{enumerate}[$\bullet$]
\item In Section~\ref{sec:notation}, we describe the notation used
throughout the paper. 
\item Section~\ref{sec:motivation} contains a couple
of applications that motivated our study. 
\item In
Section~\ref{sec:preliminaries} we collected some results on
$a$-harmonic functions: interior estimates, Liouville-type
theorem and Almgren's frequency formula,  as well as some preliminary
results on our problem: energy inequality, local boundedness,
Weiss-type monotonicity formula (the proof of the latter is
given in Appendix). It also contains the
Alt-Caffarelli-Friedman monotonicity formula to be used in the
case $a=0$.
\item In Section~\ref{sec:regul-minim} we prove some preliminary
  regularity on the minimizers: namely $C^{0,(1-a)/2}$, when $a\geq 0$, and
  $C_{x'}^{1,\alpha}$ for $\alpha<-a$, when $a<0$.
\item Section~\ref{sec:optim-growth-regul} contains the proof of
  Theorem~\ref{thm:main-opt-reg}. We first show the optimal growth
  rate of the minimizers from the free boundary
  (Theorem~\ref{thm:main1-growth}) which then implies the optimal
  regularity (Theorem~\ref{thm:main1-expanded}). 
\item In Section~\ref{sec:nondegeneracy} we prove the nondegeneracy of
  each phase near their respective free boundaries.
\item In Section~\ref{sec:blow-glob-solut} we study the blowups, by
showing first that they are homogeneous of degree $(1-a)$ and then
classifying all such minimizers for $a\geq 0$.
\item Finally, in Section~\ref{sec:struct-free-bound} we prove
  Theorem~\ref{thm:main-free-bound}. In fact, we prove its stronger
  version (Theorem~\ref{thm:main2-expanded}) that the free boundaries $\Gamma^+$ and $\Gamma^-$ are
  uniformly separated when $a\geq 0$. We conclude the paper with an example of a
  solution when $a<0$ where $\Gamma^+$ and $\Gamma^-$ have a common
  part with vanishing thin gradient $|\nabla_{x'}u|$.
\end{enumerate}

\section{Notation}\label{sec:notation}
Throughout the paper we will be using the following notation:
\begin{enumerate}[--]
\item  We denote a point $x \in \R^n$ by $(x',x_n)$ where $x'=(x_1,
  \ldots , x_{n-1})\in\R^{n-1}$. Moreover, in certain cases we will identify 
  $x'\in\R^{n-1}$ with $(x',0)\in\R^{n-1}\times\{0\}\subset \R^n$. We
  will refer to $\R^{n-1}=\R^{n-1}\times\{0\}$ as the \emph{thin space}.

\item $\nabla u=(\dd_{x_1}u,\ldots,\dd_{x_{n-1}}u,\dd_{x_n}u)$: the
  full gradient.

\item $\nabla_{x'} u=(\dd_{x_1}u,\ldots,\dd_{x_{n-1}}u)$: the gradient
  in the thin space, or the thin gradient.
\item $u_{x_i}$: alternative notation for  the partial derivative $\partial_{x_i} u$, $i=1,\ldots, n$. 

\item  For a set $E \subset \R^n$ we define
\[
E':= E \cap (\R^{n-1} \times \{0\})
\]
\item For $E\subset \R^{n-1}=\R^{n-1}\times\{0\}$, $\partial'E$ is the
  boundary in the topology of $\R^{n-1}$.

\item For a function $u$, we define $u^+ = \max\{u,0\}$ and $u^-= \max\{-u,0\}$, the positive and negative parts of $u$ so that, $u = u^+ - u^-$.

\item We call $\Gamma^+$ and $\Gamma^-$  the free boundaries where
\[
\Gamma^+ = \partial' \{u(\ \cdot \ ,0) > 0\} \text{ and } \Gamma^- = \partial' \{u(\ \cdot \ , 0) < 0\} 
\]

\item We will for $x_0\in \Gamma^+\cup\Gamma^-$, denote by $u_{r,x_0}$, the rescaled function
$$
\frac{u(rx+x_0)}{r^{1-a}}.
$$
If there is a subsequence $u_{r_j,x_0}$, where $r_j\to 0$, converging locally uniformly to a function $u_{0,x_0}$ we will say that $u_{0,x_0}$ is a \emph{blow-up} of $u$ at the point $x_0$.

\item $L^2(D^+,x_n^a)$ \big[respectively $L^2(D, |x_n|^a)$\big], for $|a|<1$, is the
  weighted Lebesgue space with the square of the norm given by 
$$
\int_{D^+} |f(x)|^2x_n^a \d x \quad\left[\text{respectively } \int_{D} |f(x)|^2|x_n|^a \d x\right].
$$

\item $W^{1,2}(D^+,x_n^a)$ \big[respectively  $W^{1,2}(D,|x_n|^a)$\big], for $|a|<1$, is the weighted Sobolev
  space of functions $u\in L^2(D^+, x_n^a)$ \big[respectively $L^2(D, |x_n|^a)$\big] such that the distributional derivatives $\partial_{x_i} u$, $i=1,\ldots, n$, are
  also in $L^2(D^+, x_n^a)$  \big[respectively $L^2(D,
  |x_n|^a)$\big]. For some of the basic results for these spaces such
  as the Sobolev embedding and Poincar\'e inequalities, we refer to
  \cite{FKS82}. See also \cite{Kil}.

It is known that if there is a bi-Lipschitz transformation that will take
$(D, D')$ to $(B_1, B_1')$ then functions in $W^{1,2}(D^+,x_n^a)$ \big[or
$W^{1,2}(D,|x_n|^a)$\big] have traces in $L^2(D')$ and $L^2((\partial
D)^+, x_n^a)$ \big[or $L^2(\partial
D, |x_n|^a)$\big]. Furthermore, the corresponding trace operators are
compact. See \cite{Nek} for the first trace operator. Compactness of
the second one follows from the approximation by Lipschitz
functions as indicated in Remark~6 in \cite{Kil}, similarly to the proof of Theorem~8 there.

\item We say that $u\in W^{1,2}(D,|x_n|^a)$ is \emph{$a$-harmonic} in an open set $D\subset \R^n$ if it is a weak solution of 
$$
L_a u:=\div (|x_n|^a \nabla u )=0.
$$
Equivalently, $a$-harmonic functions are the minimizers of the
weighted Dirichlet integral
$$
E_a(u):=\int_{D}|\nabla u|^2|x_n|^a\d x,
$$
among all functions with the same trace on $\partial D$.

\item For $u\in W^{1,2}(D,|x_n|^a)$, by an \emph{$a$-harmonic replacement} of
   $u$ on $U\Subset D$ we understand a function $v$ coinciding
  with $u$ on $D\setminus U$, $a$-harmonic in $U$, with the same boundary values on $\partial U$ as $u$
  in the sense $v-u\in W^{1,2}_0(U,|x_n|^a)$.

\item Very often we will use the following convention for
  integrals. If the measure of integration is not indicated then it is
  either the standard Lebesgue measure $\d x$ in $\R^n$, or
  $(n-1)$-dimensional Hausdorff measure $\d \H^{n-1}$ (also denoted
  $\d x'$ when restricted to $\R^{n-1}$). 

\end{enumerate}

\section{Motivation}\label{sec:motivation}
In this section we describe some applications that motivated us to
study the problem  \eqref{eq:jfunc}--\eqref{eq:jfunc-bdry}, in
addition to those indicated in the introduction.

\subsection{Temperature control problems}
One of the applications of the two-phase obstacle problem
\eqref{eq:2phase-obst} is the
temperature control problem through the interior, see Duvaut-Lions \cite[Chapter I, \S2.3.2]{dl76}. More specifically,
suppose that we want to maintain the temperature $u(x)$ in the domain $D$
close to the range $(\theta_-,\theta_+)$, where $\theta_-(x)\leq
\theta_+(x)$ are two given functions in $D$. For that purpose we have
an array of cooling/heating devices that are distributed evenly
over the domain $D$. The devices are assumed to be of limited power,
with their generated heat flux $-f$ in the range
$[-\lambda_-,\lambda_+]$, $\lambda_\pm>0$. The device will generate no heat if $u(x)\in
[\theta_-(x),\theta_+(x)]$, however, outside that range
the corrective heat flux $F$ will be injected according to the following rule:
\begin{align*}
F=F(u)=\begin{cases}
\min\{k_+(u-\theta_+),\lambda_+\},& u>\theta_+,\\
\max\{k_-(u-\theta_-),-\lambda_-\}, & u<\theta_-,
\end{cases}
\end{align*}
where $k_\pm\geq 0$. In the equilibrium state, the temperature distribution will satisfy
$$
\Delta u=F(u)\quad\text{in }D.
$$
Assuming $\theta_\pm=0$ and $k_\pm=+\infty$, the equation becomes
$$
\Delta u=\lambda_+\chi_{\{u>0\}}-\lambda_-\chi_{\{u<0\}}\quad\text{in }D,
$$
which is nothing but the two-phase obstacle problem.

Suppose now that we want to regulate the temperature in $D^+=D\cap\{x_n>0\}$ with the heating/cooling devices that are evenly distributed along the ``wall'' $D'=D\cap\{x_n=0\}$. Then in the equilibrium position, the temperature will satisfy
$$
\Delta u=0\quad\text{in }D^+,\quad \partial_{x_n} u=F(u)\quad\text{on }D'.
$$
In the limiting case $\theta_\pm=0$ and $k_\pm=\infty$ this transforms
into the variational inequality 
$$
\Delta u=0\quad\text{in }D,\quad \partial_{x_n} u\in\bar F(u)\quad\text{on }D',
$$
where $\bar F$ is the maximal monotone graph
\begin{equation}\label{eq:max-mon}
\bar F(u)=\begin{cases}
\{\lambda_+\},& u>0,\\
[-\lambda_-,\lambda_+],& u=0,\\
\{-\lambda_-\}, & u<0.
\end{cases}
\end{equation}
Equivalently, the solutions of this variational inequality can be obtained by minimizing the energy functional
$$
J_{0}(u)=\int_{D^+} |\nabla u|^2\d x+2\int_{D'} (\lambda_+ u^++\lambda_-
u^-) \d x',
$$
subject to a Dirichlet boundary condition on $(\partial D)^+$ (cf.\ Lemma~\ref{lem:normalbound}). This is a particular case ($a=0$) of the problem that we intend to
study in this paper. 

\subsection{Two-phase problem for fractional Laplacian}
Suppose now that $\Sigma$ is a bounded open set in $\R^{n-1}$,  $0<s<1$, and we
consider the minimizers of the energy functional
$$
j_s(v)=c_{n,s}\int_{\R^{n-1}}\int_{\R^{n-1}}\frac{(v(x')-v(y'))^2}{|x'-y'|^{n-1+2s}}+2\int_{\R^{n-1}}
(\lambda_+ u^++\lambda_-u^-),
$$
among all function such that $v=g$ on $\R^{n-1}\setminus \Sigma$.
Here $c_{n,s}>0$ is a normalization constant.
Then it is easy to see that the minimizers will satisfy the
variational inequality
\begin{equation}\label{eq:frac-var-ineq}
-(-\Delta_{x'})^s v\in \bar F(v)\quad\text{on }\Sigma,
\end{equation}
with the maximal monotone graph $\bar F$ in \eqref{eq:max-mon}.
To see the connection with the two-phase thin obstacle problem, we use
the Caffarelli-Silvestre extension \cite{CS07} for $v$. Namely, let $u$ be a
function $\R^n_+$ that satisfies
\begin{align*}
L_a u:=\div(x_n^a \nabla u)=0&\quad\text{in }\R^{n}_+\\
u(x',0)=v(x')&\quad\text{for }x'\in\R^{n-1},
\end{align*}
where $a=1-2s\in(-1,1)$. (The initial condition is understood in the
boundary trace sense.)
Then we can recover the fractional Laplacian of $v$ on $\R^{n-1}$ by
$$
(-\Delta_{x'})^sv(x')=C_{n,s}\lim_{x_n\to 0+}x_n^a\partial_{x_n} u(x',x_n),
$$
in the weak sense. Suppose now that there is a smooth domain $D$ in $\R^n$ is such that
$\Sigma=D'=D\cap\{x_n=0\}$. Then, using this extension we see that the
same $u$ can be recovered by minimizing the energy functional $J_a$
among all function $u$ with fixed Dirichlet condition on $(\partial
D)^+$, which is exactly the problem we stated in the
beginning of the introduction.

\section{Preliminaries}\label{sec:preliminaries}

In this section we present certain results
about the minimizers of \eqref{eq:jfunc} and $a$-harmonic functions
that we will utilize later on.

We start with a remark that the results that we are interested in are
local in nature and therefore without loss of generality we may assume
that $D=B_1$ in \eqref{eq:jfunc}. Moreover, it will also be convenient
to extend the minimizers $u$ from $B_1^+$ to the full ball $B_1$ by
even symmetry
$$
u(x',-x_n)=u(x',x_n).
$$
Then such $u$ is in $W^{1,2}(B_1,|x_n|^a)$ and is the minimizer of the functional
\begin{equation}\label{eq:jfunc-B1}
J_a(u)=\frac12\int_{B_1}|\nabla u|^2 |x_n|^a\d x+2\int_{B_1'}(\lambda_+
u^++\lambda_- u^-)\d x'
\end{equation}
subject to the boundary condition
\begin{equation}\label{eq:jfunc-2-B1}
u=g\quad\text{on }\partial B_1,
\end{equation}
in the sense of traces, where $g\in L^2(\partial B_1,|x_n|^a)$ is also evenly
extended to $\partial B_1$ from $(\partial B_1)^+$.

The existence of minimizers of
\eqref{eq:jfunc-B1}--\eqref{eq:jfunc-2-B1}, follows by the direct
method in the calculus of variation, since $J_a$ is continuous on
$W^{1,2}(B_1,|x_n|^a)$. The uniqueness follows from the strict
convexity of $J_a$ on the closed convex subset of functions in
$W^{1,2}(B_1,|x_n|^a)$ satisfying \eqref{eq:jfunc-2-B1}. The next
lemma identifies the variational inequality satisfied by the minimizers.

\begin{lem}[Variational inequality]\label{lem:normalbound} Let $u$ be a minimizer of \eqref{eq:jfunc-B1}. Then for
  $x'\in B_1'$ we have
\begin{equation}\label{eq:normallim}
-\lim_{x_n\to 0^+} x_n^a u_{x_n}(x',x_n)\in \bar
F(u)=\begin{cases}\{\lambda_+\}& u>0\\ [-\lambda_-,\lambda_+], &
  u=0\\\{-\lambda_-\}, & u<0
\end{cases}
\end{equation}
in the sense that there exists a measurable function $f$ in $B_1'$
such that 
\begin{align}\nonumber
f(x')\in \bar F (u(x',0))&\quad\text{for a.e. }x'\in B_1',\text{
  and}\\
\label{eq:normallim-weak}
-\int_{B_1^+}|x_n|^a \nabla u \nabla \psi\d x=\int_{B_1'} f\psi\d x'&\quad\text{for any }\psi\in C^\infty_0(B_1).
\end{align}
\end{lem}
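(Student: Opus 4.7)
Since $J_a$ is convex and $u$ is its minimizer, my plan is to derive the identity by taking one-sided first variations. The boundary integrand $g(s):=\lambda_+ s^++\lambda_- s^-$ is convex but only one-sidedly differentiable at $s=0$, so minimality will produce a pair of variational inequalities rather than a classical Euler--Lagrange equation. The function $f$ is then extracted via Riesz representation of a linear functional that those inequalities force to be bounded on $L^1(B_1')$.

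\textbf{Step 1 (First variation).} Fix $\psi\in C^\infty_0(B_1)$; by decomposing $\psi$ into its even and odd parts in $x_n$ one may assume $\psi$ is even, since the odd part contributes zero to both sides of the desired identity. For $t>0$, minimality gives $J_a(u+t\psi)\geq J_a(u)$. The difference quotients $t^{-1}[g(u(x',0)+t\psi(x',0))-g(u(x',0))]$ are bounded by $\max(\lambda_+,\lambda_-)|\psi(x',0)|$ and, as $t\to 0^+$, converge pointwise to the one-sided derivative
\[
g'_+(s;\tau)=\begin{cases}\lambda_+\tau & (s>0),\\ -\lambda_-\tau & (s<0),\\ \lambda_+\tau^++\lambda_-\tau^- & (s=0).\end{cases}
\]
Dominated convergence then yields
\[
\int_{B_1^+}|x_n|^a\nabla u\cdot\nabla\psi\d x+\int_{B_1'}g'_+(u(x',0);\psi(x'))\d x'\geq 0.
\]

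\textbf{Step 2 (Riesz representation).} Applying the inequality of Step~1 with $-\psi$ in place of $\psi$ gives the reverse one-sided bound, and combining the two yields
\[
\Big|\int_{B_1^+}|x_n|^a\nabla u\cdot\nabla\psi\d x\Big|\leq \max(\lambda_+,\lambda_-)\int_{B_1'}|\psi|\d x'.
\]
Specialized to $\psi\in C^\infty_0(B_1\setminus B_1')$, this shows the left-hand side depends only on the trace $\psi|_{B_1'}$, so the map $\psi\mapsto -\int_{B_1^+}|x_n|^a\nabla u\cdot\nabla\psi\d x$ descends to a bounded linear functional on the dense subspace of $L^1(B_1')$ consisting of traces of $C^\infty_0(B_1)$ functions. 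The Riesz representation theorem then delivers $f\in L^\infty(B_1')$ satisfying \eqref{eq:normallim-weak}.

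\textbf{Step 3 (Pointwise inclusion).} Substituting the representation of Step~2 back into the two variational inequalities gives, for every $\psi\in C^\infty_0(B_1')$,
\[
-\int_{B_1'}g'_+(u;-\psi)\d x'\leq \int_{B_1'}f\psi\d x'\leq \int_{B_1'}g'_+(u;\psi)\d x'.
\]
Testing with nonnegative bumps $\phi$ concentrated near a Lebesgue point $x_0'$ of $u(\cdot,0)$, and taking $\psi=\phi$ and $\psi=-\phi$ separately, Lebesgue differentiation yields $f(x_0')=\lambda_+$ when $u(x_0',0)>0$, $f(x_0')=-\lambda_-$ when $u(x_0',0)<0$, and the two-sided bound $-\lambda_-\leq f(x_0')\leq \lambda_+$ when $u(x_0',0)=0$, establishing $f\in \bar F(u)$ a.e. I expect this last step to be the main technical point: $\{u(\cdot,0)=0\}$ is only measurable in general, so the selection of $f$ there has to be read off from the distributional behavior of $L_a u$ via the two-sided inequality above rather than from $u$ pointwise, and it is the separate testing against nonnegative and nonpositive bumps that turns the one-sided information $g'_+$ into the symmetric interval $[-\lambda_-,\lambda_+]$.
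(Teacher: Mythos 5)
Your proof takes essentially the same route as the paper's: a one-sided first variation $J_a(u+t\psi)\geq J_a(u)$ for $t>0$, using convexity of the boundary integrand to pass to the limit in the difference quotients (the paper invokes monotone convergence, noting the difference quotients of a convex function are monotone in $t$, where you use dominated convergence — a cosmetic difference), which produces exactly the paper's two-sided inequality $\int_{B_1'}\Psi'(u-)\psi\leq -\int_{B_1^+}|x_n|^a\nabla u\cdot\nabla\psi\leq\int_{B_1'}\Psi'(u+)\psi$ for nonnegative $\psi$. Your Steps 2 and 3 spell out the extraction of $f\in L^\infty(B_1')$ via Riesz representation on $L^1$ and its pointwise inclusion in $\bar F(u)$ via Lebesgue differentiation, details the paper leaves implicit after establishing the two-sided bound, but the core argument is the same.
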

\begin{rem} Taking $\psi(x',x_n)=\gamma(x')\eta(x_n)$  in
  \eqref{eq:normallim-weak}  with $\gamma\in
  C^\infty_0(B_1')$ and $\eta\in C^\infty_0([0,1))$, $\eta\equiv 1$
  near $0$,
 we obtain that the convergence in
 \eqref{eq:normallim} can be understood also in the sense of distributions
\begin{equation}
\lim_{x_n\to 0+}\int_{B_1'} x_n^a u_{x_n}(x',x_n)\gamma(x')\d
x'=\int_{B_1'}f\gamma \d x'.
\end{equation}
\end{rem}

\begin{proof} In fact, we will prove something more general: if $u$ is
  a minimizer of the functional 
$$
J(u)=\int_{B_1^+}|\nabla u|^2x_n^a+2\int_{B_1'}\Psi(u),
$$
with a fixed Dirichlet data on $(\partial B_1)^+$,
where $\Psi(s)$ is a convex function of $s\in\R$, then
$$
\int_{B_1'}
\Psi'(u-)\psi\leq -\int_{B_1^+}|x_n|^a \nabla u \nabla \psi
\leq \int_{B_1'}
\Psi'(u+)\psi
$$
for any nonnegative $\psi\in C^\infty_0(B_1)$. Indeed, for such $\psi$
consider a competing function $u+\epsilon\psi$. From the minimality of
$u$ we then have
\begin{align*}
0\leq \frac{J(u+\epsilon\psi)-J(u)}{\epsilon}&=2\int_{B_1^+} x_n^a\nabla
u\nabla \psi+\epsilon\int_{B_1^+}|\nabla \psi|^2x_n^a\\
&\qquad+\int_{B_1'}\frac{\Psi(u+\epsilon\psi)-\Psi(u)}{\epsilon}.
\end{align*}
Now, noting that $(\Psi(u+\epsilon\psi)-\Psi(u))/\epsilon$ is
monotone in $\epsilon>0$ and converges to $\Psi'(u+)\psi$, by the
monotone convergence theorem we will arrive at
$$
 -\int_{B_1^+}|x_n|^a \nabla u \nabla \psi
\leq \int_{B_1'}\Psi'(u+)\psi.
$$
The inequality from below is proved similarly.
\end{proof}

The minimizers of \eqref{eq:jfunc-B1} also enjoy the following
comparison principle. 

\begin{lem}[Comparison principle] \label{lem:comparisonprinciple}
Let $u,v$ be two minimizers of the functional \eqref{eq:jfunc-B1} with $u|_{\partial B_1} \leq v|_{\partial B_1}$. Then $u \leq v$ in $B_1$. 
\end{lem}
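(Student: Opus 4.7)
The plan is the standard lattice/truncation argument for convex variational problems. I would set $w_1 := \min(u,v)$ and $w_2 := \max(u,v)$; both belong to $W^{1,2}(B_1,|x_n|^a)$ because the lattice operations are stable in this weighted Sobolev space, and the hypothesis $u|_{\partial B_1} \leq v|_{\partial B_1}$ (in the trace sense) forces $w_1 = u$ and $w_2 = v$ on $\partial B_1$. Hence $w_1$ is an admissible competitor for $u$ and $w_2$ for $v$, which gives by minimality
\[
J_a(u) \leq J_a(w_1), \qquad J_a(v) \leq J_a(w_2).
\]

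The crucial observation is the identity $J_a(u) + J_a(v) = J_a(w_1) + J_a(w_2)$. For the weighted Dirichlet part this rests on the a.e.\ pointwise identity $|\nabla u|^2 + |\nabla v|^2 = |\nabla w_1|^2 + |\nabla w_2|^2$: on the set $\{u \leq v\}$ one has $(\nabla w_1, \nabla w_2) = (\nabla u, \nabla v)$, while on its complement $(\nabla w_1, \nabla w_2) = (\nabla v, \nabla u)$. The thin boundary term is preserved for the same lattice reason: at each point of $B_1'$ the unordered pair $\{u(x',0), v(x',0)\}$ coincides with $\{w_1(x',0), w_2(x',0)\}$, so
\[
\lambda_+(u^+ + v^+) + \lambda_-(u^- + v^-) = \lambda_+(w_1^+ + w_2^+) + \lambda_-(w_1^- + w_2^-).
\]

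Adding the two minimality inequalities and combining with this identity forces equalities throughout, in particular $J_a(u) = J_a(w_1)$. Since $w_1$ shares boundary values with $u$ on $\partial B_1$ and both are minimizers within that Dirichlet class, the uniqueness of minimizers, established just before the lemma from the strict convexity of $J_a$ (strict convexity of the quadratic Dirichlet integral combined with the fixed trace), forces $u \equiv w_1 = \min(u,v)$, which is precisely $u \leq v$ in $B_1$. The only mildly delicate ingredient is the stability of the trace on $\partial B_1$ under the operations $\min$ and $\max$ in $W^{1,2}(B_1,|x_n|^a)$, which follows from the Lipschitz composition property of the trace operator referenced in the preliminaries. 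The argument is entirely parallel to the comparison principle for the classical two-phase obstacle problem, with the weight $|x_n|^a$ playing no new role beyond being an admissible Muckenhoupt $A_2$ weight.
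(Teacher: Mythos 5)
Your proof is correct and follows essentially the same route as the paper: define $\min(u,v)$ and $\max(u,v)$, observe the identity $J_a(\min)+J_a(\max)=J_a(u)+J_a(v)$, match boundary traces using $u|_{\partial B_1}\leq v|_{\partial B_1}$, and invoke uniqueness of minimizers from strict convexity to conclude $u=\min(u,v)$. You simply spell out the pointwise gradient identity and the unordered-pair observation that the paper leaves as ``straightforward to verify,'' but the argument is the same.
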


\begin{proof}
If we define $\overline w:= \max \{u,v\}$ and $\underline w:= \min \{u,v\}$, it is straightforward to verify that 
\[
J(\overline w) + J(\underline w) = J(u) + J(v)
\]
Since the functional is strictly convex on functions with the same
boundary data, minimizers are unique. Since $\overline w |_{\partial
  B_1} = v|_{\partial B_1}$ and $\underline w |_{\partial B_1} =
u|_{\partial B_1}$, we may therefore conclude that $u=\underline w$,
$v=\overline w$, readily implying that  $u \leq v$ in $B_1$.  
\end{proof}

\begin{cor} \label{cor:nondegeneracy}
If the boundary data are symmetric about the line $(0,\dots ,0,x_n)$, then the minimizer is symmetric about the line $(0,\dots, 0 ,x_n)$.
\end{cor}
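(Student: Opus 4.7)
The plan is to prove the corollary by a standard symmetrization argument, invoking either the uniqueness of minimizers or the comparison principle of Lemma~\ref{lem:comparisonprinciple} directly. The key observation is that the functional $J_a$ in \eqref{eq:jfunc-B1} is invariant under any isometry of $\R^n$ that fixes the $x_n$-axis pointwise.

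More precisely, let $R\colon\R^n\to\R^n$ be any rotation or reflection with $R(0,\ldots,0,t)=(0,\ldots,0,t)$ for all $t\in\R$; equivalently, $R$ acts as an orthogonal transformation on the first $n-1$ coordinates and trivially on $x_n$. First I would check the three invariance properties: (i) $R(B_1)=B_1$, so the domain is preserved; (ii) since $R$ fixes $x_n$, the weight $|x_n|^a$ satisfies $|x_n\circ R|^a=|x_n|^a$; (iii) the $(n-1)$-dimensional Hausdorff measure on $B_1'$ is preserved by $R$ because $R$ restricts to an orthogonal transformation on $\R^{n-1}$. Consequently, setting $v(x):=u(R^{-1}x)$, the change of variables formula yields $J_a(v)=J_a(u)$; moreover the trace of $v$ on $\partial B_1$ is $g\circ R^{-1}=g$ by the symmetry hypothesis on the boundary data.

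Thus $v$ is admissible for the same boundary value problem and attains the same energy as $u$. Since $J_a$ is strictly convex on the affine subspace of functions in $W^{1,2}(B_1,|x_n|^a)$ with trace $g$, the minimizer is unique and we must have $u=v$, i.e.\ $u\circ R^{-1}=u$. Alternatively, without appealing to strict convexity, one may directly apply the comparison principle (Lemma~\ref{lem:comparisonprinciple}) to conclude $u\leq v$ and $v\leq u$, whence $u=v$. Letting $R$ range over all such rotations/reflections shows that $u$ depends on $x'$ only through $|x'|$, which is precisely the statement of symmetry about the $x_n$-axis.

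There is no substantial obstacle here. The only thing to verify carefully is the invariance of the weighted Dirichlet integral under $R$, which follows because $\nabla(u\circ R^{-1})=(R^{-T}\nabla u)\circ R^{-1}$ has the same Euclidean norm as $\nabla u\circ R^{-1}$, and because $R$ preserves both the Lebesgue measure on $B_1$ and the surface measure on $B_1'$. The argument is entirely formal once these invariances are noted, and it works uniformly in $a\in(-1,1)$.
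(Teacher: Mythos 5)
Your proof is correct and follows essentially the same approach as the paper's: observe that $J_a$ is invariant under rotations fixing the $x_n$-axis, so any rotation of the minimizer is again a minimizer with the same boundary data, and conclude by uniqueness. You simply spell out the invariance computation that the paper leaves implicit.
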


\begin{proof} Any rotation will be a minimizer, and minimizers are unique.
\end{proof}

\begin{lem}\label{lem:upm-a-subharm} Let $u$ be a minimizer of \eqref{eq:jfunc-B1}. Then
  $u^\pm$ are $a$-subharmonic functions, i.e., $L_a
  u^\pm\geq 0$ in the weak sense
$$
\int_{B_1}\nabla (u^\pm)\nabla \psi |x_n|^a \leq 0
$$
for any nonnegative $\psi\in W^{1,2}_0(B_1,|x_n|^a)$.
\end{lem}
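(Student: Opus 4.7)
The plan is to test the Euler--Lagrange identity from Lemma~\ref{lem:normalbound} against a smooth approximation of $\chi_{\{u>0\}}\psi$. Lemma~\ref{lem:normalbound} provides a measurable selection $f(x')\in\bar F(u(x',0))$ such that
$$
-\int_{B_1^+}|x_n|^a\nabla u\cdot\nabla\varphi\,dx=\int_{B_1'}f\varphi\,dx'\quad\text{for all }\varphi\in C^\infty_0(B_1).
$$
A standard density argument, using the trace results cited in Section~\ref{sec:notation}, extends this identity to all $\varphi\in W^{1,2}_0(B_1,|x_n|^a)\cap L^\infty(B_1)$.

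Next, fix a nonnegative $\psi\in C^\infty_0(B_1)$ and choose smooth nondecreasing cutoffs $\eta_\e\colon\R\to[0,1]$ with $\eta_\e(s)=0$ for $s\leq 0$, $\eta_\e(s)=1$ for $s\geq\e$, and $\eta_\e'\geq 0$. Since $u\in W^{1,2}(B_1,|x_n|^a)$ and $\eta_\e$ is Lipschitz with $\eta_\e(0)=0$, the product $\varphi_\e:=\eta_\e(u)\psi$ lies in the extended class, with $\nabla\varphi_\e=\eta_\e'(u)\psi\,\nabla u+\eta_\e(u)\nabla\psi$. Plugging $\varphi_\e$ into the extended identity and rearranging,
$$
\int_{B_1^+}|x_n|^a\eta_\e(u)\nabla u\cdot\nabla\psi\,dx=-\int_{B_1^+}|x_n|^a\eta_\e'(u)\psi|\nabla u|^2\,dx-\int_{B_1'}f\,\eta_\e(u)\,\psi\,dx'.
$$

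Both terms on the right are nonpositive: the bulk term because $\eta_\e',\psi\geq 0$; the boundary term because $f\eta_\e(u)\geq 0$ a.e.\ on $B_1'$ (on $\{u(\cdot,0)>0\}$ one has $f=\lambda_+>0$ and $\eta_\e(u)\geq 0$, while $\eta_\e(u)\equiv 0$ on $\{u(\cdot,0)\leq 0\}$). Hence the left-hand side is $\leq 0$. Letting $\e\to 0^+$, dominated convergence gives $\eta_\e(u)\nabla u\to\chi_{\{u>0\}}\nabla u=\nabla u^+$ in $L^2(B_1^+,x_n^a)$, so
$$
\int_{B_1^+}|x_n|^a\nabla u^+\cdot\nabla\psi\,dx\leq 0.
$$
Even symmetry of $u$ in $x_n$ is inherited by $u^+$ and promotes this to the full-ball inequality; a further density step extends it to all nonnegative $\psi\in W^{1,2}_0(B_1,|x_n|^a)$. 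The case of $u^-$ is identical after replacing $\eta_\e(u)$ by $\eta_\e(-u)$: the sign check is that on $\{u(\cdot,0)<0\}$ one has $f=-\lambda_-$, so that $(-f)\eta_\e(-u)\geq 0$.

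The only genuinely technical step is the density extension of \eqref{eq:normallim-weak} allowing $\varphi_\e=\eta_\e(u)\psi\in W^{1,2}_0(B_1,|x_n|^a)\cap L^\infty(B_1)$ as a test function; this should follow from approximating $u$ by smooth functions in the weighted Sobolev norm and invoking the continuity of the trace operator on $B_1'$. Everything else is a routine weighted Sobolev chain-rule computation combined with the sign analysis above.
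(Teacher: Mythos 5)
Your proof is correct, but the route is genuinely different from the paper's. You start from the Euler--Lagrange identity of Lemma~\ref{lem:normalbound}, extend it by density to $W^{1,2}_0(B_1,|x_n|^a)\cap L^\infty$, and then test against $\eta_\e(u)\psi$, exploiting the sign of $f$ on $\{u(\cdot,0)>0\}$ (since $\eta_\e(u)$ vanishes wherever $f$ could be negative) to kill the boundary term and the sign of $\eta_\e'$ to kill the bulk term. This is essentially a Kato-inequality argument for the operator $L_a$ with a signed Neumann-type source. The paper instead works directly with minimality, never invoking the variational identity: it inserts the explicit competitor $u_\e=(u-\e\psi)^+-u^-$, observes that $u_\e^+=(u-\e\psi)^+\le u^+$ and $u_\e^-=u^-$ so the penalty $\int_{B_1'}(\lambda_+u^++\lambda_-u^-)$ can only decrease, and then reads off $\int_{\{u>0\}}\nabla u\cdot\nabla\psi\,|x_n|^a\le 0$ from the resulting Dirichlet-energy inequality after dividing by $\e$ and letting $\e\to0^+$. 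The paper's argument is shorter and avoids both the density extension of the Euler--Lagrange identity and the chain-rule/trace bookkeeping your approach needs; yours has the advantage of being a more modular ``once you have the PDE with a signed source, subharmonicity of the parts follows'' template. One small point to keep in mind: your density step must ensure both sides of the identity converge, i.e.\ you need convergence of the approximating test functions in $W^{1,2}_0(B_1,|x_n|^a)$ \emph{and} of their traces in $L^1(B_1')$; this follows from continuity of the trace operator, but it is worth stating, since it is the only place where your argument is not purely local.
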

\begin{proof} For the nonnegative test function $\psi\in
  W^{1,2}_0(B_1,|x_n|^a)$ and small $\epsilon>0$, let 
$$
u_\epsilon=(u-\epsilon\psi)^+-u_-.
$$
Since $u=u_\epsilon$ on $\partial B_1$, from minimality of $u$ we must have $J_a(u)\leq
J_a(u_\epsilon)$. Noticing that, $u_\epsilon^+=(u-\epsilon\psi)^+\leq u^+$
and $u_\epsilon^-=u^-$ we therefore have
\begin{align*}
\int_{B_1\cap\{u>0\}}|\nabla u|^2|x_n|^a&\leq \int_{B_1\cap\{u>
  \epsilon\psi\}}|\nabla u-\epsilon\nabla \psi|^2|x_n|^a\\
&\leq \int_{B_1\cap\{u>0\}}|\nabla u-\epsilon\nabla \psi|^2|x_n|^a.
\end{align*}
This readily implies that
$$
\int_{B_1}\nabla (u^+)\nabla \psi |x_n|^a=\int_{B_1\cap\{u>0\}} \nabla
u\nabla \psi |x_n|^a\leq 0. 
$$
This proves $a$-subharmonicity of $u^+$. Arguing similarly, we
establish the same fact also for $u^-$. 
\end{proof}

\begin{cor}[Energy inequality]\label{cor:energy} Let $u$ be a minimizer of \eqref{eq:jfunc-B1}. Then we
  have the following inequality
\[
\int_{B_r}|\nabla u|^2 |x_n|^a \leq \frac{C}{r^2}\int_{B_{2r}} u^2|x_n|^a.
\]
\end{cor}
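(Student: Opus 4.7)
My plan is to derive the Caccioppoli-type bound directly from the minimality of $u$ by comparing it with a cut-off competitor of the form $v:=(1-\eta^2)u$, where $\eta\in C^\infty_0(B_{2r})$ is a standard cutoff with $\eta\equiv 1$ on $B_r$, $0\le\eta\le 1$, and $|\nabla \eta|\le C/r$. Since $v=u$ outside $B_{2r}$, it is an admissible competitor in the minimization of $J_a$ (for $r$ small enough that $B_{2r}\subset B_1$; the general case is reduced to this by scaling or covering).

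The key observation that makes the boundary integrand harmless is that, because $1-\eta^2\ge 0$, the function $v$ has the same sign as $u$. Hence $v^\pm=(1-\eta^2)u^\pm$, and consequently
\begin{equation*}
\int_{B_1'}(\lambda_+ v^++\lambda_- v^-) = \int_{B_1'}(1-\eta^2)(\lambda_+ u^++\lambda_- u^-)\le \int_{B_1'}(\lambda_+ u^++\lambda_- u^-).
\end{equation*}
Therefore the inequality $J_a(u)\le J_a(v)$ reduces to $\int_{B_1}|\nabla u|^2|x_n|^a\le \int_{B_1}|\nabla v|^2|x_n|^a$, with all further difficulty concentrated in the weighted Dirichlet term.

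From here the argument is purely algebraic. Expanding $\nabla v = (1-\eta^2)\nabla u - 2\eta u\,\nabla\eta$ and using $1-(1-\eta^2)^2=\eta^2(2-\eta^2)\ge \eta^2$ gives
\begin{equation*}
\int_{B_1}\eta^2|\nabla u|^2|x_n|^a \le 4\int_{B_1}(1-\eta^2)\eta|u||\nabla u||\nabla\eta||x_n|^a + 4\int_{B_1}\eta^2 u^2|\nabla\eta|^2|x_n|^a.
\end{equation*}
A weighted Cauchy-Schwarz applied to the cross term (with small constant in front of $\eta^2|\nabla u|^2$) lets one absorb the gradient term into the left-hand side, leaving
\begin{equation*}
\int_{B_1}\eta^2|\nabla u|^2|x_n|^a \le C\int_{B_1}u^2|\nabla\eta|^2|x_n|^a \le \frac{C}{r^2}\int_{B_{2r}}u^2|x_n|^a,
\end{equation*}
which, together with $\eta\equiv 1$ on $B_r$, is exactly the stated inequality.

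There is no real obstacle here; the only points requiring minor care are (i) verifying that the weight $|x_n|^a$ causes no difficulty in the Cauchy-Schwarz step (it does not, since both sides carry the same weight and no integration by parts is used), and (ii) ensuring $v$ is still in $W^{1,2}(B_1,|x_n|^a)$ with the correct boundary trace, which follows directly from $v=(1-\eta^2)u$ and $\eta\in C^\infty_0(B_{2r})$.
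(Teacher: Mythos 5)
Your argument is correct, and it follows a genuinely different route from the paper. The paper first proves (Lemma~\ref{lem:upm-a-subharm}) that $u^\pm$ are nonnegative $a$-subharmonic functions, and then invokes the standard Caccioppoli inequality for weighted subsolutions with test functions $u^\pm\eta^2$, summing the two resulting estimates. You instead work directly from minimality of $J_a$ by inserting the competitor $v=(1-\eta^2)u$; the key observation that $v^\pm=(1-\eta^2)u^\pm$ (so the thin-boundary term only decreases, because $0\le 1-\eta^2\le 1$) is exactly what lets you reduce $J_a(u)\le J_a(v)$ to the pure Dirichlet-energy comparison $\int|\nabla u|^2|x_n|^a\le\int|\nabla v|^2|x_n|^a$, after which the expansion and weighted Young absorption are routine (and you are right that the weight $|x_n|^a$ rides along harmlessly since no integration by parts appears). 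The trade-off is that your route is more self-contained — it bypasses Lemma~\ref{lem:upm-a-subharm} entirely — whereas the paper's route gives the slightly finer information $\int_{B_r}|\nabla u^\pm|^2|x_n|^a\le Cr^{-2}\int_{B_{2r}}(u^\pm)^2|x_n|^a$ for each phase separately, and reuses a lemma that the paper needs anyway (e.g.\ for Corollary~\ref{cor:loc-bdd}). One trivial cleanup: the covering/scaling remark is unnecessary, since the estimate is only meaningful (and only claimed) when $B_{2r}\subset B_1$, which is exactly the regime where your competitor is admissible.
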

\begin{proof} Since $u^\pm$ are nonnegative and $a$-subharmonic, the
  standard proof of the Caccioppoli inequality (with test functions $u^\pm\eta^2$) applies and gives
$$
\int_{B_r}|\nabla u_\pm|^2|x_n|^a\leq \frac{C_n}{r^2}\int_{B_{2r}} u_\pm^2|x_n|^a.
$$
Combining these two inequalities, we complete the proof. 
\end{proof}

\begin{cor}[Local boundedness]\label{cor:loc-bdd} Let $u$ be a minimizer
  \eqref{eq:jfunc-B1}. Then 
$$
\| u\|_{L^\infty(B_{1/2})}\leq C(n,a)\|u\|_{L^2(B_1,|x_n|^a)}
$$
\end{cor}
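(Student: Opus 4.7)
The plan is to run a Moser iteration on $v=u^+$ and on $v=u^-$ separately, since both are nonnegative $a$-subharmonic functions by Lemma~\ref{lem:upm-a-subharm}, and then combine. The weight $|x_n|^a$ lies in the Muckenhoupt $A_2$ class for $|a|<1$, so the weighted De~Giorgi--Nash--Moser theory of Fabes--Kenig--Serapioni \cite{FKS82} is available.

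Concretely, I would first show that for each $p\geq 1$, $v^p$ satisfies a Caccioppoli-type inequality
$$
\int_{B_r}|\nabla (v^p)|^2|x_n|^a\d x\leq \frac{Cp^2}{(R-r)^2}\int_{B_R}v^{2p}|x_n|^a\d x,
$$
obtained by testing the weak inequality $L_a v\geq 0$ against $v^{2p-1}\eta^2$ with a standard cutoff $\eta$ supported in $B_R$ and equal to $1$ on $B_r$. (Admissibility is handled by truncating $v$ at level $k$ and sending $k\to\infty$ via monotone convergence, exactly as in the unweighted case.) Combining with the weighted Sobolev embedding from \cite{FKS82},
$$
\Bigl(\int_{B_r}|w|^{2\chi}|x_n|^a\Bigr)^{1/\chi}\leq C(n,a)\,r^{2(1-1/\chi)}\int_{B_r}|\nabla w|^2|x_n|^a,\qquad w\in W^{1,2}_0(B_r,|x_n|^a),
$$
for some $\chi=\chi(n,a)>1$, produces the reverse-H\"older gain
$$
\|v^p\|_{L^{2\chi}(B_r,|x_n|^a)}\leq \frac{C(n,a)\,p}{R-r}\,\|v^p\|_{L^2(B_R,|x_n|^a)}.
$$

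Finally, iterate with exponents $p_k=\chi^k$ and radii $r_k=\tfrac12+2^{-k-1}$ in the usual Moser fashion, collecting the geometric series in $\log C$ to produce
$$
\|v\|_{L^\infty(B_{1/2})}\leq C(n,a)\,\|v\|_{L^2(B_1,|x_n|^a)}.
$$
Applying this to $u^+$ and $u^-$ and adding gives the claim. The only mildly delicate point is the test-function justification for $v^{2p-1}\eta^2$ (needed because $v^{2p-1}$ is not a priori in $W^{1,2}(B_1,|x_n|^a)$ for large $p$), but the truncation argument and the $A_2$ property of $|x_n|^a$ make this routine; everything else is a direct appeal to \cite{FKS82}.
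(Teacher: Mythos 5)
Your proposal is correct and takes essentially the same route as the paper: the paper simply invokes Lemma~\ref{lem:upm-a-subharm} together with the $L^\infty$--$L^2$ estimate for nonnegative $a$-subharmonic functions from \cite{FKS82}, and your argument is exactly the Moser iteration (Caccioppoli plus the weighted Sobolev embedding for the $A_2$ weight $|x_n|^a$) that underlies that cited estimate.
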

\begin{proof} This follows from $L^\infty$-$L^2$ estimates for
  $a$-subharmonic functions $u^\pm$, see \cite{FKS82}.
\end{proof}

%\begin{rem}\label{rem:subh-EL} In Lemma \ref{lem:upm-a-subharm}, Corollary \ref{cor:energy} and Corollary \ref{cor:loc-bdd}, we may replace the requirement that $u$ is  minimizer of $J_a$ in $B_1$ with the following conditions:
%\begin{enumerate}
%\item $L_a u=0$ in $B_1^+$;
%\item $|\lim_{x_n\to 0+} x_n^a u_{x_n}(x',x_n)|\leq \mu$ for $x'\in
%  B_1'$ in the sense that
%\[
%\Big|\int_{B_1^+}(\nabla u\cdot\nabla \psi) x_n^a\Big|\leq
%\mu\int_{B_1'}|\psi|,
%\]
%for any $\psi\in C^\infty_0(B_1)$ (cf.\ Lemma~\ref{lem:normalbound});
%\item $|u|\leq M$ in $B_1^+$.
%\end{enumerate}
%\end{rem}

For our further study, we will need some properties of $a$-harmonic
functions. We start with the observation that Corollaries~\ref{cor:energy} and
\ref{cor:loc-bdd} are applicable also to $a$-harmonic functions (e.g.,
take $\lambda_\pm=0$ in the proofs above). Besides, since $|x_n|^a$ is
an $A_2$ weight, we have the following result from \cite{FKS82}.

\begin{lem}[H\"older continuity]\label{lem:a-harm-holder}
\pushQED{\qed}
 Let $u$ be $a$-harmonic in $B_1$. Then for some
  $\alpha>0$, 
$u\in C^{0,\alpha}(B_{1/2})$ and
\[
\|u\|_{C^{0,\alpha}(B_{1/2})}\leq C(n,a,\alpha)\|u\|_{L^2(B_1)}.\qedhere
\]
\popQED
\end{lem}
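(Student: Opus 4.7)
The plan is to deduce this from the general De Giorgi–Nash–Moser theory for divergence form equations with Muckenhoupt $A_2$ weights, as developed in Fabes–Kenig–Serapioni \cite{FKS82}. The key preliminary observation is that $|x_n|^a$ is an $A_2$ weight on $\R^n$ whenever $|a|<1$: one directly verifies
$$
\Bigl(\dashint_B |x_n|^a\Bigr)\Bigl(\dashint_B |x_n|^{-a}\Bigr)\leq C(n,a)
$$
for every ball $B\subset\R^n$, by splitting into the cases where $B$ is close to or far from the hyperplane $\{x_n=0\}$. Once this is in place, the weighted Poincaré and Sobolev inequalities of \cite{FKS82} hold on balls.

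First I would establish a Caccioppoli-type inequality for $(u-k)^\pm$: testing the weak formulation $\int |x_n|^a\nabla u\nabla\psi = 0$ against $\psi=\eta^2(u-k)^\pm$, where $\eta$ is a standard cutoff, and using Cauchy–Schwarz yields
$$
\int_{B_r}|\nabla(u-k)^\pm|^2|x_n|^a\leq \frac{C}{(R-r)^2}\int_{B_R}((u-k)^\pm)^2|x_n|^a.
$$
Combined with the weighted Sobolev embedding $W^{1,2}_0(B,|x_n|^a)\hookrightarrow L^{2\kappa}(B,|x_n|^a)$ for some $\kappa>1$, this is the input for Moser iteration, giving local boundedness (already noted in Corollary~\ref{cor:loc-bdd}) and, more importantly, a De Giorgi-type oscillation decay
$$
\osc_{B_{r/2}}u\leq \theta\,\osc_{B_r}u,
$$
with $\theta=\theta(n,a)\in(0,1)$. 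The standard dichotomy argument for this step is: either the set $\{u>\mathrm{med}\,u\}$ or $\{u<\mathrm{med}\,u\}$ captures a definite fraction of the weighted measure of $B_r$; apply the weighted weak Harnack / De Giorgi lemma to that half, transferring control from a measure-theoretic statement on $B_r$ into a pointwise bound on $B_{r/2}$.

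Finally, I would iterate the oscillation decay across dyadic balls $B_{2^{-k}r}(x_0)$ to obtain
$$
\osc_{B_\rho(x_0)}u\leq C\Bigl(\frac{\rho}{r}\Bigr)^\alpha\|u\|_{L^\infty(B_r)},
$$
with $\alpha=\log_2(1/\theta)$, for any $x_0\in B_{1/2}$, and combine with Corollary~\ref{cor:loc-bdd} to absorb the $L^\infty$ norm into $\|u\|_{L^2(B_1,|x_n|^a)}$ (note that in the cited statement the right-hand side is written $\|u\|_{L^2(B_1)}$, but the natural weighted $L^2$ norm is comparable on $B_1$ modulo constants).

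The only genuine obstacle is the oscillation decay step, which rests on the weighted Poincaré/Sobolev inequalities and a weighted analogue of the De Giorgi level-set lemma; both are exactly the content of \cite{FKS82}, so citing that reference closes the argument without further computation.
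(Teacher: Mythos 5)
Your proposal is exactly what the paper intends: the paper gives no proof at all but simply cites Fabes--Kenig--Serapioni \cite{FKS82}, whose content is precisely the weighted De Giorgi--Nash--Moser theory you outline (the $A_2$ property of $|x_n|^a$, Caccioppoli, weighted Sobolev/Poincar\'e, oscillation decay, dyadic iteration). One small caveat: the weighted and unweighted $L^2$ norms over $B_1$ are not actually comparable when $a<0$ (the pointwise inequality $|x_n|^a\geq 1$ goes the wrong way for bounding the weighted norm by the unweighted one), so the estimate is most naturally stated against $\|u\|_{L^2(B_1,|x_n|^a)}$; this is a cosmetic point that does not affect the argument.
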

Next, we the following derivative estimates in $x'$ directions
(see \cite[Corollary~2.5]{CSS08}).

\begin{lem}[Interior estimates]\label{lem:thin-grad-est}
\pushQED{\qed}
Let $u$ be
  $a$-harmonic in a ball $B_r(x)$. Then for any positive integer $k$
\[
\sup_{B_{r/2}(x)}|D_{x'}^k u|\leq\frac{C(n,a,k)}{r^k}\osc_{B_r(x)}
u\qedhere
\]
\popQED
\end{lem}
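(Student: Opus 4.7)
The plan is to exploit the translation invariance of the operator $L_a=\div(|x_n|^a\nabla\cdot)$ in the tangential directions $x_1,\dots,x_{n-1}$, which holds because the weight $|x_n|^a$ depends only on $x_n$. By translating and scaling via $\tilde u(y):=u(x+ry)$, it suffices to establish $\sup_{B_{1/2}}|D_{x'}^k u|\leq C\osc_{B_1}u$ for $u$ that is $a$-harmonic in $B_1$.

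First, I would show that tangential derivatives inherit $a$-harmonicity. For $i\in\{1,\dots,n-1\}$ and $|h|<\delta$, the translate $u(\cdot+he_i)$ is $a$-harmonic in $B_{1-\delta}$, and hence so is the difference quotient $\tau_h^i u:=(u(\cdot+he_i)-u)/h$. The energy inequality (Corollary~\ref{cor:energy}) applied to $\tau_h^i u$ yields a uniform-in-$h$ bound on $\|\tau_h^i u\|_{W^{1,2}(B_{1-2\delta},|x_n|^a)}$ in terms of $\|u\|_{L^2(B_1,|x_n|^a)}$. Extracting a weak limit as $h\to 0$ and passing to the limit in the weak formulation of $a$-harmonicity (standard for the $A_2$ weight $|x_n|^a$) shows that $\partial_{x_i}u$ is weakly $a$-harmonic in $B_{1-2\delta}$. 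Iterating, $D_{x'}^\alpha u$ is $a$-harmonic for every tangential multi-index $\alpha$.

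Next I would bootstrap by combining the $L^\infty$-$L^2$ estimate with Caccioppoli. Applying Corollary~\ref{cor:loc-bdd}, which is valid for $a$-harmonic functions, to $D_{x'}^k u$ gives (after suitable rescaling)
\[
\sup_{B_{1/2}}|D_{x'}^k u|\leq C\,\|D_{x'}^k u\|_{L^2(B_{3/4},|x_n|^a)}.
\]
Then $k$ successive applications of the Caccioppoli inequality (Corollary~\ref{cor:energy}) to the $a$-harmonic functions $D_{x'}^{k-1}u,\,D_{x'}^{k-2}u,\dots,u$ on a chain of nested balls $B_{3/4}\subset\dots\subset B_1$ reduce the right-hand side to $C(n,a,k)\|u\|_{L^2(B_1,|x_n|^a)}$. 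Finally, since $u-u(0)$ is again $a$-harmonic with identical tangential derivatives (for $k\geq 1$), and $u$ is continuous at $0$ by Lemma~\ref{lem:a-harm-holder}, I may replace $u$ by $u-u(0)$ throughout; then $\|u-u(0)\|_{L^2(B_1,|x_n|^a)}\leq C\|u-u(0)\|_{L^\infty(B_1)}\leq C\osc_{B_1}u$, which closes the estimate.

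The main technical point is the justification of the difference-quotient limit in the first step under the degenerate/singular weight $|x_n|^a$: one needs weak compactness of bounded sequences in $W^{1,2}(B,|x_n|^a)$ and stability of the weak equation $\int|x_n|^a\nabla v\cdot\nabla\psi\,dx=0$ under such limits. Both are routine facts for $A_2$ weights but deserve explicit mention, and they are the only nontrivial ingredient beyond the previously established Caccioppoli and local boundedness estimates.
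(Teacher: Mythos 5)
The paper does not prove Lemma~\ref{lem:thin-grad-est}: it is stated with a \verb|\qed| and a pointer to \cite[Corollary~2.5]{CSS08}, so there is no in-paper proof to compare against. Your argument follows the same strategy as the cited reference: use the tangential translation invariance of $L_a$ to show that $D_{x'}^\alpha u$ is again $a$-harmonic, then combine the Caccioppoli inequality (Corollary~\ref{cor:energy}) with the weighted De~Giorgi/Moser $L^\infty$--$L^2$ estimate (Corollary~\ref{cor:loc-bdd}), and finally normalize by subtracting $u(0)$ to convert an $L^2$ bound into an oscillation bound. The difference-quotient step, the bootstrapping through nested balls, and the concluding $\|u-u(0)\|_{L^2(B_1,|x_n|^a)}\leq C\osc_{B_1}u$ argument are all sound, and you correctly flag the only genuinely delicate point (weak compactness and stability of the weak equation in $W^{1,2}(\cdot,|x_n|^a)$, which is standard for $A_2$ weights).

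One step as written is not quite right, though it is easily repaired. You claim that ``translating and scaling via $\tilde u(y):=u(x+ry)$'' reduces matters to $u$ $a$-harmonic in $B_1$. This fails when $x_n\neq 0$: the weight $|x_n|^a$ is not invariant under translation in the $x_n$ direction, so $u(x+ry)$ solves $\div\bigl(|x_n+ry_n|^a\nabla\cdot\bigr)=0$ rather than $L_a\tilde u=0$ (you yourself note that the invariance is only tangential). The clean reduction is: after a tangential translation and a dilation, one may assume $r=1$ and $x=(0,\dots,0,x_n)$ with $x_n\geq 0$, and the constant must then be shown uniform in $x_n$. The case $x_n=0$ is exactly what your argument handles. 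For $x_n\geq c>0$ the ball $B_{1/2}(x)$ stays a positive distance from the thin space, so the equation $\Delta u + a\,y_n^{-1}\partial_{y_n}u=0$ is uniformly elliptic with coefficients bounded in $C^\infty$ after rescaling, and classical interior estimates apply. The intermediate range $0<x_n<c$ is handled by covering $B_{1/2}(x)$ by thin-space-centered balls (for points near the thin space) together with balls well inside the elliptic region; the constants one obtains are uniform. This covering step should be made explicit, but it does not affect the core of your proof, which is correct.
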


Note that because of the weight $|x_n|^a$, similar estimates for the
derivatives in $x_n$ must be weighted accordingly.

Further, at several points we will need the following result \cite[Lemma~2.7]{CSS08}.

\begin{lem}[Liouville-type theorem]\label{lem:liouville} Let $u$ be
  $a$-harmonic in $\R^n$, even with respect to $x_n$ and 
$$
|u(x)|\leq C(1+|x|^k), 
$$
for some $C,k>0$. Then $u$ is a polynomial.\qed
\end{lem}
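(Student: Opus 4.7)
The plan is a two-stage Liouville argument: first use the interior estimate in Lemma~\ref{lem:thin-grad-est} to reduce $u$ to a polynomial in the thin variables $x'$, then exploit the $a$-harmonic equation, together with the global (distributional) weak formulation across the thin space, to obtain polynomial behavior in $x_n$ as well.

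For the first stage, fix any $x_0\in\R^n$ and apply Lemma~\ref{lem:thin-grad-est} with derivative order $k+1$ on $B_R(x_0)$:
$$
\sup_{B_{R/2}(x_0)}|D^{k+1}_{x'}u|\leq \frac{C}{R^{k+1}}\osc_{B_R(x_0)} u \leq \frac{C'}{R^{k+1}}\bigl(1+(|x_0|+R)^k\bigr).
$$
Letting $R\to\infty$ gives $D^{k+1}_{x'}u(x_0)=0$, so for each fixed $x_n$, the slice $u(\cdot,x_n)$ is a polynomial in $x'$ of degree at most $k$. Consequently
$$
u(x',x_n)=\sum_{|\alpha|\leq k}c_\alpha(x_n)\,(x')^\alpha,
$$
where each $c_\alpha$ is obtained by differentiating $u$ in $x'$ at $x'=0$, is even in $x_n$, and satisfies $|c_\alpha(x_n)|\leq C(1+|x_n|^k)$.

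For the second stage, substitute this expansion into the classical form of the equation $\partial_{x_n}^2 u+\tfrac{a}{x_n}\partial_{x_n}u+\Delta_{x'}u=0$, valid pointwise for $x_n\neq 0$, and match coefficients of the monomials $(x')^\alpha$. This yields a triangular system of ODEs
$$
c''_\alpha(x_n)+\frac{a}{x_n}c'_\alpha(x_n)=-\!\!\sum_{|\beta|=|\alpha|+2}\!\!d_{\alpha,\beta}\,c_\beta(x_n),
$$
with the convention that the right-hand side vanishes when $|\alpha|\in\{k-1,k\}$. Solving in descending order of $|\alpha|$ (and by parity), the two-dimensional homogeneous solution space on $\{x_n>0\}$ is spanned by $1$ and $x_n^{1-a}$, and particular solutions driven by polynomial right-hand sides may be chosen to be polynomials. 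Thus, after extending evenly in $x_n$, each coefficient has the form $c_\alpha(x_n)=p_\alpha(x_n)+B_\alpha|x_n|^{1-a}$ with $p_\alpha$ a polynomial.

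The remaining step is to rule out the non-polynomial mode, and this is where the full-space assumption is used. The function $|x_n|^{1-a}$ is \emph{not} $a$-harmonic on $\R^n$ in the weak sense; a direct computation shows that testing $\int|x_n|^a\nabla(|x_n|^{1-a})\cdot\nabla\psi\,\d x$ against $\psi\in C^\infty_0(\R^n)$ produces a nontrivial surface term proportional to $\int\psi(x',0)\,\d x'$. Testing the identity $\int|x_n|^a\nabla u\cdot\nabla\psi=0$ with $\psi(x)=(x')^\alpha \chi(x')\eta(x_n)$, where $\chi$ is a cutoff and $\eta$ localizes near $x_n=0$, isolates the coefficient $B_\alpha$ and forces $B_\alpha=0$ for every $\alpha$. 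Hence $c_\alpha=p_\alpha$ is a polynomial for each $\alpha$, and $u$ is a polynomial.

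The main technical obstacle is the final step: the mode $|x_n|^{1-a}$ respects evenness, belongs locally to $W^{1,2}(\R^n,|x_n|^a)$, and has growth comparable to (or smaller than) $|x_n|^k$, so neither the growth bound nor the symmetry alone excludes it. It is specifically the global weak formulation across $\{x_n=0\}$ that selects the polynomial branch.
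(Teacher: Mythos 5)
The paper does not prove this lemma; it cites it from \cite[Lemma~2.7]{CSS08} with a terminal \verb|\qed|, so there is no in-paper argument to compare against. Your two-stage plan (polynomial in $x'$ via the interior estimates of Lemma~\ref{lem:thin-grad-est}, then an ODE analysis in $x_n$ closed by the global weak formulation across $\{x_n=0\}$) is essentially the standard route for this statement, and Stage~1 is fine: globally $a$-harmonic functions have $a$-harmonic $x'$-derivatives, so the rescaled estimate kills $D^{\lfloor k\rfloor+1}_{x'}u$ and gives $u(x',x_n)=\sum_{|\alpha|\le \lfloor k\rfloor}c_\alpha(x_n)(x')^\alpha$ with each $c_\alpha$ even and polynomially bounded.

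The gap is in Stage~2, at the sentence ``particular solutions driven by polynomial right-hand sides may be chosen to be polynomials. Thus \ldots\ $c_\alpha(x_n)=p_\alpha(x_n)+B_\alpha|x_n|^{1-a}$.'' Solving the triangular system in descending order of $|\alpha|$, the right-hand side at level $|\alpha|$ is $-\sum_{|\beta|=|\alpha|+2}d_{\alpha,\beta}c_\beta$, and these $c_\beta$ are \emph{not yet known to be polynomial}: they a priori carry $B_\beta x_n^{1-a}$ terms. Feeding $x_n^{1-a}$ into $c''+\tfrac{a}{x_n}c'=\cdot$ produces a particular solution proportional to $x_n^{3-a}$ (and, descending further, $x_n^{5-a}$, etc.), so the correct a priori form is $c_\alpha(x_n)=p_\alpha(x_n)+q_\alpha(x_n)\,|x_n|^{1-a}$ with $q_\alpha$ an even polynomial, not a single constant $B_\alpha$. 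Your testing step, as written, only isolates $q_\alpha(0)$ (the coefficient of the pure $|x_n|^{1-a}$ mode), so by itself it does not immediately kill the $|x_n|^{3-a},|x_n|^{5-a},\ldots$ modes. The fix is easy but should be stated: either interleave the testing with the descending induction (kill $B_\alpha$ at the top level, which makes the next level's right-hand side genuinely polynomial, then repeat), or, more cleanly, observe that $\lim_{x_n\to 0^+}x_n^a\,\partial_{x_n}u(x',x_n)=(1-a)\sum_\alpha q_\alpha(0)(x')^\alpha$ is a polynomial in $x'$ and must vanish identically (by evenness and global weak $a$-harmonicity), forcing $q_\alpha(0)=0$ for every $\alpha$; then a single downward induction through the ODE system, now with genuinely polynomial right-hand sides, shows $q_\alpha\equiv 0$. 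With that repair the argument is correct.
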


We next state an Almgren-type frequency formula, which first appeared
in \cite[Theorem~6.1]{CS07}:
\begin{lem}[Almgren-type frequency formula] \label{lem:almgren}Let $u$ be $a$-harmonic in $B_1$. Then the function
$$
N(r)=N(u,r)=\frac{\displaystyle r\int_{B_r}|\nabla u|^2 |x_n|^a}{\displaystyle\int_{\dd B_r}u^2 |x_n|^a }
$$
is increasing in $r$, for $r\in (0,1)$. Moreover, $N(r)$ is constant in $(r_1,r_2)$ if and only if $u$ is homogeneous of degree $N(r_1)$ in $B_{r_2}\setminus B_{r_1}$.\qed
\end{lem}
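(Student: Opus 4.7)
The plan is to follow the classical Almgren scheme in the $a$-weighted setting. Set
\[
D(r):=\int_{B_r}|\nabla u|^2|x_n|^a\,dx,\qquad H(r):=\int_{\dd B_r}u^2|x_n|^a,
\]
so that $N(r)=rD(r)/H(r)$, and I would prove monotonicity by establishing the three standard identities: an integration-by-parts identity, a formula for $H'(r)$, and a Rellich/Pohozaev-type formula for $D'(r)$. The one delicate point throughout is that the weight $|x_n|^a$ degenerates (or blows up) on $\{x_n=0\}$, so all integrations by parts should be justified by first integrating over $B_r\cap\{|x_n|>\delta\}$ and letting $\delta\to 0$, using that $|x_n|^a$ is an $A_2$ weight and that $u\in W^{1,2}(B_1,|x_n|^a)$ so the boundary contribution on $\{|x_n|=\delta\}$ vanishes in the limit. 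The key algebraic steps are the identity $\operatorname{div}(|x_n|^a x)=(n+a)|x_n|^a$ (valid pointwise for $x_n\neq 0$) and the evenness of $u$ in $x_n$.

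First, testing $L_a u=0$ against $u$ (cut off and with the limit $\delta\to 0$) gives
\[
D(r)=\int_{\dd B_r}u\,u_\nu\,|x_n|^a.
\]
Second, scaling $H(r)=r^{n-1+a}\int_{\dd B_1}u(r\theta)^2|\theta_n|^a\,d\H^{n-1}(\theta)$ and differentiating yields
\[
H'(r)=\frac{n-1+a}{r}H(r)+2\int_{\dd B_r}u\,u_\nu|x_n|^a=\frac{n-1+a}{r}H(r)+2D(r).
\]
Third, testing $L_a u=0$ against the radial multiplier $x\cdot\nabla u$, using $\nabla u\cdot\nabla(x\cdot\nabla u)=|\nabla u|^2+\tfrac12 x\cdot\nabla(|\nabla u|^2)$ and the divergence identity for $|x_n|^a x$, I obtain the Pohozaev-type relation
\[
D'(r)=\frac{n-2+a}{r}D(r)+2\int_{\dd B_r}u_\nu^2|x_n|^a.
\]

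Combining these, a direct logarithmic-derivative computation collapses the $(n-1+a)/r$ and $(n-2+a)/r$ terms and leaves
\[
\frac{N'(r)}{N(r)}=\frac{1}{r}+\frac{D'(r)}{D(r)}-\frac{H'(r)}{H(r)}=\frac{2}{D(r)H(r)}\left[H(r)\int_{\dd B_r}u_\nu^2|x_n|^a-D(r)^2\right].
\]
The bracket is nonnegative by the Cauchy--Schwarz inequality applied to $D(r)=\int_{\dd B_r}u\,u_\nu|x_n|^a$ with weight $|x_n|^a$, which proves $N'(r)\geq 0$. For the equality case, $N$ being constant on $(r_1,r_2)$ forces equality in Cauchy--Schwarz on a.e.\ sphere, i.e.\ $u_\nu(x)=c(|x|)u(x)$ on $\dd B_r$ for $r\in(r_1,r_2)$. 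Separation of variables then gives $u(r\theta)=\rho(r)\phi(\theta)$ on $B_{r_2}\setminus B_{r_1}$; plugging this into $L_a u=0$ and using the $a$-harmonicity forces $\rho(r)=r^{\kappa}$ for a constant $\kappa$, and a direct computation of $N$ on a homogeneous function of degree $\kappa$ yields $N\equiv \kappa$, hence $\kappa=N(r_1)$.

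The main obstacle is not a conceptual one but the careful justification of the two integrations by parts (for the energy identity and for the Pohozaev identity) across the degenerate set $\{x_n=0\}$; this is handled by the $A_2$/cut-off argument sketched above, which is standard for $a$-harmonic functions and essentially the content of the analogous computation in \cite{CS07}.
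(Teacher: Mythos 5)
The paper gives no proof of this lemma: it is stated with a \qed and attributed to \cite[Theorem~6.1]{CS07}. Your argument is the standard Almgren frequency argument for the $A_2$-weight $|x_n|^a$, which is precisely the route taken in the cited reference, and it is correct: the energy identity $D(r)=\int_{\dd B_r}u\,u_\nu|x_n|^a$, the derivative identity $H'(r)=\frac{n-1+a}{r}H(r)+2D(r)$, and the Rellich--Pohozaev identity $D'(r)=\frac{n-2+a}{r}D(r)+2\int_{\dd B_r}u_\nu^2|x_n|^a$ are all right (using $\div(|x_n|^a x)=(n+a)|x_n|^a$), the $(n-1+a)/r$ and $(n-2+a)/r$ terms cancel in the logarithmic derivative exactly as you state, and the Cauchy--Schwarz rigidity gives the equality case. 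The only part left a bit terse is the justification that $\rho(r)=r^\kappa$ after separating variables; one clean way to close it is to note that once $N\equiv\kappa$, the $H'$ identity gives $H(r)=Cr^{n-1+a+2\kappa}$, and since $H(r)=\rho(r)^2 r^{n-1+a}\int_{\dd B_1}\phi^2|\theta_n|^a$ this forces $\rho(r)=cr^\kappa$.
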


Following the proof of \cite[Lemma~4.1]{Wei01} and using Lemma~\ref{lem:liouville} and Lemma~\ref{lem:almgren} we have the corollary below: 
\begin{cor} \label{cor:weisscor} Let $u$ be $a$-harmonic in $B_1$,
  even in $x_n$,  with $u(0)=0$. If $a<0$ assume also that $|\nabla_{x'} u(0)|=0$. Then
$$
\int_{B_r}|\nabla u|^2|x_n|^a\geq C(a)\int_{\dd B_r}u^2 |x_n|^a ,
$$
where $C(a)=1$ if $a\geq 0$ and $C(a)=2$ if $a<0$. Moreover, equality holds if and only if $u$ is homogeneous of degree $C(a)$.
\end{cor}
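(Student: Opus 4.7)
The plan is to reformulate the inequality via the Almgren frequency $N(u,\cdot)$ from Lemma~\ref{lem:almgren} and then argue by blow-up together with the Liouville theorem (Lemma~\ref{lem:liouville}). Up to the standard scale factor, the stated inequality is equivalent to the scale-invariant bound $N(u,r)\ge C(a)$ on $(0,1)$; since $r\mapsto N(u,r)$ is nondecreasing by Lemma~\ref{lem:almgren}, it suffices to establish the lower bound on the limit $N(u,0+):=\lim_{r\to 0+}N(u,r)$. Once this is in hand, monotonicity yields the claimed inequality for every $r\in(0,1)$, and the rigidity part of Lemma~\ref{lem:almgren} produces the equality clause: $N(u,r)=C(a)$ for some $r$ forces $N(u,\cdot)\equiv C(a)$, hence $u$ homogeneous of degree $C(a)$.

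To estimate $N(u,0+)$, I would pick $r_j\to 0+$ and set
\[
u_j(x)=\frac{u(r_jx)}{d_j},\qquad d_j^2=r_j^{-(n-1+a)}\int_{\dd B_{r_j}}u^2|x_n|^a,
\]
so that $\int_{\dd B_1}u_j^2|x_n|^a\equiv 1$. Each $u_j$ is $a$-harmonic on $B_{1/r_j}$, even in $x_n$, with $u_j(0)=0$, and by the scaling identity $N(u_j,R)=N(u,r_jR)\le N(u,1)$ whenever $r_jR\le 1$. This uniform frequency bound, coupled with the standard ODE $H'(r)/H(r)=(n-1+a)/r+2N(r)/r$ for $H(r)=\int_{\partial B_r}u_j^2|x_n|^a$, yields polynomial $L^2$-growth of $u_j$ uniform in $j$. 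Via Corollary~\ref{cor:energy} and the H\"older estimate of Lemma~\ref{lem:a-harm-holder} one extracts a locally uniformly convergent subsequence $u_{j_k}\to u_0$, where $u_0$ is $a$-harmonic on $\R^n$, even in $x_n$, vanishes at $0$, and is nontrivial by compactness of the trace operator on $\dd B_1$. Lemma~\ref{lem:liouville} then forces $u_0$ to be a polynomial, and the identity $N(u_0,R)=\lim_k N(u,r_{j_k}R)=N(u,0+)$, valid for every $R>0$, lets the rigidity part of Lemma~\ref{lem:almgren} make $u_0$ a nontrivial homogeneous polynomial of positive integer degree $k=N(u,0+)$.

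It remains to show $k\ge C(a)$. Since $u_0$ is even in $x_n$ and vanishes at the origin, certainly $k\ge 1$, which settles the case $a\ge 0$. For $a<0$, the hypothesis $|\nabla_{x'}u(0)|=0$ scales to $|\nabla_{x'}u_j(0)|=0$ for every $j$, and the thin-direction interior estimates of Lemma~\ref{lem:thin-grad-est}---which remain uniform up to $\{x_n=0\}$ because no weight enters in these $x'$-derivatives---pass to the limit to give $|\nabla_{x'}u_0(0)|=0$. Any nonzero homogeneous polynomial of degree $1$ that is even in $x_n$ has the form $\sum_{i<n}c_ix_i$ and so has nonzero thin gradient at $0$, whence $k\ge 2$. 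The main technical subtlety is precisely this transfer of the thin-gradient condition to the blow-up in the degenerate regime $a<0$: standard $C^1$-convergence for $a$-harmonic functions fails across $\{x_n=0\}$ in the $x_n$-direction, and Lemma~\ref{lem:thin-grad-est} is exactly the device that circumvents this obstruction.
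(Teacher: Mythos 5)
Your argument is correct and is essentially the paper's own proof, reorganized as a direct bound on $N(u,0+)$ rather than a contradiction starting from $N(r_0)<C(a)$: the same ingredients appear---Almgren monotonicity, a normalized blow-up made nontrivial by compactness of the boundary trace, Liouville (Lemma~\ref{lem:liouville}) to identify the limit as a homogeneous polynomial, and, for $a<0$, the thin-derivative estimate of Lemma~\ref{lem:thin-grad-est} to transfer $|\nabla_{x'}u(0)|=0$ to the blow-up. The only minor cosmetic differences are that you pass to a global blow-up via the uniform frequency/$H$-growth bound while the paper takes a local limit on $B_{1/2}$ and then extends by homogeneity, and for the equality clause the paper additionally invokes unique continuation to propagate the degree-$C(a)$ homogeneity from $B_{r_0}$ to all of $B_1$.
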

\begin{proof} We first prove the inequality by contradiction. If the statement is false then there is $r_0\in (0,1)$ such that $N(r_0)<C(a)$. By Lemma \ref{lem:almgren}, this implies
$N(r)<C(a)$ for all $r\in (0,r_0)$. Let 
$$
w_r(x)=\frac{w(rx)}{\displaystyle \Big(\int_{\dd B_1} w^2(rx)
  |x_n|^a\Big)^\frac12}.
$$
Then
\begin{equation}\label{eq:int1}
\int_{\dd B_1}w_r^2|x_n|^a = 1,
\end{equation}
$w_r$ is bounded in $W^{1,2}(B_1,|x_n|^a)$ and $a$-harmonic in
$B_1$. By the compactness of the trace operator,
Lemma~\ref{lem:a-harm-holder} and Corollary~\ref{cor:energy}, we can find a sequence $r_j\to 0+$ such that $w_{r_j}$ converges strongly in
$L^2(\partial B_1,|x_n|^a)\cap W^{1,2}(B_{1/2},|x_n|^a)\cap C(B_{1/2})$
(and also in $C^1_{x'}(B_{1/2})$ if $a<0$ from Lemma \ref{lem:thin-grad-est}) to a limit function $w_0$. Hence, for $s\in
(0,1/2)$ there holds 
$$
N(0+,w)=\lim_{r_j\to 0} N(sr_j,w)=\lim_{r_j\to 0} N(s,w_{r_j})=N(s,w_0),
$$
so that $N(s,w_0)$ is constant. Therefore, by Lemma
\ref{lem:almgren}, $w_0$ is homogeneous of degree $N(0+,w)<C(a)$ in
$B_{1/2}$. We can extend $w_0$ to $a$-homogeneous function in whole
$\R^n$. Note that $a$-harmonic functions are real analytic off the
thin space $\R^{n-1}\times\{0\}$, and therefore $a$-harmonic continuations are unique.

Now, if $a\geq 0$, Lemma \ref{lem:liouville} implies that $w_0=0$
contradicting \eqref{eq:int1}. 

If $a<0$, then Lemma \ref{lem:liouville} implies that $w_0$ is linear. Due to the extra assumption when $a<0$, $w_0=0$ and a contradiction is reached again.

Finally, if equality holds for some $r_0\in (0,1)$, then it must hold for all $r<r_0$, so by Lemma \ref{lem:almgren} and unique continuation, $u$ must be homogeneous of degree $C(a)$.
\end{proof}
The next result establishes a Weiss-type monotonicity formula, which
will be an important tool in our study. Since the proof is rather
technical, for readers' convenience we have moved it to the appendix.

\begin{thm}[Weiss-type monotonicity formula]  \label{thm:weiss} 
Let $u$ be a minimizer of \eqref{eq:jfunc-B1}. Then the functional 
\begin{align*}
r\mapsto W(r)=W(r,u)&:=\frac1{r^{n-a}}\int_{B_r}|\nabla u|^2 |x_n|^a
+\frac4{r^{n-a}}\int_{B_r'}(\lambda_+ u^++\lambda_-u^-)\\
&\qquad - \frac{1-a}{r^{n+1-a}}\int_{\dd B_r}u^2 |x_n|^a 
\end{align*}
is nondecreasing for $0<r<1$. Moreover, $W$ is constant for $r\in (r_1,r_2)$ if and only if $u$ is homogeneous of degree $1-a$ in the ring $r_1<|x|<r_2$.
\end{thm}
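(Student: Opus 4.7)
The plan is to rescale so that $W$ becomes a dimensionless quantity, differentiate in the scaling parameter, and recognize the derivative as a nonnegative boundary integral via an integration-by-parts identity of Pohozaev type.

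I would begin by setting $u_r(x) := r^{a-1} u(rx)$. A direct change of variables produces
\begin{align*}
W(r) &= \int_{B_1}|\nabla u_r|^2 |x_n|^a\d x + 4\int_{B_1'}(\lambda_+ u_r^+ + \lambda_- u_r^-)\d x' \\
&\quad - (1-a) \int_{\partial B_1} u_r^2\, |x_n|^a\d\H^{n-1},
\end{align*}
and a second change of variables (via the lift $v \mapsto r^{1-a} v(\cdot/r)$ on $B_r$) shows that $u_r$ minimizes $J_a$ on $B_1$ against its own boundary data on $\partial B_1$. Writing
$$Z := x \cdot \nabla u_r - (1-a) u_r,$$
one checks directly that $\dot u_r = Z/r$. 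Differentiating $W$ in $r$ and multiplying through by $r$ then gives
$$r W'(r) = 2\int_{B_1} |x_n|^a \nabla u_r \cdot \nabla Z + 4 \int_{B_1'} f_r Z - 2(1-a)\int_{\partial B_1} |x_n|^a u_r Z,$$
where $f_r$ is a measurable selection of $\bar F(u_r)$ furnished by Lemma~\ref{lem:normalbound}.

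The next step is to rewrite the bulk term. After even extension across $\{x_n = 0\}$, Lemma~\ref{lem:normalbound} reads
$$\int_{B_1} |x_n|^a \nabla u_r \cdot \nabla \psi = -2\int_{B_1'} f_r \psi \qquad \text{for all } \psi \in C^\infty_0(B_1).$$
Testing against $\psi = Z$, which lies only in $W^{1,2}(B_1,|x_n|^a)$ but is not compactly supported, requires picking up a boundary contribution on $\partial B_1$; a standard integration-by-parts argument yields
$$\int_{B_1} |x_n|^a \nabla u_r \cdot \nabla Z = \int_{\partial B_1} |x_n|^a Z \, \partial_\nu u_r - 2\int_{B_1'} f_r Z.$$
Substituting this into the expression for $rW'(r)$, the two $f_r Z$ contributions cancel, and since on $\partial B_1$ one has $\partial_\nu u_r = x \cdot \nabla u_r$, so $Z = \partial_\nu u_r - (1-a) u_r$ there, the remaining boundary integrals combine into a perfect square:
$$rW'(r) = 2\int_{\partial B_1} |x_n|^a \bigl(\partial_\nu u_r - (1-a) u_r\bigr)^2 = 2\int_{\partial B_1} |x_n|^a Z^2 \geq 0.$$
Unscaling produces $W'(r) \geq 0$. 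Moreover, if $W$ is constant on $(r_1, r_2)$, then $Z \equiv 0$ on $\partial B_\rho$ for every $\rho \in (r_1, r_2)$, which is precisely the Euler relation for $u$ being homogeneous of degree $1-a$ in the annulus $\{r_1 < |x| < r_2\}$.

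The main technical obstacle is the set-valued nature of $\bar F$ on the coincidence set $\{u_r = 0\} \subset B_1'$, where the selection $f_r$ is not unique. The key observation is that $\nabla u_r = 0$ almost everywhere on the level set $\{u_r = 0\}$ (a standard fact for Sobolev functions), so $Z = 0$ a.e.\ there and the product $f_r Z$ is unambiguously zero regardless of the selection; this also legitimizes the formal pointwise differentiation of $\Psi(u_r) = \lambda_+ u_r^+ + \lambda_- u_r^-$ under the integral sign when computing $\frac{d}{dr} \int_{B_1'} \Psi(u_r)$. A secondary technicality is that the integration by parts above must be justified for a $\psi$ not vanishing on $\partial B_1$, which is done by a standard limiting argument (testing on $B_\rho$ for $\rho \uparrow 1$ and tracking the boundary traces); this is presumably where the technical work deferred to the appendix is concentrated.
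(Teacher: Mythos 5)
Your proposal is correct and takes essentially the same route as the paper: both combine the Euler--Lagrange identity of Lemma~\ref{lem:normalbound} with a Rellich--Pohozaev type inner-variation identity to exhibit $W'(r)$ as the nonnegative boundary integral $\frac{2}{r^{n-a}}\int_{\partial B_r}|x_n|^a\bigl(u_\nu-\tfrac{(1-a)u}{r}\bigr)^2$. You package this through the rescaled function $u_r(x)=r^{a-1}u(rx)$ with $\dot u_r = Z/r$, whereas the paper works directly with $u$ on $B_r$ via the identity \eqref{eq:almgrenmin}, but the underlying calculation (including the cancellation of the $f\cdot Z$ contributions along the thin space and the recognition of the perfect square) is the same; your flagged technical issues (nonsmoothness of $Z$, set-valuedness of $\bar F$ on $\{u=0\}$, differentiability under the integral) are also the ones the paper defers to \cite{a11}.
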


As we will see, the case $a=0$ will require a special treatment. The
following result, known as the Alt-Caffarelli-Friedman (ACF)
monotonicity formula, will be instrumental in the study of that case.

\begin{lem}[ACF monotonicity formula]   \label{lem:ACF}
Let $\{w_+, w_-\}$ be a pair of nonnegative continuous subharmonic functions in $B_R$ such that 
$ w_+ \cdot w_- =0$ in $B_R$. Then the functional
\[
r \mapsto \Phi(r,w_+,w_-):= \frac{1}{r^4} \int_{B_r}{\frac{|\nabla w_+|^2}{|x|^{n-2}}}
 \int_{B_r}{\frac{|\nabla w_-|^2}{|x|^{n-2}}}
\]
is finite and nondecreasing for $0<r<R$.
\end{lem}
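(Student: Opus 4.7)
The plan is to follow the classical Alt--Caffarelli--Friedman strategy, whose heart is the Friedland--Hayman eigenvalue inequality on the sphere. Set
\[
I_\pm(r) := \int_{B_r}\frac{|\nabla w_\pm|^2}{|x|^{n-2}}\,dx,
\]
so that $\Phi(r) = I_+(r) I_-(r)/r^4$. Finiteness and the main estimate both rest on the observation that $1/|x|^{n-2}$ is harmonic away from $0$ while $w_\pm\ge 0$ is subharmonic, so $w_\pm \Delta w_\pm \geq 0$. Integration by parts against the test function $w_\pm/|x|^{n-2}$ (with a cutoff to remove the singularity at the origin, then sent to the delta) yields the basic inequality
\[
I_\pm(r) \leq \frac{1}{r^{n-2}} \int_{\partial B_r} w_\pm\, \partial_r w_\pm \, d\sigma,
\]
which is finite for a.e.\ $r$; this already gives the ``finite'' part of the claim.

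Next I would compute
\[
\frac{d}{dr}\log\Phi(r) = \frac{I_+'(r)}{I_+(r)} + \frac{I_-'(r)}{I_-(r)} - \frac{4}{r}, \qquad I_\pm'(r) = \frac{1}{r^{n-2}}\int_{\partial B_r}|\nabla w_\pm|^2\,d\sigma,
\]
which reduces monotonicity of $\Phi$ to the pointwise bound $I_+'/I_+ + I_-'/I_- \geq 4/r$ for a.e.\ $r$. The heart of the argument is a separate spherical estimate for each sign: let $\Omega_\pm(r) := \{w_\pm>0\}\cap\partial B_r$, viewed (after rescaling) as open subsets of $S^{n-1}$, and let $\alpha_\pm(r)\geq 0$ be the characteristic exponent determined by $\alpha_\pm(\alpha_\pm+n-2) = \lambda(\Omega_\pm(r))$, where $\lambda$ is the principal Dirichlet eigenvalue of the spherical Laplace--Beltrami operator. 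Combining the inequality for $I_\pm(r)$ above with Cauchy--Schwarz on $\partial B_r$, the decomposition $|\nabla w_\pm|^2 = (\partial_r w_\pm)^2 + r^{-2}|\nabla_\theta w_\pm|^2$, and the Rayleigh estimate $\int_{\partial B_r}|\nabla_\theta w_\pm|^2 \geq \lambda(\Omega_\pm(r))\int_{\partial B_r} w_\pm^2$ gives the key bound
\[
\frac{I_\pm'(r)}{I_\pm(r)} \geq \frac{2\alpha_\pm(r)}{r}.
\]

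Since the hypothesis $w_+\cdot w_-\equiv 0$ forces $\Omega_+(r)$ and $\Omega_-(r)$ to be disjoint for a.e.\ $r$, the final step is to invoke the \emph{Friedland--Hayman inequality} $\alpha_+(r) + \alpha_-(r) \geq 2$ for disjoint open subsets of $S^{n-1}$; combined with the previous display this gives $(d/dr)\log\Phi(r)\geq 0$, as desired.

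The main obstacle, and the only genuinely deep ingredient, is the Friedland--Hayman inequality itself, whose proof proceeds via Sperner-type spherical (cap) symmetrization of the eigenfunctions on $S^{n-1}$ and is considerably more subtle than everything else in the argument; in this write-up I would cite it rather than reproduce it. A secondary technicality is justifying the integrations by parts when $w_\pm$ is only assumed continuous and subharmonic, which I would handle either by mollification followed by a limit, or by interpreting $\Delta w_\pm$ as a nonnegative Radon measure throughout.
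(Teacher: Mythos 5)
The paper does not prove this lemma: it cites \cite{bCS05,PSU12} and the original paper \cite{ACF84} and moves on. So your write-up is not a ``different route'' from the paper's proof but rather the standard Alt--Caffarelli--Friedman argument (spherical Rayleigh quotient, logarithmic derivative of $\Phi$, Friedland--Hayman eigenvalue inequality) that those references contain, which is the intended proof. With Friedland--Hayman treated as a black box, as you propose, the plan is right.

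There is, however, one genuine error. The basic inequality you state,
\[
I_\pm(r)\ \leq\ \frac{1}{r^{n-2}}\int_{\partial B_r}w_\pm\,\partial_r w_\pm\,d\sigma,
\]
is false. Carrying out the computation you sketch --- writing $|\nabla w|^2=\tfrac12\Delta(w^2)-w\Delta w$, discarding $-w\Delta w/|x|^{n-2}\le 0$, applying Green's second identity with the harmonic kernel $|x|^{-(n-2)}$ on $B_r\setminus B_\epsilon$, and letting $\epsilon\to 0$ --- produces a \emph{two}-term boundary expression,
\[
I_\pm(r)\ \leq\ \frac{1}{r^{n-2}}\int_{\partial B_r}w_\pm\,\partial_r w_\pm\,d\sigma
\ +\ \frac{n-2}{2\,r^{n-1}}\int_{\partial B_r}w_\pm^2\,d\sigma,
\]
together with a harmless nonpositive contribution $-c_n w_\pm(0)^2$ from the removed singularity. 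The term you dropped is precisely what makes the characteristic exponent $\alpha_\pm$ appear: setting $A=\int_{\partial B_r}(\partial_r w_\pm)^2$ and $B=\int_{\partial B_r}w_\pm^2$, Cauchy--Schwarz and the Rayleigh estimate give $I_\pm'/I_\pm \geq \bigl(A+\lambda B/r^2\bigr)\big/\bigl(\sqrt{AB}+(n-2)B/(2r)\bigr)$, and minimizing the right-hand side over $\sqrt{A/B}$ yields exactly $2\alpha_\pm/r$ with $\alpha_\pm(\alpha_\pm+n-2)=\lambda(\Omega_\pm)$. Without the extra term the optimization produces $2\sqrt{\lambda(\Omega_\pm)}/r$, and in any case the inequality you actually start from is false: for $w=x_1^+$ in $\R^n$ with $n\ge 3$, the left-hand side equals $\omega_{n-1}r^2/4$ while your right-hand side equals $\omega_{n-1}r^2/(2n)$, which is strictly smaller. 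With the corrected intermediate inequality your argument closes as intended.
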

For the proof of Lemma \ref{lem:ACF} we refer to \cite{bCS05,PSU12}, or
the original paper \cite{ACF84}. We also note that we can change the
center of the ball to $x_0$ by replacing $\Phi$ with $\Phi^{x_0}$
given by
\[
 \Phi^{x_0}(r,w_+,w_-):= \frac{1}{r^4} \int_{B_r(x_0)}{\frac{|\nabla w_+|^2}{|x-x_0|^{n-2}}}
 \int_{B_r(x_0)}{\frac{|\nabla w_-|^2}{|x-x_0|^{n-2}}}
\]

\begin{rem}  \label{rem:ACF} 
For $w_r(x): = w(rx)/r$, the functional $\Phi (r)$ enjoys the following rescaling property:
$$
\Phi(r,w_+,w_-) = \Phi(1,(w_+)_r, (w_-)_r)
$$
\end{rem}
\begin{rem} If $u$ is a minimizer of \eqref{eq:jfunc-B1} and $a=0$, then $w_\pm=u^\pm$ are subharmonic so that Lemma \ref{lem:ACF} applies to the pair.
\end{rem}

\section{Regularity of minimizers}\label{sec:regul-minim}

 In this section we prove the
H\"older continuity of the minimizers of \eqref{eq:jfunc-B1} for all
$a\in (-1,1)$ and $C^{1,\alpha}$ regularity along the thin space when $a\in (-1,0)$. As we
will see these regularity results are not optimal, but necessary for
technical reasons. The optimal regularity is established in the next section.

\begin{thm}[H\"older continuity]\label{thm:holder} Let $u$ be a minimizer of
  \eqref{eq:jfunc-B1}. Then $u\in C^{0,s}(B_1)$,
  $s=(1-a)/2$. Moreover, there exists $C=C(a,n,\lambda_\pm)>0$ such that 
$$
\|u\|_{C^{0,s}(B_{1/2})}\leq C\|u\|_{L^2(B_1, |x_n|^a)}.
$$
\end{thm}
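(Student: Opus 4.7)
The plan is to establish the H\"older continuity via an oscillation-decay argument, comparing $u$ to the $a$-harmonic replacement on small balls. First, by Corollary~\ref{cor:loc-bdd}, we may assume without loss of generality that $\|u\|_{L^\infty(B_{3/4})}\le M$. The goal is then to show that for every $x_0\in B_{1/2}'$ and every sufficiently small $r>0$,
\[
\osc_{B_r(x_0)}u \le C r^{(1-a)/2};
\]
the case of interior points $x_0\in B_{1/2}\setminus B_{1/2}'$ will then follow from an interior $a$-harmonic estimate (Lemma~\ref{lem:a-harm-holder}) combined with the boundary case, using that $u$ is $a$-harmonic off the thin space.

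Fix such an $x_0,r$ and let $v$ be the $a$-harmonic replacement of $u$ on $B_r(x_0)$. The function $v$ is even in $x_n$, bounded by $M$ by maximum principle, and by Lemma~\ref{lem:a-harm-holder} (applied after rescaling $B_r(x_0)$ to $B_1$) satisfies the scale-invariant decay
\[
\osc_{B_\rho(x_0)}v \le C\left(\frac{\rho}{r}\right)^{\alpha_0}\osc_{B_r(x_0)}v,\qquad 0<\rho\le r/2,
\]
for some $\alpha_0 = \alpha_0(n,a)>0$. Testing the variational identity of Lemma~\ref{lem:normalbound} with the admissible function $\psi = u - v \in W^{1,2}_0(B_r(x_0),|x_n|^a)$ (extended by zero to $B_1$) and using the $a$-harmonicity of $v$, I obtain
\[
\int_{B_r(x_0)}|x_n|^a|\nabla(u-v)|^2\,dx \le C\int_{B_r'(x_0)}|u-v|\,dx',
\]
and combining Cauchy--Schwarz with the weighted trace--Poincar\'e inequality $\|w\|_{L^2(B_r'(x_0))}^2 \le C r^{1-a}\int_{B_r(x_0)}|x_n|^a|\nabla w|^2$ for $w\in W^{1,2}_0(B_r(x_0),|x_n|^a)$ (scaled from the unit-ball trace inequality noted in Section~\ref{sec:notation}), one absorbs to obtain the Morrey-type energy bound
\[
\int_{B_r(x_0)}|x_n|^a|\nabla(u-v)|^2\,dx \le C r^{n-a}.
\]

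The main obstacle is upgrading this energy estimate to the pointwise bound $\|u-v\|_{L^\infty(B_{r/2}(x_0))} \le C r^{(1-a)/2}$. My approach is to exploit the explicit barrier $|x_n|^{1-a}$: with $c := \max\lambda_\pm/(1-a)$, the functions $u + c|x_n|^{1-a}$ and $u - c|x_n|^{1-a}$ are, respectively, $L_a$-sub- and $L_a$-superharmonic on $B_r(x_0)$ (because on the thin space the flux $-\lim x_n^a\partial_{x_n}(u\pm c|x_n|^{1-a})$ is pushed to the correct sign by the choice of $c$, using Lemma~\ref{lem:normalbound}). Comparing each with its $a$-harmonic replacement, noting that $|x_n|^{1-a}$ has oscillation at most $Cr^{1-a}$ on $B_r(x_0)$, and running a weighted Moser iteration that leverages the energy bound just derived produces the desired sup-norm control. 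Once this is in hand, combining with the oscillation decay for $v$ yields
\[
\osc_{B_{r/2}(x_0)}u \le \osc_{B_{r/2}(x_0)}v + 2\|u-v\|_{L^\infty(B_{r/2}(x_0))} \le C2^{-\alpha_0}\osc_{B_r(x_0)}u + Cr^{(1-a)/2},
\]
so the classical iteration lemma gives $\osc_{B_r(x_0)}u \le C r^{\min(\alpha_0,(1-a)/2)}$. If $\alpha_0 < (1-a)/2$, a final bootstrap --- which uses that the now-known H\"older regularity of $u$ provides H\"older boundary data for $v$ on $\partial B_r(x_0)$ and then applies boundary regularity for $a$-harmonic functions --- raises the effective $\alpha_0$ to at least $(1-a)/2$ and closes the argument with the stated norm bound.
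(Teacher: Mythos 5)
You take a genuinely different route. The paper controls $u$ via a Morrey--Campanato estimate on the gradient: from $J_a$-minimality and the uniform bound on the competitor $v$ it gets $\int_{B_r}|\nabla u-\nabla v|^2|x_n|^a\le C r^{n-1}$, iterates to $\int_{B_r}|\nabla u|^2|x_n|^a\le Cr^{n-1}$, then applies H\"older and Morrey's embedding. You instead attempt a pointwise oscillation-decay scheme, which requires a sup-norm bound on $u-v$. Both are within the ``freezing by $a$-harmonic replacement'' framework, but they live at different function-space levels and close the argument differently.

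Two things in your plan are off target. First, the ``Moser iteration leveraging the energy bound'' is both unnecessary and vague. Once you observe, correctly, that $u+c|x_n|^{1-a}$ is $a$-subharmonic and $u-c|x_n|^{1-a}$ is $a$-superharmonic for $c=\max\{\lambda_\pm\}/(1-a)$ (the proof being exactly the weak-formulation computation as in Lemma~\ref{lem:upm-a-subharm}), the sup-norm bound is an immediate consequence of the comparison principle: $u+c|x_n|^{1-a}-v$ is $a$-subharmonic in $B_r(x_0)$ and equals $c|x_n|^{1-a}\le cr^{1-a}$ on $\partial B_r(x_0)$, so $u-v\le cr^{1-a}$ there; the symmetric argument gives the lower bound. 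This yields $\|u-v\|_{L^\infty(B_r(x_0))}\le cr^{1-a}$, which is sharper than the $r^{(1-a)/2}$ you aim for, and needs neither the energy bound from the trace--Poincar\'e step (which is correct, but plays no role here) nor a Moser iteration; the object $u-v$ is itself neither sub- nor super-$a$-harmonic, so running Moser on it is not well-posed.

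Second, and more seriously, the final iteration only closes if the interior H\"older exponent $\alpha_0$ of $a$-harmonic functions in Lemma~\ref{lem:a-harm-holder} satisfies $\alpha_0\ge(1-a)/2$, and the De Giorgi--Nash--Moser estimate quoted from \cite{FKS82} gives only ``some $\alpha_0>0$.'' Your proposed bootstrap does not repair this: $\osc_{B_\rho(x_0)}v\le C(\rho/r)^{\alpha_0}\osc_{B_r(x_0)}v$ is an \emph{interior} decay rate, and passing more regular boundary data on $\partial B_r(x_0)$ does not raise $\alpha_0$; that exponent is a fixed property of solutions of $L_a v=0$. What you actually need is the fact that even $a$-harmonic functions have interior exponent at least $\min(1,1-a)$ (which dominates $(1-a)/2$). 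That is true and can be derived, e.g., via Lemma~\ref{lem:thin-grad-est} together with the Liouville theorem (Lemma~\ref{lem:liouville}) or the frequency monotonicity as in Corollary~\ref{cor:weisscor}, but it is a substantive auxiliary fact that must be established rather than waved at. The paper's Morrey route sidesteps this quantitative dependence entirely, which is precisely why it is the cleaner choice here.
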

\begin{proof} The proof is almost identical with the proof of Theorem
  3.1 in \cite{a11}, we spell out the details below.

By Corollary~\ref{cor:loc-bdd}, we know that $u$ is bounded in
$B_{3/4}$. Then put
$$
C_0=4\max\{\lambda_+,\lambda_-\}\sup_{B_{3/4}}|u|.
$$
For $r\in (0,3/4)$, let $v$ be the function that is $a$-harmonic in
$B_r$ and equals $u$ on $\dd B_r$ (in other words, $v$ is the
$a$-harmonic replacement of $u$ on $B_r$). Then since $u$ minimizes $J_a$ on all balls inside $B_1$, there holds
\begin{equation}\label{eq:uv}
\int_{B_r}\left(|\nabla u|^2-|\nabla v|^2\right)|x_n|^a\leq 4\int_{B_r'}(\lambda_+v^++\lambda_-v^-)\leq C_0r^{n-1},
\end{equation}
due to the estimate
$$
\sup_{B_r} |v|\leq \sup_{B_{r}}|u|.
$$
Since $u=v$ on $\partial B_r$ we also have
$$
\int_{B_r}\nabla v\cdot (\nabla u-\nabla v) |x_n|^a
=0,
$$
which implies
$$
\int_{B_r}\left(|\nabla u|^2-|\nabla v|^2 \right)|x_n|^a=\int_{B_r}|\nabla u-\nabla v|^2 |x_n|^a,
$$
so that \eqref{eq:uv} becomes
$$
\int_{B_r}|\nabla u-\nabla v|^2 |x_n|^a\leq C_0r^{n-1}.
$$
This can now be iterated in a standard way (cf.\ Theorem 3.1 in \cite{a11}), to imply
$$
\int_{B_r}|\nabla u|^2 |x_n|^a \leq Cr^{n-1},
$$
for $0<r<3/4$, where $C$ might also depend on the
$W^{1,2}(B_{3/4},|x_n|^a)$-norm of $u$, which by the energy inequality
(see Corollary~\ref{cor:energy}) can be estimated
in terms of $L^2(B_1,|x_n|^a)$-norm of $u$. By H\"older's inequality this gives
$$
\int_{B_r}|\nabla u|\d x\leq \| |\nabla u|x_n^{a/2}\|_{L^2(B_r)}\|x_n^{-a/2}\|_{L^2(B_r)}\leq Cr^{n-(1+a)/2}.
$$
The estimate above is valid for all balls $B_r(x',0)$ centered at the
thin space, as long as they are contained in $B_{3/4}$. From arguments used in the proof of Morrey's embedding this yields the estimates
$$
\Big|u(x',0)-\dashint_{B_r(x',0)}u\Big|\leq Cr^s,\quad |u(x',0)-u(y',0)|\leq Cr^s,
$$
where $s=(1-a)/2$. Then, arguing as at the end of the proof of Theorem
3.1 in \cite{a11} we can conclude that $u$ is $s$-H\"older continuous in
$B_{1/2}$ with the required estimate on the $C^{0,s}$ norm.
\end{proof}

\begin{rem}\label{rem:holder-EL} The fact that $u$ is a minimizer of $J_a$ in $B_1$ can be
  changed with the following conditions:
\begin{enumerate}
\item $L_a u=0$ in $B_1^+$;
\item $|\lim_{x_n\to 0+} x_n^a u_{x_n}(x',x_n)|\leq \mu$ for $x'\in
  B_1'$ in the sense that
\[
\Big|\int_{B_1^+}(\nabla u\cdot\nabla \psi) x_n^a\Big|\leq
\mu\int_{B_1'}|\psi|,
\]
for any $\psi\in C^\infty_0(B_1)$ (cf.\ Lemma~\ref{lem:normalbound});
\item $|u|\leq M$ in $B_1^+$.
\end{enumerate}
Then we will have that
$$
\|u\|_{C^{0,s}(B_{1/2})}\leq C(M,\mu,n,a).
$$
Instead of \eqref{eq:uv} one will have to use
\begin{align*}
\Big|\int_{B_r}\nabla u(\nabla v-\nabla u)|x_n|^a\Big|&\leq 2\mu \Big|\int_{B_1'}|u-v|\Big|\\
&\leq C(M,\mu,n)r^{n-1}
\end{align*}
together with
$$
\int_{B_r}\nabla v(\nabla v-\nabla u)|x_n|^a=0.
$$
\end{rem}

We now turn to the $C^{1,\alpha}$ regularity along the thin space when
$a\in (0,1)$.

\begin{thm}[$C^{1,\alpha}$ regularity, $a<0$]   \label{thm:thinreg}
Let $u$ be  minimizer of \eqref{eq:jfunc-B1} in $B_1$ with $|u|\leq M$ in $B_1$. If $a\in (-1,0)$, then $u \in C^{1,\alpha}(B_{1}')$
for $\alpha< -a$. Moreover, 
\[
\| u \|_{C^{1, \alpha}(B_{1/2}')} \leq C(M,n, a,\alpha)(\|u\|_{L^2(B_1,|x_n|^a)}+\max\{\lambda_+,\lambda_-\})
\] 
\end{thm}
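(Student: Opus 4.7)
The proof is by a blow-up/compactness argument together with Liouville's theorem (Lemma~\ref{lem:liouville}). Assume for contradiction that the estimate fails: there exist minimizers $u_j$ of \eqref{eq:jfunc-B1} with uniformly bounded $M$, $\|u_j\|_{L^2(B_1,|x_n|^a)}$ and $\lambda_\pm$, points $x_j \in B_{1/2}'$, and radii $r_j \to 0$ such that
\[
N_j := r_j^{-(1+\alpha)} \inf_{L \text{ affine in } x'} \sup_{B_{r_j}'(x_j)} |u_j - L| \;\to\; \infty.
\]
By a standard worst-scale selection (replacing $r_j$ with the largest $\rho \in [r_j, 1/4]$ at which $M(\rho)/\rho^{1+\alpha}$ is close to its supremum), we may additionally arrange
\[
M(\rho) := \inf_L \sup_{B_\rho'(x_j)} |u_j - L| \leq 2 N_j \rho^{1+\alpha} \quad \text{for all } \rho \in [r_j, 1/4].
\]

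Let $L_j$ attain the infimum at scale $r_j$, set $M_j := N_j r_j^{1+\alpha}$, and define
\[
\tilde u_j(y) := \frac{u_j(x_j + r_j y) - L_j(x_j + r_j y)}{M_j}, \qquad y \in B_{1/(2r_j)}.
\]
Since $L_j$ is affine in $x'$ we have $L_a L_j = 0$, and hence $\tilde u_j$ is $a$-harmonic in the upper half-ball. The rescaled variational inequality (Lemma~\ref{lem:normalbound}) gives
\[
\Bigl|\lim_{y_n \to 0^+} y_n^a (\tilde u_j)_{y_n}\Bigr| \leq \frac{r_j^{1-a}\max\{\lambda_+,\lambda_-\}}{M_j} = \frac{r_j^{-a-\alpha}\max\{\lambda_+,\lambda_-\}}{N_j} \to 0,
\]
since $\alpha < -a$ and $N_j \to \infty$. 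By construction, $\sup_{B_1'}|\tilde u_j| = 1$, and the minimality of $L_j$ forces $\sup_{B_1'}|\tilde u_j - L| \geq 1$ for \emph{every} affine $L$. Moreover, the worst-scale bound translates to the flatness estimate $\inf_L \sup_{B_R'} |\tilde u_j - L| \leq 2 R^{1+\alpha}$ for $R \in [1, 1/(4r_j)]$, which by extending the best affine approximations pointwise yields polynomial growth $|\tilde u_j| \leq C R^{2+\alpha}$ on $B_R'$.

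By Remark~\ref{rem:holder-EL} and Lemma~\ref{lem:thin-grad-est}, together with the $a$-harmonicity of $\tilde u_j$ in the upper half-space and the pointwise bound on the thin space, the family $\tilde u_j$ is equicontinuous on compact subsets of $\R^n$; along a subsequence it converges locally uniformly to a limit $\tilde u_\infty: \R^n \to \R$. The vanishing Neumann data in the limit, combined with even symmetry in $y_n$, shows that $\tilde u_\infty$ is globally $a$-harmonic on $\R^n$ with polynomial growth. By Lemma~\ref{lem:liouville}, $\tilde u_\infty$ is a polynomial. If its degree were $d \geq 2$, then $\inf_L\sup_{B_R'}|\tilde u_\infty - L| \sim R^d$ for large $R$, contradicting the limiting flatness bound $2 R^{1+\alpha}$ with $1+\alpha < 2$. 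Hence $\tilde u_\infty$ is affine; even symmetry forces $\tilde u_\infty(y) = c + b \cdot y'$. Taking $L = c + b\cdot y'$ then gives $\sup_{B_1'}|\tilde u_\infty - L| = 0$, contradicting $\sup_{B_1'}|\tilde u_\infty - L| \geq 1$ inherited in the limit from the minimality of $L_j$.

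The main technical obstacle is ensuring that $\tilde u_\infty$ is globally $a$-harmonic \emph{across} the thin space: this relies on the rescaled Neumann data going to zero in the limit, which is precisely where the exponent hypothesis $\alpha < -a$ enters. A secondary difficulty is arranging uniform local $L^\infty$ bounds on $\tilde u_j$ from the thin-space bounds alone, using $a$-harmonicity in the upper half-space and the vanishing Neumann data, in order to justify equicontinuity and pass to the limit.
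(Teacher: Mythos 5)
Your proposal takes a genuinely different route from the paper. The paper's proof is direct and constructive: extend the Neumann data $f$ of Lemma~\ref{lem:normalbound} by zero, form its Riesz potential $I_{2s}f$, which lies in $C^{1,\alpha}(\R^{n-1})$ for $\alpha<-a$ by Silvestre's estimate, lift to an $a$-harmonic function $v$ by the Caffarelli--Silvestre extension, and observe that $w=u-v$ solves $L_a w=0$ across the thin space and is therefore $C^\infty$ in the tangential directions; the interior estimates for $v$ and $w$ then give the stated bound. Your proposal replaces this with a blow-up/compactness/Liouville scheme in the spirit of the growth estimates in Section~\ref{sec:optim-growth-regul}. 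Structurally the scheme is correct: the worst-scale selection, the rescaled Neumann bound $r_j^{-a-\alpha}\max\{\lambda_\pm\}/N_j\to 0$ (which is exactly where $\alpha<-a$ enters), the identification of the limit as an affine function via Lemma~\ref{lem:liouville} and the flatness bound, and the final contradiction with the optimality of $L_j$ all fit together.

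However, the step you flag as a ``secondary difficulty'' is in fact a genuine gap that your information does not close. The blow-up quantity $\inf_L\sup_{B_\rho'}|u_j-L|$ only records the behavior of $\tilde u_j$ on the thin space. To invoke Remark~\ref{rem:holder-EL}, Lemma~\ref{lem:thin-grad-est}, and local uniform convergence on compact subsets of $\R^n$, you need a uniform $L^\infty$ bound for $\tilde u_j$ in solid sets of the form $B_R^+$, and this is precisely what you do not have. The obstruction is not technical but structural: on $B_R'$ you control both $\tilde u_j$ and $y_n^a(\tilde u_j)_{y_n}$ (Cauchy data), but an $a$-harmonic function in $B_R^+$ is not controlled in the interior by its Cauchy data on the thin disk --- this is the classical ill-posedness of the elliptic Cauchy problem. (Already for $a=0$ and $n=2$, $v_k(x,y)=k^{-1}\sin(kx)\sinh(ky)$ has uniformly bounded Cauchy data on $\{y=0\}$ but blows up in any solid ball.) The original $L^\infty$ bound $|u_j|\le M$ does not transfer either, since after dividing by $M_j=N_j r_j^{1+\alpha}$, which may tend to $0$, no uniform bound survives. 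So equicontinuity on compact subsets of $\R^n$ cannot be derived from the data as set up, and without it $\tilde u_\infty$ is only defined on the thin space, where Lemma~\ref{lem:liouville} does not apply.

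A workable fix is to measure flatness in the \emph{solid} ball rather than the thin ball, e.g.\ to set up the contradiction with $\inf_L \sup_{B_\rho}|u_j-L|$ (or the corresponding weighted $L^2$-normalized version with exponent $\rho^{-(n+1+a)/2}$), so that the normalization $\sup_{B_1}|\tilde u_j|=1$ already yields uniform local $L^\infty$ bounds in the solid balls via the polynomial growth on $B_R$; since $B_\rho'\subset B_\rho$ this still implies the claimed $C^{1,\alpha}(B_1')$ bound. Alternatively, one can avoid compactness entirely and run the paper's decomposition argument, which sidesteps the issue because the Riesz potential $I_{2s}f$ is a global object built from the bounded Neumann data on all of $\R^{n-1}$, and the remainder $w=u-v$ inherits a uniform $L^\infty$ bound from $u$ and $v$ separately.
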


\begin{proof}
Let $f$ be defined as in Lemma \ref{lem:normalbound}. We extend $f$ to all of $\R^{n-1}$ by defining $f(y')=0$ if $y' \notin B_1$. 
Then $f \in L^{\infty}(\R^{n-1})$ with compact support, so we can use
the Riesz potential 
$$
I_{2s}f(x')=c_{n,s}\int_{\R^{n-1}}\frac{f(y')\d y'}{|x'-y'|^{n-2s}}
$$
as the inverse to the fractional Laplacian in the sense that $(-\Delta_{x'})^s I_{2s}f=f$. Since $a<0$, by
Proposition~2.9 in \cite{Sil07},
$I_{2s} f \in C^{1,\alpha}(\R^{n-1})$ for any $\alpha<-a$. We now use the extension
operator as in \cite{CS07} on $I_{2s} f$ to obtain an $a$-harmonic
function $v$ in the upper half plane with Dirichlet data $I_{2s} f$,
so  
\[
v|_{\R^{n-1}} = I_{2s} f.
\]
 We may evenly reflect $v$ across the thin space. If $w= u-v$, then for all $\psi \in C_0^{2}(B_1)$
\[
\int_{B_{1}'}{|x_n|^a \nabla w \nabla \psi} = 0.
\]
Then $L_a w =0$ in all of $B_1$. In particular, $w$ is  $C^{\infty}$
regular on the thin ball $B_1'$ in any $x'$ direction (cf. Lemma~\ref{lem:thin-grad-est}). Since $u=v+w$, it follows that 
\[
u \in C^{1, \alpha}(B_1')\quad\text{for } \alpha < -a.
\] 
The interior estimate on $u$ follows by the interior estimate on $v$
(see Proposition~2.9 in \cite{Sil07}) and for the $a$-harmonic
function $w$ (see Lemma~\ref{lem:thin-grad-est}). 
\end{proof}
\begin{rem}\label{rem:C1alpha-EL} Similarly to
  Remark~\ref{rem:holder-EL}, in Theorem~\ref{thm:thinreg} we may
  change the requirement that $u$ is a minimizer of $J_a$ in $B_1$
  with conditions (1)--(3) in Remark~\ref{rem:holder-EL}. Then, the
  conclusion will be that
$$
\|u\|_{C^{1,\alpha}(B_{1/2}')}\leq C(M,\mu,a,\alpha,n)
$$
for any $0<\alpha<-a$.

\end{rem}
\section{Optimal growth and regularity}\label{sec:optim-growth-regul}

Now we have all the tools needed to prove
the optimal growth of order $1-a$, which follows below.

\begin{thm}[Optimal growth]\label{thm:main1-growth} Let $u$ be a
  bounded minimizer of \eqref{eq:jfunc-B1}, with $|u|\leq M$ in $B_1$. Then the
  following holds.
\begin{enumerate}[\textup{(}i\/\textup{)}]
\item If $a\geq 0$, then 
$$
\sup_{B_r(x',0)}|u(y)-u(x',0)|\leq Cr^{1-a}
$$
for $0< r<1/2$ and $x'\in B_{1/2}'$, with $C=C(a,M,\lambda_\pm,n)$.
%Moreover, $u\in C^{0,1-a}(B_{1/2})$.

\smallskip
\item If $a<0$, then 
$$
\sup_{B_r(x',0)}|u(y)-u(x',0)-(y-(x',0))\cdot \nabla_{x'}u(x',0)|\leq
Cr^{1-a},
$$
for  $0<r<1/2$ and $x'\in B_{1/2}'$, with $C=C(a,M,\lambda_\pm,n)$. 
\end{enumerate}
\end{thm}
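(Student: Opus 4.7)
The plan is to use the Weiss-type monotonicity formula (Theorem~\ref{thm:weiss}) together with a blow-up/contradiction argument, with Corollary~\ref{cor:weisscor} providing the final contradiction.

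First I would reduce to the case of free boundary points $x_0 \in \Gamma^+ \cup \Gamma^-$ via a distance-to-free-boundary dichotomy. For $x_0 = (x_0',0) \in B_{1/2}'$, let $d := \dist(x_0, \Gamma^+ \cup \Gamma^-)$. When $r \ge d$, one picks a free boundary point $y_0$ with $|x_0 - y_0| \le d \le r$ and deduces the bound from the (to be established) estimate at $y_0$ applied on $B_{2r}(y_0) \supset B_r(x_0)$. When $r < d$, the function $u$ has a definite sign on $B_d(x_0)$, and Proposition~\ref{prop:noneg-one-phase} represents $u$ on $B_d(x_0)$ as $\pm\frac{\lambda_\pm}{1-a}x_n^{1-a}$ plus an $a$-harmonic function; the required growth then follows from the interior estimates in Lemmas~\ref{lem:a-harm-holder} and \ref{lem:thin-grad-est}.

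For the main argument, WLOG take $x_0 = 0 \in \Gamma^+ \cup \Gamma^-$, so that $u(0) = 0$. In the case $a < 0$ additionally set $\ell(y) := y' \cdot \nabla_{x'}u(0)$, which is well-defined thanks to Theorem~\ref{thm:thinreg}. Suppose the conclusion fails: then there exist $r_j \to 0$ with $c_j := M_j/r_j^{1-a} \to \infty$, where $M_j := \sup_{B_{r_j}}|u - \ell|$ (with $\ell \equiv 0$ if $a \ge 0$). Define the normalized blow-ups
\[
v_j(x) := \frac{(u - \ell)(r_j x)}{M_j}.
\]
Then $\sup_{B_1}|v_j| = 1$, $v_j(0) = 0$, and (when $a < 0$) $\nabla_{x'}v_j(0) = 0$. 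Since $\ell$ is $a$-harmonic with vanishing thin flux, $v_j$ solves $L_a v_j = 0$ in $B_{1/r_j}^+$, and its thin flux satisfies $|\lim_{x_n \to 0^+}x_n^a \partial_{x_n}v_j| \le \max\{\lambda_+, \lambda_-\}/c_j \to 0$. Applying Remarks~\ref{rem:holder-EL} and \ref{rem:C1alpha-EL} on a slightly larger ball yields uniform $C^{0,(1-a)/2}$ (respectively $C^{1,\alpha}_{x'}$ when $a<0$) bounds on $\overline{B_1}$; along a subsequence, $v_j \to v_\infty$ uniformly on $\overline{B_1}$, where $v_\infty$ is $a$-harmonic in $B_1$ (even in $x_n$), $\sup_{B_1}|v_\infty| = 1$, $v_\infty(0) = 0$, and $\nabla_{x'}v_\infty(0) = 0$ when $a < 0$.

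Next I would expand the Weiss functional $W(r_j, u) \le W(1, u) \le C$ in terms of $v_j$ and $\ell$, carefully tracking the lower-order contributions of the linear correction. Dividing by $c_j^2$ and letting $j \to \infty$ (using uniform convergence on $\partial B_1$ for the boundary term, together with weak-$L^2(|x_n|^a)$ convergence of $\nabla v_j$ and the energy bound of Corollary~\ref{cor:energy} for the Dirichlet term), one arrives at the limit inequality
\[
\int_{B_1}|\nabla v_\infty|^2 |x_n|^a \;\le\; (1-a)\int_{\partial B_1} v_\infty^2 |x_n|^a.
\]
Corollary~\ref{cor:weisscor} applied to $v_\infty$ supplies the reverse inequality with constant $C(a) = 1$ when $a \ge 0$ and $C(a) = 2$ when $a < 0$. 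Since $C(a) > 1-a$ for every $a \in (-1,0) \cup (0,1)$, this forces $v_\infty \equiv 0$ on $\partial B_1$, hence $v_\infty \equiv 0$ on $B_1$ by the maximum principle for $a$-harmonic functions, contradicting $\sup_{B_1}|v_\infty| = 1$.

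The main obstacle is the borderline case $a = 0$, where $C(a) = 1 = 1-a$ and the Weiss/Corollary~\ref{cor:weisscor} argument degenerates. For $a = 0$ one must invoke the Alt--Caffarelli--Friedman monotonicity formula (Lemma~\ref{lem:ACF}) applied to the subharmonic pair $(u^+, u^-)$ (Lemma~\ref{lem:upm-a-subharm}) to obtain the Lipschitz bound at two-phase free boundary points, and use Proposition~\ref{prop:noneg-one-phase} together with the known optimal regularity of the one-phase thin obstacle problem at one-phase free boundary points. A secondary technical issue is the bookkeeping when $a < 0$: subtracting the linear $\ell$ destroys the minimizer property of $v_j$, so the algebra relating Weiss for $u$ to the limit inequality for $v_\infty$ requires careful handling of cross-terms involving $\ell$, which, fortunately, turn out to be lower order in $c_j$.
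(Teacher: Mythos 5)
Your argument takes a genuinely different route from the paper. The paper proves the optimal growth by a dyadic-scale normalization: the radii $r_j$ are selected so that the rescalings satisfy $\sup_{B_{2^k}}|v_j|\leq 2^{k(1-a)}$ for all admissible $k$, which lets one pass to a \emph{global} limit $v_0$ with controlled growth on all of $\R^n$ and then invoke the Liouville theorem (Lemma~\ref{lem:liouville}). You instead stay on a fixed ball $B_1$ and try to produce the contradiction from $W(r_j,u)/c_j^2\to 0$ combined with Corollary~\ref{cor:weisscor}. For $a>0$ this works cleanly and is arguably more elegant than the paper's dyadic argument, and your identification of $a=0$ as a degenerate case handled via the ACF monotonicity formula matches the paper's Subcase i.b in spirit.

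However, there is a genuine gap in the case $a<0$, precisely in the step you dismiss as a "secondary technical issue." Write $\ell(y)=y'\cdot p$ with $p=\nabla_{x'}u(0)$, $M_j=\sup_{B_{r_j}}|u-\ell|$, $c_j=M_j/r_j^{1-a}$. Expanding $W(r_j,u)$ with $u=M_j v_j(\cdot/r_j)+\ell$, changing variables, using the Almgren identity $\int_{B_1}|\nabla\ell|^2|y_n|^a=\int_{\partial B_1}\ell^2|y_n|^a$ (degree-$1$ homogeneity), and completing the square, one finds
\[
\frac{W(r_j,u)}{c_j^2}
=\int_{B_1}|\nabla v_j|^2|y_n|^a-\int_{\partial B_1}v_j^2|y_n|^a
+a\int_{\partial B_1}\big(v_j+\beta_j\,\ell\big)^2|y_n|^a
+\frac{1}{c_j^2 r_j^{n-a}}\cdot 4\!\int_{B_{r_j}'}\!(\lambda_+u^++\lambda_-u^-),
\]
where $\beta_j:=r_j/M_j=r_j^a/c_j$. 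The coefficient $\beta_j$ is \emph{not} lower order: for $a<0$ both $r_j^a\to\infty$ and $c_j\to\infty$, so $\beta_j$ is of the form $\infty/\infty$. The contradiction hypothesis $c_j\to\infty$ only gives $M_j\gg r_j^{1-a}$, and since $r_j^{1-a}\ll r_j$ when $a<0$ this is compatible with $M_j\lesssim r_j$, i.e., with $\beta_j$ bounded away from $0$ or even $\beta_j\to\infty$. Passing to the limit with $\beta_j\to\beta$ and applying Corollary~\ref{cor:weisscor} (with $C(a)=2$) yields only
\[
\int_{\partial B_1}v_\infty^2|y_n|^a+a\int_{\partial B_1}\big(v_\infty+\beta\ell\big)^2|y_n|^a\leq 0,
\]
which for $a\in(-1,0)$ is \emph{not} a contradiction when $\beta\neq 0$: the second, negative term can absorb the first even if $v_\infty\not\equiv 0$. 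So the claim that the cross terms "turn out to be lower order in $c_j$" is unjustified, and the argument does not close for $a<0$. This is precisely why the paper's Lemma~\ref{lem:geneous} imposes the extra hypothesis $\nabla_{x'}u(0)=0$ whenever it applies Weiss with $a<0$, and why the paper's proof of Theorem~\ref{thm:main1-growth} in Case~ii avoids Weiss entirely and instead uses the dyadic normalization plus Liouville, which is insensitive to the value of $p$.

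A secondary point: the theorem asserts a constant $C$ depending only on $(a,M,\lambda_\pm,n)$, uniform over all minimizers with $|u|\leq M$ and over all base points in $B_{1/2}'$. Your contradiction setup fixes a single $u$ and $x_0=0$; to obtain the stated uniformity one must, as the paper does, take sequences of minimizers $u_j$ and base points $x_j'$ into the blow-up. Also, in your dichotomy the case $r\geq d$ for $a<0$ requires comparing $\nabla_{x'}u(x_0)$ with $\nabla_{x'}u(y_0)$; the available $C^{1,\alpha}$ estimate from Theorem~\ref{thm:thinreg} only holds for $\alpha<-a$, which gives $|\nabla_{x'}u(x_0)-\nabla_{x'}u(y_0)|\lesssim d^\alpha$ rather than $d^{-a}$, so the reduction step itself needs additional care there.
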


\begin{proof}
\smallskip\noindent {\bf Case i: $a\geq 0$.} For $0<r<1/2$ and $x'\in B_{1/2}'$ let
$$
S_r(u,x') = \sup_{B_r}|u-u(x',0)|.
$$ 
If the assertion does not hold then we can find a sequence of
minimizers $u_j$, points $x_j'\in B_{1/2}'$ and radii $r_j\to 0$ such that
$$
\frac{S_{r_j}(u_j,x_j')}{r_j^{1-a}}=C_j\to \infty, 
$$
and
$$
S_{r_j2^k}(u_j,x_j')\leq 2^{k(1-a)}S_{r_j}(u_j,x_j'),
$$
for all nonnegative integers $k$ such that $2^kr_j\leq 1$. Let now
$$
v_j(x)=\frac{u_j(r_jx+(x'_j,0))-u_j(x'_j,0)}{C_j {r_j}^{1-a}}.
$$
Then 
\begin{enumerate}
\item $v_j$ is even in $x_n$-variable
\item $v_j$ satisfies
$L_a v_j=0$ in $B_R^+$
for $R<1/r_j$;

\item For any $\psi\in C^\infty_0(B_R)$
$$
\int_{B_R^+} (\nabla v_j\cdot\nabla \psi)x_n^a\leq \frac{\max\{\lambda_+,\lambda_-\}}{C_j}\int_{B_R'}|\psi|;
$$
\item $ \sup_{B_{2^k}}|v_j|\leq 2^{k(1-a)}$, whenever $2^kr_j\leq 1$;
\item $v_j(0)=0$; 
\item $\sup_{B_1} |v_j|=1$.
\end{enumerate}
By Theorem~\ref{thm:holder} and Remark~\ref{rem:holder-EL}, $v_j$ is uniformly bounded in
$C^{0,s}(B_R)$ for all $R<1/r_j$.  Passing to the limit, we obtain
that $v_j$ converges (over a subsequence) to a function $v_0\in
C^{0,s}(\R^n)$ which satisfies
$$
\int_{\R^n}(\nabla v_0\cdot\nabla \psi)|x_n|^a=0
$$
for any $\psi\in C^\infty_0(\R^n)$. In other words, $v_0$ is
$a$-harmonic in $\R^n$, even in $x_n$-variable. Besides, we will have that
$$
|v_0(x)|\leq C(1+|x|^{1-a}),\quad v_0(0)=0,\quad \sup_{B_1} |v_0|=1.
$$
The Liouville-type theorem in Lemma \ref{lem:liouville} now implies implies that $v_0$ is a
polynomial. 

At this point we need to split the proof into two different cases depending on $a$. 

\smallskip
\noindent {\bf Subcase i.a: $a>0$.} Since the growth at infinity is
less than linear $v_0$ must be constant and thus $v_0\equiv 0$, which
is clearly a contradiction with the properties of $v_0$ above.

\smallskip
\noindent {\bf Subcase i.b: $a=0$.} Again, due to the growth condition,
we can deduce that $v_0$ is linear. To get a contradiction, we intend
to use the ACF monotonicity formula, but in order to do so, we need to make
the following observation. For the sequences $u_j$, $x_j'$ and $r_j$
in our contradictory assumption, without loss of generality we may
assume that the following holds:
\begin{itemize}
\item[($\alpha$)] either $u_j(x_j',0)=0$ for all $j$, 
\item[($\beta$)] $B_{r_j}'(x_j')\subset\{u_j>0\}$ for all $j$, or 
\item[($\gamma$)] $B_{r_j}'(x_j')\subset\{u_j<0\}$ for all $j$.
\end{itemize}
Indeed, in the case when $x_j'\not \in\Gamma_{u_j}$ and
$r_j<\dist(x_j',\Gamma_{u_j})$, we may simply replace $x_j'$ with its
closest point $y_j'\in \Gamma_{u_j}$ and $r_j$ with $2r_j$ and convert
it to case ($\alpha$).

Cases ($\alpha$)--($\gamma$) translate into the following ones for functions $v_j$.
\begin{itemize}
\item[($\alpha'$)] $v_j(0,0)=0$ for all $j$,
\item[($\beta'$)] $v_j>0$ in $B_1'$ for all $j$,
\item[($\gamma'$)] $v_j<0$ in $B_1'$ for all $j$.
\end{itemize}

In the cases ($\beta'$) and ($\gamma'$) we obtain immediately that the limit
$v_0$ has a sign in $B_1'$. Since we also know that $v_0$ is linear
and even in $x_n$, this immediately implies that $v_0$ must be
identically zero, a contradiction with property (6) of $v_j$. In the
case ($\alpha'$), which is the same as ($\alpha$), we observe that we have the
additional property that 
$$
\Delta (u_j^\pm)\geq 0\quad\text{in } B_1,
$$
see Lemma~\ref{lem:upm-a-subharm}.
Then we can apply the ACF monotonicity formula
(Lemma~\ref{lem:ACF}). We will have that for any $s>0$ and large $j$
\begin{align*}
\Phi(s,v_j^+,v_j^-)&=\frac{1}{C_j^2}\Phi^{x_j}(sr_j,u_j^+,u_j^-)\\
&\leq
\frac{1}{C_j^2}\Phi^{x_j}(1/4,u_j^+,u_j^-)\leq
\frac{C(n)}{C_j^2}\|u_j\|_{L^2(B_1)}^2\to 0.
\end{align*}
Hence, $\Phi(s,v_0^+,v_0^-)=0$ for $s>0$, and we can conclude that
either $v_0^+=0$ or $v_0^-=0$. Since $v_0$ is linear and even in
$x_n$, we again conclude that $v_0$ is identically zero, which
contradicts property (6) of $v_j$.

\smallskip
\noindent {\bf Case ii: $a<0$.} Here we define instead
$$
S_r(u,x') = \sup_{y\in B_r(x',0)}|u(y)-u(x',0)-y'\cdot \nabla_{x'}u(x',0)|.
$$ 
If the assertion does not hold then we can find a sequence of
minimizers $u_j$, points $x_j'\in B_{1/2}'$ and a radii $r_j\to 0$ such that
$$
\frac{S_{r_j}(u_j,x_j')}{r_j^{1-a}}=C_j\to \infty, 
$$
and
$$
S_{r_j2^k}(u_j,x_j')\leq 2^{k(1-a)}S_{r_j}(u_j, x_j'),
$$
for all nonnegative integers $k$ such that $2^kr_j\leq 1$. Let now
$$
v_j(x)=\frac{u_j(r_jx+(x',0))-u_j(x',0)-(r_j x')\cdot \nabla_{x'}u_j(x_j',0)}{C_j{r_j}^{1-a}}.
$$
Then, again as before,
\begin{enumerate}
\item  $v_j$ is even in $x_n$-variable
\item $v_j$ satisfies
$L_a v_j=0$ in $B_R^+$
for $R<1/r_j$;

\item For any $\psi\in C^\infty_0(B_R)$
$$
\int_{B_R^+} (\nabla v_j\cdot\nabla \psi)x_n^a\leq \frac{\max\{\lambda_+,\lambda_-\}}{C_j}\int_{B_R'}|\psi|;
$$
\item $ \sup_{B_{2^k}}|v_j|\leq 2^{k(1-a)}$, whenever $2^kr_j\leq 1$;
\item $v_j(0)=|\nabla_{x'}v_j(0)|=0$; 
\item $\sup_{B_1} |v_j|=1$.
\end{enumerate}
By Theorem~\ref{thm:holder}, Remark~\ref{rem:holder-EL}, Theorem~\ref{thm:thinreg} and Remark~\ref{rem:C1alpha-EL}, there is a subsequence, again labelled
$v_j$, converging in $C^{0,s}_\textup{loc}\cap C^{1,\alpha}_\textup{loc}(\R^{n-1}\times 0)$ and then by (3) above also weakly in $W^{1,2}_\textup{loc}(|x_n|^a)$ to a limit function
$v_0$. By passing to the limit in (3) and (4) above, we see that

$$
L_a v_0=0\quad\text{in }\R^n
$$
and
$$
|v_0(x)|\leq C(1+|x|^{1-a}),\quad v_0(0)=|\nabla_{x'}v_0(0)|=0,\quad
\sup_{B_1}|v_0|=1.
$$
Since $v_0$ is also even in $x_n$ variable, Lemma \ref{lem:liouville} now implies that $v_0$ is a
polynomial. Since the growth at infinity is less than quadratic, $v_0$
must be linear. The properties of $v_0$ above imply that $v_0\equiv 0$, a contradiction.
\end{proof}

\begin{rem}\label{rem:optgrowth-EL} As in Remark \ref{rem:holder-EL} and \ref{rem:C1alpha-EL}, the assertions of Theorem \ref{thm:main1-growth} hold true when the requirement that $u$ is a minimizer of $J_a$ in $B_1$ is replaced by conditions (1)--(2) in Remark \ref{rem:holder-EL}, for $a\neq 0$. The only change in the conclusion would be that now $C=C(a,M,\mu,n)$. The result is not true for $a=0$ as the example $u=\Re z\ln z$ shows. The lacking information is the fact that for minimizers, $u^\pm$ are subsolutions.
\end{rem}

With the proposition above, we can now prove the optimal
regularity. More precisely, we prove the following version of Theorem
\ref{thm:main-opt-reg}. 

\begin{thm}[Optimal regularity]\label{thm:main1-expanded} Let $u$ be a
  bounded minimizer of \eqref{eq:jfunc-B1}, with $0\in
  \Gamma^+\cup \Gamma^-$ and $|u|\leq M$ in $B_1$. Then the
  following holds.
\begin{enumerate}[\textup{(}i\/\textup{)}]
\item If $a\geq 0$, then $u\in C^{0,1-a}(B_{1/4})$ with 
$$
\|u\|_{C^{0,1-a}(B_{1/4})}\leq C(a,M,\lambda_\pm,n).
$$
\smallskip
\item If $a<0$, then $u\in C^{1,-a}(B_{1/4}^\pm\cup B_{1/4}')$ with  
$$
\|u\|_{C^{1,-a}(B_{1/4}^\pm\cup B_{1/4}')}\leq C(a,M,\lambda_\pm,n).
$$
\end{enumerate}
\end{thm}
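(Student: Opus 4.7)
The plan is to deduce optimal regularity from the optimal growth at every thin-space point (Theorem~\ref{thm:main1-growth}) by combining it with interior Schauder theory for the $a$-harmonic equation on the half-spaces $\{x_n>0\}$ and $\{x_n<0\}$. The key observation is that on a ball $B_r(x_0)$ centered at $x_0=(x_0',d)$ with $r\leq d/2$, the weight $|x_n|^a$ is smooth and bounded above and below, so $L_a u = 0$ reduces to the uniformly elliptic equation $\Delta u + (a/y_n)u_{y_n}=0$ with ellipticity constants depending only on $a$. Also, since $0\in\Gamma^+\cup\Gamma^-$, continuity of $u$ gives $u(0)=0$, so the growth estimate applied at $\bar 0 = (0',0)$ bounds $\|u\|_{L^\infty(B_{1/4})}$ in terms of $C(a,M,\lambda_\pm,n)$.

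\textbf{Case (i), $a\geq 0$.} For $x,y\in B_{1/4}$, WLOG set $d:=|x_n|\leq |y_n|$ and split on whether $|x-y|$ is larger or smaller than $d/4$. If $|x-y|\geq d/4$, let $\bar x=(x',0)$ and $\bar y=(y',0)$ and apply Theorem~\ref{thm:main1-growth}(i) three times to estimate each of $|u(x)-u(\bar x)|$, $|u(\bar x)-u(\bar y)|$, $|u(\bar y)-u(y)|$ separately by $C|x-y|^{1-a}$; the triangle inequality then gives the desired bound. If $|x-y|<d/4$, then $B_{d/2}(x)$ is strictly away from the thin space, so the growth estimate yields $\sup_{B_{d/2}(x)}|u-u(\bar x)|\leq Cd^{1-a}$, and interior gradient estimates for the uniformly elliptic equation give $|\nabla u|(x)\leq Cd^{-a}$, hence
\[
|u(x)-u(y)|\leq Cd^{-a}|x-y|=C\bigl(|x-y|/d\bigr)^a|x-y|^{1-a}\leq C|x-y|^{1-a},
\]
where the last step uses $a\geq 0$ and $|x-y|/d<1$.

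\textbf{Case (ii), $a<0$.} The argument is analogous, with first-order corrections in place of zeroth-order ones. For $x\in B_{1/4}^+$ with $x_n=d>0$ and $\bar x=(x',0)$, define
\[
v(y):=u(y)-u(\bar x)-(y'-x')\cdot\nabla_{x'}u(\bar x).
\]
Since the subtracted term is linear in $y'$ and hence $a$-harmonic, $L_a v=0$ in $B_{d/2}(x)$. Theorem~\ref{thm:main1-growth}(ii) gives $\sup_{B_{d/2}(x)}|v|\leq Cd^{1-a}$, so interior Schauder estimates for the uniformly elliptic equation on $B_{d/2}(x)$ yield the scale-invariant seminorm bound
\[
[u]_{C^{1,-a}(B_{d/4}(x))}=[v]_{C^{1,-a}(B_{d/4}(x))}\leq\frac{C}{d^{1-a}}\sup_{B_{d/2}(x)}|v|\leq C.
\]
This uniform local estimate, combined with the $C^{1,-a}$ regularity of $u|_{B_{1/4}'}$ (obtained by polarizing Theorem~\ref{thm:main1-growth}(ii) to improve the exponent in Theorem~\ref{thm:thinreg}), is then patched into a global estimate on $B_{1/4}^+\cup B_{1/4}'$. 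For two points $y_1,y_2$, either they lie in a common interior Schauder ball and we apply the local seminorm directly, or we route through the thin-space projections $\bar y_i$ via a dyadic chain of Schauder balls along the segments $\{y_i'\}\times(0,y_{i,n}]$, whose Hölder oscillations sum geometrically thanks to the uniformity of the local bound.

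\textbf{Main obstacle.} The essential technical point is the patching step in Case~(ii): one must verify that the scale-invariant interior bound and the thin-space $C^{1,-a}$ estimate combine compatibly along a dyadic chain of balls descending to the thin space, and that $\nabla u$ extends continuously up to the thin space with limiting values $(\nabla_{x'}u(\bar x),0)$. This works precisely because the exponent $1-a$ in the growth estimate and the exponent $-a$ in the Hölder seminorm match the natural scaling of $L_a$, so no loss occurs when iterating the local Schauder estimate.
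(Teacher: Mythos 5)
Your strategy is the same as the paper's at the macroscopic level -- use the optimal growth estimate from Theorem~\ref{thm:main1-growth}, convert it into weighted gradient (resp.\ second-derivative) bounds of order $x_n^{-a}$ (resp.\ $x_n^{-1-a}$) on balls $B_{cx_n}(x)$ away from the thin space, and then close with a two-case dichotomy on whether $|x-y|$ is comparable to $x_n$ -- but the implementation differs in one place that is worth flagging. For Case~(i), the paper deliberately avoids invoking full interior gradient estimates for the rescaled operator: it controls $|\nabla_{x'}u|\leq Cx_n^{-a}$ via the purely tangential estimate Lemma~\ref{lem:thin-grad-est}, and then proves $|u_{x_n}|\leq Cx_n^{-a}$ by a separate argument, showing that $\tilde u = x_n^a u_{x_n}$ is bounded: $\tilde u$ is $(-a)$-harmonic and bounded on $B_{1/2}'$ by Lemma~\ref{lem:normalbound}, and one splits $\tilde u = v + w$ with $v$ a Poisson-extension and $w$ an odd $(-a)$-harmonic function controlled by the Caccioppoli estimate. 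You instead observe that on $B_{x_n/2}(x)$ the rescaled equation $\Delta\tilde u + b(z)\tilde u_{z_n}=0$ has a drift $b$ that is bounded (with bounded derivatives) uniformly in the scale $x_n$, so standard interior Schauder applied to $u-u(\bar x)$ gives the full bound $|\nabla u(x)|\leq Cx_n^{-a}$ in one stroke. This is a genuine streamlining and it is correct; the only thing you sacrifice is the structural information that $x_n^a u_{x_n}$ has a bounded $(-a)$-harmonic extension, which the paper happens to obtain as a by-product. For Case~(ii), your function $v$ is exactly the paper's $w$, and the dyadic-chain patching is an equivalent packaging of the paper's ``integrate $|D^2u|\leq Cx_n^{-1-a}$'' dichotomy (both rely on $-1-a>-1$, i.e.\ integrability of the second-derivative bound in $x_n$). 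The one step you label in passing -- ``polarizing Theorem~\ref{thm:main1-growth}(ii)'' to get $\nabla_{x'}u\in C^{0,-a}(B_{1/4}')$ -- does require a short Campanato-type argument (compare the two affine approximations at $\bar x$ and $\bar y$ on a ball of radius $\approx|\bar x-\bar y|$ to extract $|\nabla_{x'}u(\bar x)-\nabla_{x'}u(\bar y)|\leq C|\bar x-\bar y|^{-a}$), which you should write out; the paper reaches the same conclusion implicitly by routing through the interior second-derivative bounds, which again avoids a separate Campanato lemma.
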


\begin{proof} \textbf{Case i: $a\geq 0$.} First, form the growth estimate in
Theorem~\ref{thm:main1-growth} and
Lemma~\ref{lem:thin-grad-est} applied in the ball $B_{x_n}(x',x_n)$,
we obtain that
\begin{equation}\label{eq:ux'-est}
|\nabla_{x'}u(x)|\leq C x_n^{-a},\quad\text{for }x\in B_{1/4}^+,
\end{equation}
where $C=C(a,M,\lambda_\pm, n)$. Besides, we claim that a similar
estimate holds also in $x_n$-direction:
\begin{equation}\label{eq:uxn-est}
|u_{x_n}(x)|\leq C x_n^{-a},\quad\text{for }x\in B_{1/4}^+.
\end{equation}
To show this estimate, we note that from Lemma \ref{lem:normalbound}, we know that the function $\tilde u=x_n^a
u_{x_n}$ is uniformly bounded in $B_{1/2}'$, with the bound depending
on $\lambda^\pm$. Moreover, it is known that $L_{-a}\tilde u=0$ in
$B_{1}^+$ (see for instance \cite{CS07} for an explanation). Next, let $v$ be the bounded
solution of
$$
L_{-a} v=0\quad\text{in }\R^n_+,\qquad v(x',0)=\tilde u(x',0)\zeta(x'),\quad x'\in\R^{n-1},
$$
with some cutoff function $\zeta\in C^\infty_0(B_1')$ which equals $1$
on $B_{1/2}'$. Then, by using the Poisson formula (see \cite{CS07}),
we will have
$$
\|v\|_{L^\infty(B_{1/2}^+)}\leq C\|\tilde u\zeta \|_{L^\infty(B_{1/2}')}\leq C(a,M,\lambda^\pm,n).
$$
Furthermore, let $w=u-v$ and extend $w$ in an odd manner across the
thin space. Then, $w$ is $(-a)$-harmonic in $B_{1/2}$. Hence, from
Lemma~\ref{lem:upm-a-subharm} and Corollaries~\ref{cor:energy} and
\ref{cor:loc-bdd}, we have
\begin{align*}
\|w\|_{L^\infty(B_{1/2}^+)}&\leq C\|w\|_{L^2(B_{3/4},|x_n|^{-a})}\\
&= C\|\nabla u\|_{L^2(B_{3/4},|x_n|^a)}\leq C\sup_{B_1} |u|=C M.
\end{align*}
Combining the estimates for $v$ and $w$, we obtain a similar estimate
for $\tilde u$, which implies \eqref{eq:uxn-est}. 

With estimates \eqref{eq:ux'-est}--\eqref{eq:uxn-est}, we can now
prove the H\"older continuity of $u$. Let $x, y\in B_{1/4}^+\cup B_{1/4}'$ and
without loss of generality assume that $x_n\leq y_n$. Consider the
following possibilities:
\begin{enumerate}
\item $|x-y|\geq y_n/2$. In this case, by
  Theorem~\ref{thm:main1-growth},  we have
\begin{align*}
|u(x)-u(y)|&\leq |u(x',0)-u(y',0)|+|u(x)-u(x',0)|+|u(y)-u(y',0)|\\
&\leq
C|x'-y'|^{1-a}+Cx_n^{1-a}+C y_n^{1-a}\leq C|x-y|^{1-a}.
\end{align*}

\item $|x-y|< y_n/2$. In this case, $x,y\in B_{y_n/2}(y)$ and we have
  that by \eqref{eq:ux'-est}--\eqref{eq:uxn-est}
$$
|\nabla u|\leq Cy_n^{-a}\quad\text{in } B_{y_n/2}(y). 
$$
Hence
$$
|u(x)-u(y)|\leq C y_n^{-a}|x-y|\leq C|x-y|^{1-a}.
$$
\end{enumerate}
Thus, we obtain the desired H\"older continuity of $u$ in
$B_{1/4}^+\cup B_{1/4}'$. By symmetry in $x_n$ variable, we therefore
have the same H\"older continuity in $B_{1/4}$.

\medskip\noindent
\textbf{Case ii: $a<0$.} First, note that the estimate \eqref{eq:uxn-est} still holds in this
case. To prove the desired $C^{1,-a}$ regularity we will obtain similar estimates for the second derivatives.

To this end, take a point $x=(x',x_n)\in B_{1/4}^+$. Then $\tilde u=x_n^a u_{x_n}$ is $(-a)$-harmonic in
$B_{x_n}(x)$ and applying Lemma~\ref{lem:thin-grad-est} we obtain
\begin{equation}\label{eq:ux'xn-est}
|\nabla_{x'} u_{x_n}(x)|\leq C x_n^{-1-a}\quad\text{for }x\in B_{1/4}^+.
\end{equation}
Further, using that
$w(y)=u(y)-u(x',0)-(y-(x',0))\cdot\nabla_{x'} u(x',0) $, is
$a$-harmonic in $B_{x_n}(x)$, we obtain by
Lemma~\ref{lem:thin-grad-est} 
\begin{equation}\label{eq:ux'x'-est}
|D^2_{x'}u(x)|=|D^2_{x'} w(x)|\leq \frac{C}{x_n^2} \sup_{B_{{x_n}/{2}}(x)} |w|\leq C{x_n^{-1-a}},\quad\text{for }x\in B_{1/4}^+,
\end{equation}
where the last inequality follows from the optimal growth estimate in
Theorem~\ref{thm:main1-growth}. Now, using the equation for $u$, we conclude that 
\begin{equation}\label{eq:uxnxn-est}
|u_{x_nx_n}(x)|=|-\lap_{x'} u(x)-a x_n^{-1}u_{x_n}(x)|\leq
C{x_n^{-1-a}},\quad\text{for }x\in B_{1/4}^+.
\end{equation}
Combining \eqref{eq:ux'xn-est}--\eqref{eq:uxnxn-est}, we arrive at
\begin{equation}
  \label{eq:D2u-est}
  |D^2u(x)|\leq Cx_n^{-1-a},\quad\text{for }x\in B_{1/4}^+.
\end{equation}
Now taking $x,y\in B_{1/4}^+\cup B_{1/4}'$ and repeating the the
arguments as at the end of Case i, we readily conclude that
$$
|\nabla u(x)-\nabla u(y)|\leq C |x-y|^{-a}.
$$
(In the estimate of $u_{x_n}$ we use that $u_{x_n}$ is identically
zero on $B_{1/2}'$, by estimate \eqref{eq:uxn-est}.) Furthermore,
using Theorem~\ref{thm:thinreg}, integrating \eqref{eq:ux'xn-est} from
$(x',0)$ to $(x',x_n)$, and using \eqref{eq:uxn-est} one more time, we
also establish that
$$
|\nabla u|\leq C\quad\text{for }x\in B_{1/4}^+.
$$
Hence,
$$
\|u\|_{C^{1,-a}(B_{1/4}^+\cup B_{1/4}')}\leq C(a,M,\lambda^\pm,n), 
$$
as required.
\end{proof}

The proof of Theorem~\ref{thm:main-opt-reg} is now immediate.

\begin{proof}[Proof of Theorem~\ref{thm:main-opt-reg}] The proof
  follows from the local boundedness (see 
  Corollary~\ref{cor:loc-bdd}), Theorem~\ref{thm:main1-expanded}, and
  a simple covering argument by balls. 
\end{proof}

From Remarks \ref{rem:holder-EL}, \ref{rem:C1alpha-EL} and \ref{rem:optgrowth-EL}, we also have the following version of Theorem \ref{thm:main1-expanded}, when $a\neq 0$.

\begin{thm}\label{thm:main1-EL} Let $a\neq 0$. Assume
\begin{enumerate}
\item $L_a u=0$ in $B_1^+$;
\item $|\lim_{x_n\to 0+} x_n^a u_{x_n}(x',x_n)|\leq \mu$ for $x'\in
  B_1'$ in the sense that
\[
\Big|\int_{B_1^+}(\nabla u\cdot\nabla \psi) x_n^a\Big|\leq
\mu\int_{B_1'}|\psi|,
\]
for any $\psi\in C^\infty_0(B_1)$;
\item $|u|\leq M$ in $B_1^+$.
\end{enumerate}
Then the
  following holds:
\begin{enumerate}[\textup{(}i\/\textup{)}]
\item If $a> 0$, then $u\in C^{0,1-a}(B^+_{1/4})$ with 
$$
\|u\|_{C^{0,1-a}(B^+_{1/4})}\leq C(a,M,\mu,n).
$$
\smallskip
\item If $a<0$, then $u\in C^{1,-a}(B_{1/4}^+\cup B_{1/4}')$ with  
$$
\|u\|_{C^{1,-a}(B_{1/4}^+\cup B_{1/4}')}\leq C(a,M,\mu,n).
$$
\end{enumerate}
\end{thm}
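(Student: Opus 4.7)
The plan is to run the proof of Theorem~\ref{thm:main1-expanded} essentially verbatim, replacing each invocation of the minimizer hypothesis with the analogous statement from Remarks~\ref{rem:holder-EL}, \ref{rem:C1alpha-EL}, and \ref{rem:optgrowth-EL}, all of which hold under hypotheses (1)--(3) whenever $a\neq 0$. No new genuinely analytic input is required; the work is bookkeeping, plus noticing that at every place where the proof of Theorem~\ref{thm:main1-expanded} appealed to the minimizer property it did so only through one of Theorems~\ref{thm:holder}, \ref{thm:thinreg}, \ref{thm:main1-growth}.

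For case (i) ($a>0$), apply Remark~\ref{rem:optgrowth-EL} to obtain the growth $\sup_{B_r(x',0)}|u(y)-u(x',0)|\leq Cr^{1-a}$ for $x'\in B_{1/2}'$ and $r<1/2$, with $C=C(a,M,\mu,n)$. The H\"older-continuity argument of Case~i in Theorem~\ref{thm:main1-expanded} then proceeds unchanged: Lemma~\ref{lem:thin-grad-est} applied on the ball $B_{x_n}(x',x_n)$ produces $|\nabla_{x'}u(x)|\leq Cx_n^{-a}$, and the Poisson-extension trick for the $(-a)$-harmonic function $\tilde u:=x_n^a u_{x_n}$---whose boundary-trace bound on $B_1'$ is exactly hypothesis (2)---yields $|u_{x_n}(x)|\leq Cx_n^{-a}$. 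The dichotomy on whether $|x-y|\geq y_n/2$ or not then delivers the $C^{0,1-a}$ estimate on $B_{1/4}^+$ (there is no need to symmetrize in $x_n$ since the statement only concerns $B_{1/4}^+$).

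For case (ii) ($a<0$), combine the first-order growth of Remark~\ref{rem:optgrowth-EL} with the $C^{1,\alpha}$-regularity along $B_{1/2}'$ granted by Remark~\ref{rem:C1alpha-EL}, which is what gives meaning to $\nabla_{x'}u(x',0)$ and a quantitative bound for it. Interior estimates on $\tilde u$ and on the auxiliary $a$-harmonic function $w(y)=u(y)-u(x',0)-(y-(x',0))\cdot\nabla_{x'}u(x',0)$, together with the equation $\lap_{x'}u+ax_n^{-1}u_{x_n}+u_{x_nx_n}=0$, give $|D^2u(x)|\leq Cx_n^{-1-a}$ on $B_{1/4}^+$. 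Integration up to the thin space, combined with the same dichotomy argument as in Case~i, yields the $C^{1,-a}$ bound.

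The only possible obstacle is verifying that Theorem~\ref{thm:main1-expanded} uses the minimizer hypothesis nowhere except through the three results cited above; inspection confirms this. The restriction $a\neq 0$ is genuinely necessary: as Remark~\ref{rem:optgrowth-EL} records, the function $u=\Re(z\log z)$ satisfies (1)--(3) for $a=0$ yet fails the optimal growth, so the subharmonicity of $u^\pm$ from Lemma~\ref{lem:upm-a-subharm}---which is available for minimizers but not for general solutions of (1)--(3)---cannot be avoided in that borderline case.
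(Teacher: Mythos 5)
Your proposal matches the paper's approach, which proves the theorem in a single sentence by pointing to Remarks~\ref{rem:holder-EL}, \ref{rem:C1alpha-EL}, \ref{rem:optgrowth-EL} together with the proof of Theorem~\ref{thm:main1-expanded}. However, your ``inspection confirms this'' step is not quite airtight: in proving estimate \eqref{eq:uxn-est}, Theorem~\ref{thm:main1-expanded} also invokes Corollary~\ref{cor:energy} for $u$ itself, to get $\|\nabla u\|_{L^2(B_{3/4},|x_n|^a)}\leq CM$, and that corollary rests on Lemma~\ref{lem:upm-a-subharm} (subharmonicity of $u^\pm$), which genuinely uses minimality and is not subsumed by Theorems~\ref{thm:holder}, \ref{thm:thinreg}, or \ref{thm:main1-growth}. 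In the EL setting this Caccioppoli bound must be rederived, but it follows directly from hypotheses (2)--(3): testing with $\psi=u\eta^2$ gives $\bigl|\int_{B_1^+}\nabla u\cdot\nabla(u\eta^2)\,x_n^a\bigr|\leq\mu\int_{B_1'}|u|\eta^2\leq\mu M|B_1'|$, and expanding the left side and absorbing the cross term by Young's inequality yields $\int|\nabla u|^2\eta^2 x_n^a\leq C(M,\mu,n)$. With that supplement, your reduction is complete and coincides with the paper's.
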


\section{Nondegeneracy}\label{sec:nondegeneracy}
 In this section we prove a certain nondegeneracy property for the minimizers that will be instrumental for the proof of our
second main result, Theorem~\ref{thm:main-free-bound}. We follow the outline as given in \cite{AP11}.

\begin{thm}[Nondegeneracy] \label{thm:nondegeneracy}
Fix $0<t<1$, and let $u$ be a minimizer of \eqref{eq:jfunc-B1}. There
exists $\epsilon > 0$ with $\epsilon$ depending only on $\lambda_\pm$
and $t$, such that if $u|_{\partial B_r} \leq  \epsilon r^{1-a} $ $(u|_{\partial B_r} \geq  -\epsilon r^{1-a})$ then 
\[
u(x) \leq 0 \quad (u(x) \geq 0) \qquad \text{for } x \in B_{tr}'.
\]
\end{thm}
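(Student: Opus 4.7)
My plan is to construct an explicit barrier and apply a variational-inequality comparison argument.

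First, by the $(n{-}a)$-homogeneity of $J_a$ under the rescaling $\tilde u(x)=u(rx)/r^{1-a}$, I reduce to $r=1$: it then suffices to show that there exists $\epsilon=\epsilon(t,n,a,\lambda_\pm)>0$ such that $u\le\epsilon$ on $\partial B_1$ forces $u\le 0$ on $\overline{B_t'}$. The analogous claim for $u\ge 0$ follows by applying this to $-u$, which is a minimizer with $\lambda_\pm$ swapped.

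The main step is the construction of a barrier $\tilde V$ in $B_1$. Let $\Psi$ be the $a$-harmonic function in $B_1$ (evenly extended in $x_n$) with boundary data $\frac{\lambda_-}{1-a}|x_n|^{1-a}$ on $\partial B_1$, and set $\Phi:=\Psi-\frac{\lambda_-}{1-a}|x_n|^{1-a}$, so that (by the computation underlying Proposition~\ref{prop:noneg-one-phase}) $L_a\Phi=0$ in $B_1^+$, $\Phi=0$ on $\partial B_1$, and $-\lim_{x_n\to 0^+}x_n^a\Phi_{x_n}=\lambda_-$. Define $\tilde V:=\epsilon-\Phi$. Since $\Psi$ is $a$-harmonic with non-trivial, non-negative boundary data, the strong maximum principle and the interior Harnack inequality for $a$-harmonic functions (available because $|x_n|^a$ is an $A_2$ weight, cf.\ \cite{FKS82}) yield $\Psi(x',0)\ge c(t)\lambda_-$ for all $x'\in\overline{B_t'}$, with $c(t)>0$ depending only on $t,n,a$. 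Consequently $\Phi(x',0)=\Psi(x',0)\ge c(t)\lambda_-$ on $\overline{B_t'}$, and taking $\epsilon:=\tfrac12 c(t)\lambda_-$ gives $\tilde V\le -\tfrac12 c(t)\lambda_-<0$ on $\overline{B_t'}$.

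For the comparison step, test the variational inequality for $u$ (Lemma~\ref{lem:normalbound}, with some $f\in\bar F(u)\subset[-\lambda_-,\lambda_+]$) and the analogous identity for the explicit $\tilde V$ (whose thin-space flux equals $-\lambda_-$ by construction) against the test function $\psi:=(u-\tilde V)^+\in W^{1,2}_0(B_1,|x_n|^a)$ (admissible since $u\le \epsilon=\tilde V$ on $\partial B_1$). Subtracting yields
\[
-\int_{\{u>\tilde V\}\cap B_1^+}|\nabla(u-\tilde V)|^2|x_n|^a\,dx=\int_{B_1'}(f+\lambda_-)\psi\,dx'.
\]
The left-hand side is $\le 0$ and the right-hand side is $\ge 0$ (because $f\ge -\lambda_-$ a.e.\ and $\psi\ge 0$), so both must vanish. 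This forces $\nabla(u-\tilde V)\equiv 0$ a.e.\ on $\{u>\tilde V\}$; by continuity and the boundary condition $u-\tilde V=0$ on $\partial\{u>\tilde V\}$, the set $\{u>\tilde V\}$ must be empty. Therefore $u\le \tilde V\le 0$ on $\overline{B_t'}$, completing the proof.

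The main obstacle is the quantitative lower bound $\Psi\ge c(t)\lambda_-$ on $\overline{B_t'}$. It follows from the weighted Harnack inequality together with the non-triviality of $\Psi|_{\partial B_1}$, but the constant $c(t)$ degenerates as $t\to 1$, which accounts for the $t$-dependence of $\epsilon$.
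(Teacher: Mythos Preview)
Your proof is correct and takes a genuinely different route from the paper's.

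The paper constructs its barrier \emph{implicitly}: it takes the minimizer $u_\epsilon$ of $J_a$ in $B_1$ with constant boundary data $\epsilon$, uses Steiner symmetrization (Lemma~\ref{lem:steiner}) to show its coincidence set is a thin ball $\overline{B_\rho'}$, and then proves $\rho(\epsilon)\to 1$ as $\epsilon\to 0$ by a contradiction argument (Lemma~\ref{lem:coincidence}); the conclusion then follows from the comparison principle for \emph{minimizers} (Lemma~\ref{lem:comparisonprinciple}). Your argument instead produces an \emph{explicit} supersolution $\tilde V=\epsilon-\Phi$ with thin-space flux exactly $-\lambda_-$, and compares $u$ to $\tilde V$ directly at the level of the variational inequality. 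This is more elementary: it bypasses symmetrization and the indirect argument of Lemma~\ref{lem:coincidence}, makes the dependence of $\epsilon$ on $t$ transparent (via the weighted Harnack constant for $\Psi$), and in fact applies to any solution of the variational inequality in Lemma~\ref{lem:normalbound}, not only to minimizers. The paper's approach has the advantage of staying entirely within the minimization framework and exploiting the comparison principle already established; yours trades that for an explicit construction. One minor point: in your final step, rather than invoking continuity of $u$ and $\tilde V$, you can conclude more cleanly that $(u-\tilde V)^+\in W^{1,2}_0(B_1,|x_n|^a)$ has vanishing gradient, hence is identically zero by the weighted Poincar\'e inequality.
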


We will need the following lemma, which is along the lines of Corollary \ref{cor:nondegeneracy}.
\begin{lem}     \label{lem:steiner}
Let $u$ be a minimizer of \eqref{eq:jfunc-B1} such that $u|_{\partial B_1} = M$. Then $u$ is symmetric about the line $(0, \dots, 0, x_n)$ and 
\[
u(x',0) = f(|x'|)
\]
where $f$ is a nondecreasing function. Therefore, if the coincidence set is nonempty then the coincidence set $\{u=0\} = \overline{B}_{\rho}'$ for some $0 \leq \rho \leq 1.$
\end{lem}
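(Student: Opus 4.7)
The plan is as follows. The rotational symmetry $u(x',x_n)=g(|x'|,x_n)$, which immediately gives $u(x',0)=f(|x'|)$ for some $f$, is a direct consequence of Corollary~\ref{cor:nondegeneracy}: the boundary datum $M$ is invariant under every rotation fixing the $x_n$-axis, so by uniqueness of minimizers so is $u$. The substantive task is therefore to prove that $f$ is nondecreasing.

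As a preliminary (working under the assumption $M\geq 0$, which is the case relevant for Theorem~\ref{thm:nondegeneracy}), I would establish the bound $u\leq M$ on $B_1$. The competitor $\min(u,M)$ has the same boundary data as $u$, a Dirichlet term that can only decrease, and, since $M\geq 0$, a thin-space integrand $\lambda_+u^++\lambda_-u^-$ that does not increase. Strict decrease of $J_a$ on $\{u>M\}$ combined with uniqueness of minimizers (from the strict convexity used in Lemma~\ref{lem:comparisonprinciple}) then forces $u\leq M$ almost everywhere, which extends to everywhere by the continuity from Theorem~\ref{thm:main-opt-reg}.

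The monotonicity of $f$ I would prove by a moving-planes argument with reflections in thin directions. Fix a unit vector $e\in\R^{n-1}$ and a parameter $\lambda\in(0,1)$, let $T_\lambda$ denote the reflection across $H_\lambda:=\{x\cdot e=\lambda\}$, and set $\Sigma_\lambda:=B_1\cap\{x\cdot e>\lambda\}$ and $\Sigma_\lambda^*:=T_\lambda(\Sigma_\lambda)$. The identity $|T_\lambda x|^2=|x|^2+4\lambda(\lambda-x\cdot e)$ shows $\Sigma_\lambda^*\subset B_1$ for $\lambda>0$. Because $e$ lies in the thin space, $T_\lambda$ preserves both $\{x_n=0\}$ and the weight $|x_n|^a$, so $J_a$ is invariant under pullback by $T_\lambda$; in particular both $u|_{\Sigma_\lambda^*}$ and $v_\lambda(y):=u(T_\lambda y)$ are minimizers of $J_a$ on $\Sigma_\lambda^*$. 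On $\partial\Sigma_\lambda^*$, which splits into the curved part $T_\lambda(\partial B_1\cap\Sigma_\lambda)$ (where $v_\lambda\equiv M\geq u$ by the preliminary bound) and the flat part $\overline{B}_1\cap H_\lambda$ (where $v_\lambda=u$), we have $u\leq v_\lambda$. The comparison principle (Lemma~\ref{lem:comparisonprinciple}), whose proof rests only on the strict convexity of $J_a$ and thus carries over verbatim to any domain, then yields $u\leq v_\lambda$ on $\Sigma_\lambda^*$. Specializing to $e=e_1$ and $y=(r_1,0,\dots,0)$ with $T_\lambda y=(2\lambda-r_1,0,\dots,0)$, and letting $r_1,\lambda$ vary over $0\leq r_1<\lambda<1$, we conclude that $f(r_1)\leq f(r_2)$ whenever $0\leq r_1<r_2<1$; continuity extends this to $r_2=1$ with $f(1)=M$.

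The final claim about the coincidence set follows because the zero set of a continuous nondecreasing function on $[0,1]$ is either empty or a closed initial interval $[0,\rho]$, forcing $\{u=0\}\cap B_1'=\overline{B}_\rho'$. The main obstacle in this plan is extending the comparison principle from a ball to the lens-shaped domain $\Sigma_\lambda^*$, but this requires only the observation that the proof of Lemma~\ref{lem:comparisonprinciple} is purely variational and uses no specific feature of the ball.
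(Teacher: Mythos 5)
Your argument is correct in its essentials but takes a genuinely different route from the paper. The paper proves radial monotonicity of $u(\cdot,0)$ by Steiner symmetrization of $w=M-u$ in the $x'$-directions (citing Kawohl's book \cite{bK85} for the fact that this symmetrization does not increase the weighted Dirichlet energy $\int|\nabla w|^2|x_n|^a$), together with invariance of the thin-space term and uniqueness of minimizers. You instead run a moving-planes/reflection argument: reflect across a hyperplane $\{x\cdot e=\lambda\}$ with $e$ in the thin space, check $\Sigma_\lambda^*\subset B_1$ via $|T_\lambda x|^2=|x|^2+4\lambda(\lambda-x\cdot e)$, observe that $J_a$ is invariant under the pullback by $T_\lambda$ (since $T_\lambda$ fixes $x_n$), and then compare $u$ against $v_\lambda=u\circ T_\lambda$ on the lens $\Sigma_\lambda^*$ using the variational comparison principle of Lemma~\ref{lem:comparisonprinciple}, which, as you correctly note, is purely a consequence of the identity $J(\max)+J(\min)=J(u)+J(v)$ plus strict convexity and hence works on any subdomain. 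Your approach is arguably more elementary and self-contained, as it dispenses with the (nontrivial) weighted P\'olya--Szeg\H{o}-type inequality needed for the Steiner symmetrization; the paper's approach, on the other hand, directly produces the global Steiner-symmetric structure in one step and avoids the careful bookkeeping of the reflected domain.

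One small point to tighten: the last step, passing from ``$f$ nondecreasing'' to ``$\{u=0\}=\overline{B}_\rho'$,'' is not quite correct as stated, since the zero set of a continuous nondecreasing $f$ on $[0,1]$ need not be an initial interval (e.g.\ $f(r)=r-\tfrac12$). You also need $f\geq 0$, i.e.\ $u\geq 0$, which follows exactly as your bound $u\leq M$ did: compare $u$ with the minimizer $0$ having boundary value $0\leq M$ via Lemma~\ref{lem:comparisonprinciple}. (This sign assumption $M\geq 0$, and hence $u\geq 0$, is also implicit in the paper, which writes the thin-space integral as $\int\lambda_+u$ rather than $\int(\lambda_+u^++\lambda_-u^-)$; in the one application, Lemma~\ref{lem:coincidence}, $M=\epsilon>0$.)
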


\begin{proof}
Extend $u$ to be a function on the cube $Q$ with side length 2, by defining $u(x) = M$ for $x \notin B_1$. We now apply Steiner symmetrization (as defined in \cite[page 82]{bK85}) to the function $w = M - u$ on lines parallel to $\R^{n-1} \times \{0\} $. If we only consider $\{x \mid \ |x_n| > \epsilon\}$,  then $w$ is Lipschitz. Then by \cite[page 82]{bK85}, if we Steiner symmetrize $w$ to obtain $v$ deduce for each $\epsilon>0$:
\[
\int_{B_1 \cap \{|x_n|> \epsilon \} }{|\nabla u|^2}|x_n|^a = \int_{B_1 \cap \{|x_n|> \epsilon \} }{|\nabla w|^2|x_n|^a} 
\geq \ \int_{B_1 \cap \{|x_n|> \epsilon \} }{|\nabla v|^2|x_n|^a} 
\]
$v$ will have the same boundary values as $w$ on $\partial B_1$. 
Then by letting $\epsilon \to 0$ we obtain
\[
\int_{B_1}{|\nabla u|^2|x_n|^a} = \int_{B_1}{|\nabla w|^2|x_n|^a} \geq \ \int_{B_1}{|\nabla v|^2|x_n|^a}.
\]
Finally, we note that 
\[
\int_{B_{1}'}{\lambda_+ u} 
\]
is invariant under Steiner symmetrization. Since minimizers are unique, we see that our minimizer is Steiner symmetric  about the line $(0, \dots ,0, x_n)$ and $\{u=0\}$ is connected and centered at the origin.
\end{proof}

\begin{lem}  \label{lem:coincidence}
Let $u_{\epsilon}$ be the minimizer of \eqref{eq:jfunc-B1} subject to the boundary conditions $u_{\epsilon} \equiv \epsilon$ on $\partial B_1$. Then $u_{\epsilon} \equiv 0$ on $B_{\rho}'$ for some $\rho = \rho (\epsilon)>0$. Furthermore $\rho(\epsilon) \to 1$ as $\epsilon \to 0$. 
\end{lem}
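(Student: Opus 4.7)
The plan is to combine the Steiner symmetry from Lemma~\ref{lem:steiner} with the nondegeneracy result of Theorem~\ref{thm:nondegeneracy}, after first pinning $u_\epsilon$ between $0$ and $\epsilon$.

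First, I would establish that $0\le u_\epsilon\le \epsilon$ on $B_1$. By Lemma~\ref{lem:upm-a-subharm}, both $u_\epsilon^\pm$ are $a$-subharmonic; since $u_\epsilon=\epsilon$ on $\partial B_1$ we have $u_\epsilon^-|_{\partial B_1}=0$ and $u_\epsilon^+|_{\partial B_1}=\epsilon$, so the weak maximum principle for $L_a$-subsolutions (available since $|x_n|^a$ is an $A_2$-weight) yields $u_\epsilon^-\equiv 0$ and $u_\epsilon^+\le \epsilon$ throughout $B_1$. In particular $u_\epsilon\ge 0$ everywhere.

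Second, since the boundary data is the constant $\epsilon$, Lemma~\ref{lem:steiner} applies and gives $u_\epsilon(x',0)=f_\epsilon(|x'|)$ for some nondecreasing $f_\epsilon$. Hence whenever the coincidence set $\{x'\in B_1':u_\epsilon(x',0)=0\}$ is nonempty it must equal a closed ball $\overline{B}_{\rho(\epsilon)}'$ for some $\rho(\epsilon)\in[0,1]$.

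Third, I invoke Theorem~\ref{thm:nondegeneracy}: given any $t\in(0,1)$ it supplies a threshold $\epsilon_0=\epsilon_0(t,\lambda_\pm)>0$ such that the bound $u_\epsilon|_{\partial B_1}\le \epsilon_0\cdot 1^{1-a}$ implies $u_\epsilon\le 0$ on $B_t'$. For $\epsilon\le\epsilon_0$ this hypothesis is immediate, and combined with $u_\epsilon\ge 0$ from the first step it forces $u_\epsilon\equiv 0$ on $B_t'$; thus the coincidence set is nonempty and $\rho(\epsilon)\ge t$. Since $t\in(0,1)$ is arbitrary, we conclude both that $\rho(\epsilon)>0$ for all sufficiently small $\epsilon$ and that $\rho(\epsilon)\to 1$ as $\epsilon\to 0$. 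I do not expect a serious technical obstacle; the statement is essentially a packaging of Theorem~\ref{thm:nondegeneracy} and Lemma~\ref{lem:steiner}, with the one subtle point being the sign step $u_\epsilon\ge 0$, which is what converts the one-sided conclusion of the nondegeneracy theorem into the desired vanishing of $u_\epsilon$ on $B_t'$.
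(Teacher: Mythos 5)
Your proposal is circular. Theorem~\ref{thm:nondegeneracy} is not available at this stage; it is proved in the paper \emph{using} Lemma~\ref{lem:coincidence} — the proof of Theorem~\ref{thm:nondegeneracy} reads ``Lemma~\ref{lem:coincidence} proves that there exists $\epsilon$ \dots such that $\{u_\epsilon = 0\} = B_t'$,'' and then applies the comparison principle. So invoking Theorem~\ref{thm:nondegeneracy} to establish Lemma~\ref{lem:coincidence} is forbidden; the logical ordering is the other way around. The first two steps of your argument (the bound $0 \le u_\epsilon \le \epsilon$ via subharmonicity of $u_\epsilon^\pm$, and the Steiner-symmetric structure of the coincidence set from Lemma~\ref{lem:steiner}) are sound and do agree with the paper, but the step that actually produces a point of the coincidence set — and makes $\rho(\epsilon)\to 1$ — must come from a different source.

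The paper supplies that source by a direct barrier/compactness argument. Suppose, for contradiction, that $u_\epsilon(\cdot,0) > 0$ on some fixed annulus $\mathcal R = \{r_1 < |x'| < r_2\}$ for all $\epsilon$. Then Lemma~\ref{lem:normalbound} pins down the conormal limit $\lim_{x_n\to 0+} x_n^a \partial_{x_n} u_\epsilon(x',x_n)$ to the constant dictated by $\lambda_+$ on that annulus, so $u_\epsilon - \tfrac{\lambda_+}{1-a}|x_n|^{1-a}$ becomes $a$-harmonic in the solid tube over $\mathcal R$. Since $0 \le u_\epsilon \le \epsilon$ forces $u_\epsilon \to 0$ uniformly, passing to the limit would render $-\tfrac{\lambda_+}{1-a}|x_n|^{1-a}$ $a$-harmonic, which is false. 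Hence for some small $\epsilon$ the coincidence set meets $\mathcal R$, and the Steiner symmetry (Lemma~\ref{lem:steiner}) then gives $u_\epsilon \equiv 0$ on $B_{r_1}'$; letting $r_1 \to 1^-$ yields $\rho(\epsilon) \to 1$. Replacing your appeal to Theorem~\ref{thm:nondegeneracy} with this argument is what makes the proof noncircular.
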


\begin{proof}
It is clear that the minimizer $u_{\epsilon} \geq 0$. From the comparison principle $u_{\epsilon_1} \leq u_{\epsilon_2} $ if $\epsilon_1 \leq \epsilon_2$. Suppose by way of contradiction that $u_{\epsilon}(x',0) > 0 $ for all $\epsilon$ and for $x'$  in the ring 
\[
\mathcal{R} = \{(x',0) \in B_{1}' \mid 0 \leq r_1 < |x'| < r_2 \leq 1\}.
\]
By Lemma \ref{lem:normalbound}
\begin{equation*}  
\lim_{t\to 0} t^a\frac{\partial u_{\epsilon}}{\partial x_n}(x',t) = \lambda_+.
\end{equation*}
Thus 
$$
u_{\epsilon} - \tfrac{\lambda_+}{1-a}|x_n|^{1-a}
$$ will be $a$-harmonic in the tube 
$$\mathcal{T} = \{x \in B_1 \mid 0 \leq r_1 < |x'| < r_2 \leq 1\}.$$
Now, $u_{\epsilon} \to u \equiv 0$ uniformly, which implies that
$-\frac{\lambda_+}{1-a} |x_n|^{1-a}$ is $a$-harmonic in $\mathcal{T}$.
This is clearly a contradiction. Thus, there exists
$\epsilon$ and $y' \in \mathcal{R}$ such that $u_{\epsilon}(y',0) = 0$.  Then, by
Lemma \ref{lem:steiner}, $u_{\epsilon} \equiv 0$ on $B_{r_1}'$.  
\end{proof}
We are now able to prove the nondegeneracy result.
\begin{proof}[Proof of Theorem \ref{thm:nondegeneracy}]
First we note that by rescaling we only need to prove Theorem \ref{thm:nondegeneracy} on the unit ball $B_1$. Pick $0<t<1$. Lemma \ref{lem:coincidence} proves that there exists $\epsilon$ depending on $t$ and $\lambda_+$ such that $\{u_{\epsilon} =0\} = B_{t}'$. By the comparison principle if $u$ is a minimizer such that $u \leq u_{\epsilon} = \epsilon$ on $\partial B_1$, then $u \leq 0$ on $B_{t}'$.  
The case for which $u \geq -\epsilon$ is proven similarly.
\end{proof}
Theorem \ref{thm:nondegeneracy}  immediately implies the corollary below, which is the result we will be using later on.
\begin{cor} \label{cor:1/2growth}
If $u$ is a minimizer and $0 \in \Gamma^+$ $(0 \in \Gamma^-)$, then 
\begin{equation*}   %\label{eq:1/2 growth}
\sup_{\partial B_r}u \geq C r^{1-a}\qquad \left( \inf_{\partial B_r}u \leq -C r^{1-a} \right),
\end{equation*}
where $C$ depends only on $a$, $\lambda_\pm$ and $n$.
\end{cor}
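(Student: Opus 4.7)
The corollary is a short contradiction argument built directly on Theorem~\ref{thm:nondegeneracy}. The plan is to fix a convenient value of $t$ (say $t=1/2$) and let $\epsilon=\epsilon(\lambda_\pm,t)>0$ be the constant produced by Theorem~\ref{thm:nondegeneracy}. I will then show that $\sup_{\partial B_r}u\geq\epsilon\,r^{1-a}$; the same $\epsilon$ will serve as the constant $C$ claimed in the statement (and it depends only on $a$, $\lambda_\pm$, $n$ since $t$ is fixed).

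Suppose $0\in\Gamma^+$ and, for contradiction, that $\sup_{\partial B_r}u<\epsilon\,r^{1-a}$ for some admissible $r$. Then the hypothesis $u|_{\partial B_r}\leq\epsilon\,r^{1-a}$ of Theorem~\ref{thm:nondegeneracy} is satisfied, and we conclude
\[
u(x)\leq 0\quad\text{for every }x\in B_{r/2}'.
\]
However, $0\in\Gamma^+=\partial'\{u(\cdot,0)>0\}$ means that every neighborhood of $0$ in the thin space $\R^{n-1}$ contains points where $u(\cdot,0)>0$. In particular, $B_{r/2}'$ must contain such a point, contradicting the displayed inequality. Hence $\sup_{\partial B_r}u\geq\epsilon\,r^{1-a}$, which is the desired lower bound with $C=\epsilon$.

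The case $0\in\Gamma^-$ is handled by the symmetric half of Theorem~\ref{thm:nondegeneracy}: if $\inf_{\partial B_r}u>-\epsilon\,r^{1-a}$, then $u\geq 0$ on $B_{r/2}'$, contradicting the fact that every neighborhood of $0$ contains points where $u(\cdot,0)<0$. There is no genuine obstacle here, since all the work has already been done in Theorem~\ref{thm:nondegeneracy}; the only point one needs to verify is the trivial observation that the definition of $\Gamma^\pm$ as a topological boundary in $\R^{n-1}$ forces points of the phase $\{u>0\}$ (respectively $\{u<0\}$) to accumulate at $0$.
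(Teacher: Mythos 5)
Your argument is correct and is precisely the ``immediate consequence'' that the paper leaves unwritten: fix $t$, invoke the $\epsilon$ of Theorem~\ref{thm:nondegeneracy}, and derive a contradiction from the boundary-point definition of $\Gamma^+$ (every thin neighborhood of a point of $\partial'\{u(\cdot,0)>0\}$ meets $\{u(\cdot,0)>0\}$). Nothing is missing, and the approach matches the paper's.
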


\section{Blowups and global solutions}\label{sec:blow-glob-solut}

In this section we study the blowups and global solutions that will
be the main step towards proving Theorem~\ref{thm:main-free-bound}.

Let $u$ be a minimizer of \eqref{eq:jfunc-B1} and assume that $0\in
\Gamma^+\cup\Gamma^-$. For $r>0$ consider the rescalings
$$
u_r(x)=\frac{u(rx)}{r^{1-a}},\quad x\in B_{1/r}.
$$ 
From the scaling properties of $J_a$, it is easy to see that $u_r$ is
a minimizer of $J_a$ in $B_{1/r}$. By the optimal growth estimate we
will then have that
$$
\sup_{B_R}|u_r|\leq C R^{1-a},\quad (r<1/R) 
$$
when $a\geq 0$, so the family $\{u_r\}_{0<r<1}$ is locally bounded in
$\R^n$. Note that to have the same conclusion when $a<0$ we must assume
additionally that $|\nabla_{x'}u(0)|=0$. The local boundedness implies
boundedness in $W^{1,2}_\loc(\R^n, |x_n|^a)$, by the energy
inequality. Thus, over a sequence $r_k\to 0$ the rescalings
$u_{r_k}$ will converge to a certain $u_0$ weakly in
$W^{1,2}_\loc(\R^n, |x_n|^a)$. (This convergence is actually strong, as
we prove in Lemma~\ref{lem:str-conv} below.) Passing to a subsequence we may assume
that the convergence is strong in $L^2_\loc(\R^n, |x_n|^a)$ and
$L^2_\loc(\R^{n-1}\times\{0\})$. We call such $u_0$ a blowup of $u$ at
the origin. A standard argument also shows that
the blowup $u_0$ will be a minimizer of $J_a$ on any $U\Subset
\R^n$. Such minimizer we will call \emph{global minimizers} of $J_a$.

\medskip Our analysis of blowups starts with the following lemma on
the strong convergence of minimizers.

\begin{lem}[Strong convergence]\label{lem:str-conv} Let $\{u_k\}$ be a sequence of
  minimizers of $J_a$ in $D^+$ that converges weakly in
  $W^{1,2}(D^+,x_n^a)$ to some minimizer $u$. Then, over a
  subsequence, $u_k$ converges strongly to $u$ in $W^{1,2}(U^+,x_n^a)$
  for any $U\Subset D$.
\end{lem}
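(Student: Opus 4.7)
The plan is to exploit minimality via a standard cut-off competitor argument. Fix $U\Subset D$ and pick a cutoff $\eta\in C^\infty_0(D)$ with $0\leq\eta\leq 1$ and $\eta\equiv 1$ on $U$. Setting $w_k:=u_k-u$, the function
\[
v_k:=u_k+\eta(u-u_k)=u+(1-\eta)w_k
\]
coincides with $u_k$ in a neighborhood of $\partial D$, so it is an admissible competitor against $u_k$ on $D^+$. By minimality, $J_a(u_k)\leq J_a(v_k)$, which I will rewrite as
\[
\int_{D^+}\bigl(|\nabla u_k|^2-|\nabla v_k|^2\bigr)x_n^a\,dx\leq 2\int_{D'}\bigl[\lambda_+(v_k^+-u_k^+)+\lambda_-(v_k^--u_k^-)\bigr]dx'.
\]

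The main step is to let $k\to\infty$ in this inequality. A direct expansion using $\nabla v_k=\nabla u+(1-\eta)\nabla w_k-w_k\nabla\eta$ gives
\[
|\nabla u_k|^2-|\nabla v_k|^2=\eta(2-\eta)|\nabla w_k|^2+R_k,
\]
where $R_k$ is a sum of terms each linear in $w_k$ or $\nabla w_k$ tested against a fixed $L^2(x_n^a)$ object. By Rellich-type compactness for the $A_2$-weighted Sobolev space $W^{1,2}(D^+,x_n^a)$ (cited from \cite{FKS82}) we have $w_k\to 0$ strongly in $L^2(D^+,x_n^a)$, while $\nabla w_k\rightharpoonup 0$ in $L^2(D^+,x_n^a)$; these two facts force $\int R_k\,x_n^a\to 0$. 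On the right-hand side, $v_k-u_k=-\eta w_k$, and by the compactness of the trace operator into $L^2(D')$ noted in the notation section, this difference tends to $0$ in $L^2(D')$, so the right-hand side also vanishes in the limit.

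Combining these observations, the minimality inequality collapses to
\[
\limsup_{k\to\infty}\int_{D^+}\eta(2-\eta)|\nabla w_k|^2x_n^a\,dx\leq 0.
\]
Since $\eta(2-\eta)\geq 0$ and equals $1$ on $U$, this yields $\nabla u_k\to\nabla u$ in $L^2(U^+,x_n^a)$, which together with the strong $L^2$ convergence of $u_k$ gives strong convergence in $W^{1,2}(U^+,x_n^a)$, as required.

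The main technical point that could cause trouble is the passage to the limit in the obstacle-type boundary term $\int_{D'}(\lambda_+ v_k^+ + \lambda_- v_k^-)$: I rely on the Lipschitz estimate $|v_k^\pm - u_k^\pm|\leq|v_k-u_k|$ together with strong $L^1$ (hence $L^2$) trace convergence, so no subtle issue arises beyond invoking the compact trace operator. Everything else is routine once the competitor and the identity for $|\nabla u_k|^2-|\nabla v_k|^2$ are written down.
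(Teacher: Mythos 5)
Your proof is correct, but it takes a genuinely different route from the paper's. You argue directly from minimality: with the cut-off competitor $v_k=u_k+\eta(u-u_k)$, the inequality $J_a(u_k)\leq J_a(v_k)$ together with the expansion of $|\nabla u_k|^2-|\nabla v_k|^2$ and compactness (weighted Rellich plus compact trace) forces $\int_{D^+}\eta(2-\eta)|\nabla w_k|^2x_n^a\to 0$. The paper instead goes through the Euler--Lagrange identity of Lemma~\ref{lem:normalbound}: testing the weak formulation of $L_a u_k$ and $L_a u$ against $(u_k-u)\eta^2$ and subtracting gives an integration-by-parts identity whose right-hand side involves the bounded normal-derivative densities $f_k,f$, leading via Young's inequality to the Caccioppoli-type bound $\int_{U^+}|\nabla w_k|^2|x_n|^a\leq C\int_{D'\cap K}|w_k|+C\int_{D^+\cap K}w_k^2|x_n|^a$, and then the same compactness facts finish. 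Your version is self-contained and avoids invoking the variational inequality and its uniform $L^\infty$ bound on $f_k$; the paper's version, once Lemma~\ref{lem:normalbound} is available, yields a cleaner quantitative estimate for $\|\nabla w_k\|_{L^2(U^+,x_n^a)}$ in terms of lower-order norms of $w_k$. Both rely on the same compactness inputs (weighted Sobolev embedding into $L^2(x_n^a)$ and compactness of the trace operator into $L^2(D')$), so neither saves on prerequisites in that respect. One small clarification worth adding to your write-up: the term $w_k^2|\nabla\eta|^2$ in $R_k$ is quadratic in $w_k$, and $2(1-\eta)w_k\,\nabla w_k\cdot\nabla\eta$ is bilinear; these still vanish, the first by strong $L^2(x_n^a)$ convergence of $w_k$ and the second by Cauchy--Schwarz since $\nabla w_k$ is bounded in $L^2(x_n^a)$, but they are not covered by the phrase ``linear in $w_k$ or $\nabla w_k$.''
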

\begin{proof}  Take a test function $\eta\in C^\infty_0(D)$ such that
  \[
  0 \leq \eta \leq 1,\quad \eta \equiv 1 \text{ in a neighborhood of }
  \overline U.
  \]
Then, integrating by parts we have
\begin{align*}
 \int_{D^+}|\nabla (u_k-u)|^2\eta^2
 |x_n|^a&=\int_{D'}(f_k-f)(u_k-u)\eta^2\\
&\qquad-2\int_{D^+}
 (u_k-u)\eta \nabla\eta \nabla (u_k-u)|x_n|^a,
\end{align*}
where $f_k(x')=-\lim_{x_n\to 0+} x_n^a\partial_{x_n}u_k(x',x_n)$ and
$f(x')=-\lim_{x_n\to 0+} x_n^a\partial_{x_n}u(x',x_n)$ in the sense of
Lemma~\ref{lem:normalbound}, which also tells that $f_k$ and $f$ are uniformly bounded
by $\max\{\lambda_\pm\}$. Using that bound and applying the Young's
inequality to the second integral on the right-hand side in the usual
manner, we obtain that
\begin{align*}
\int_{U^+}|\nabla (u_k-u)|^2|x_n|^a&\leq C\int_{D'\cap K}
|u_k-u|+C\int_{D^+\cap K} (u_k-u)^2|x_n|^a,
\end{align*}
where $K=\supp\eta\Subset D$.
The proof now follows from the strong convergence of $u_k$ to $u$ in
$L^2(D'\cap K)$ and $L^2(D^+\cap K,|x_n|^a)$ by the compactness of the
trace operator and the Sobolev embedding.
\end{proof}

Having the strong convergence of minimizers combined with the
Weiss-type monotonicity formula, we immediately obtain the following
property of blowups.

\begin{lem}\label{lem:geneous}
Let $u$ be a minimizer of \eqref{eq:jfunc-B1} with $0 \in \Gamma^+\cup
\Gamma^-$. If
$a<0$ make the further assumption that $|\nabla_{x'} u(0)|=0$. Suppose
also that $u_{r_k}$ converges to a blowup $u_0$. Then $u_0$ is homogeneous of degree $(1-a)$.
\end{lem}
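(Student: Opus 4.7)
The plan is to combine the Weiss-type monotonicity formula (Theorem~\ref{thm:weiss}) with the strong convergence of minimizers (Lemma~\ref{lem:str-conv}) and the scaling invariance of $W$.

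First, I would verify the scaling identity
\[
W(\rho, u_{r}) \;=\; W(\rho r, u),\qquad 0<\rho<1/r,
\]
by a direct change of variables in each of the three terms defining $W$; note that the homogeneity degree $1-a$ built into $u_r = u(r\,\cdot)/r^{1-a}$ is exactly the one that makes $J_a$ scale correctly on $B_\rho$ and matches the weight $r^{n-a}$ on the leading two terms and $r^{n+1-a}$ on the boundary term. Next I would show that $W(0+,u):=\lim_{r\to 0+}W(r,u)$ exists and is finite. Monotonicity from Theorem~\ref{thm:weiss} provides the limit as long as $W$ is bounded below; the optimal growth estimate (Theorem~\ref{thm:main1-growth}, using $0\in\Gamma^\pm$ and, when $a<0$, the extra hypothesis $|\nabla_{x'}u(0)|=0$) gives $|u(x)|\le C|x|^{1-a}$ near $0$, which bounds each term of $W(r,u)$ uniformly for small $r$.

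With this in hand, for each fixed $\rho>0$ and all large $k$ I can write
\[
W(\rho, u_{r_k}) \;=\; W(\rho r_k,\,u) \;\longrightarrow\; W(0+,u)\quad\text{as }k\to\infty.
\]
On the other hand, the weak convergence $u_{r_k}\rightharpoonup u_0$ is upgraded to strong convergence in $W^{1,2}_{\loc}(\mathbb{R}^n,|x_n|^a)$ by Lemma~\ref{lem:str-conv} (applied on $B_{2\rho}$), and strong $L^2(\partial B_\rho,|x_n|^a)$ convergence of the traces (compactness of the trace operator) together with uniform convergence of $u_{r_k}^\pm$ on $B_\rho'$ (using the H\"older estimate from Theorem~\ref{thm:holder}) allow me to pass to the limit in each term of $W(\rho, u_{r_k})$, obtaining
\[
W(\rho, u_{r_k}) \;\longrightarrow\; W(\rho, u_0).
\]
Combining the two convergences yields $W(\rho,u_0)=W(0+,u)$ for every $\rho>0$, so $\rho\mapsto W(\rho,u_0)$ is constant. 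The equality case in Theorem~\ref{thm:weiss} then forces $u_0$ to be homogeneous of degree $1-a$ on every annulus, hence on all of $\mathbb{R}^n$.

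The main technical point I expect to be slightly delicate is justifying the term-by-term passage to the limit in $W(\rho,u_{r_k})\to W(\rho,u_0)$: the Dirichlet-type volume term requires strong $W^{1,2}$ convergence (handled by Lemma~\ref{lem:str-conv}), the thin-space term requires strong $L^1(B_\rho')$ convergence of $u_{r_k}^\pm$, and the spherical term requires strong $L^2(\partial B_\rho,|x_n|^a)$ convergence of traces. Once these three are assembled, the rest is the standard Weiss-formula argument.
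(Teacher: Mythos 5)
Your proposal follows exactly the same approach as the paper: scaling invariance $W(\rho,u_r)=W(\rho r,u)$, strong convergence via Lemma~\ref{lem:str-conv} to pass to the limit term by term, and the constancy characterization in Theorem~\ref{thm:weiss}. You add a bit more detail than the paper (e.g.\ explicitly noting that the optimal growth estimate gives finiteness of $W(0+,u)$, and spelling out what each of the three terms of $W$ requires for convergence), but these are routine elaborations of the same argument, not a different route.
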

\begin{proof} We will use the following scaling property of the Weiss
  energy functional
$$
W(\rho r,u)=W(\rho,u_r).
$$
Now, by Lemma~\ref{lem:str-conv}, we may assume that $u_{r_k}\to u_0$
strongly in $W^{1,2}(U,|x_n|^a)$ for any $U\Subset\R^n$ and we can
pass in the limit in the Weiss energy functional to obtain
$$
W(\rho,u_0)=\lim_{k\to\infty} W(\rho, u_{r_k})=\lim_{k\to \infty}
W(\rho r_k, u)= W(0+,u)
$$
for any $\rho>0$. This implies that $W(\cdot,u_0)\equiv
\mathrm{const}=W(0+,u)$ and consequently that  $u_0$ is homogeneous of
degree $(1-a)$, by the second part of Theorem~\ref{thm:weiss}.
\end{proof}

\begin{lem}[Homogeneous global solutions] \label{lem:blowup}
Let $a\geq 0$ and let $u$ be a global minimizer of $J_a$ and assume $u$ is homogeneous of degree $(1-a)$. Then $u = c|x_n|^{1-a}$ for some constant $c$.
\end{lem}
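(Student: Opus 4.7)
\emph{Plan.} I would proceed in two steps: first show that the trace $v(x'):=u(x',0)$ vanishes identically on $\R^{n-1}$, and then classify the resulting $a$-harmonic Dirichlet problem on $\R^n_+$.

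For the first step I would split according to the sign of $v$. In the case $v\geq 0$ (the case $v\leq 0$ is symmetric, by the roles of $\lambda_\pm$), Proposition~\ref{prop:noneg-one-phase} lets me write $w:=u-\frac{\lambda_+}{1-a}x_n^{1-a}$ as a global homogeneous minimizer of degree $1-a$ for the one-phase fractional obstacle problem. If the coincidence set $\{w(\cdot,0)=0\}$ contains a proper cone through the origin, then $0$ is a free-boundary point of this one-phase problem, where the Almgren frequency at a free-boundary point is known to be at least $1+s$ (with $s=(1-a)/2$); since $1+s>1-a=2s$, this forces $w\equiv 0$ and hence $v\equiv 0$. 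Otherwise $w>0$ on the entire thin space minus the origin, so the Neumann-type condition $-\lim_{x_n\to 0^+}x_n^a w_{x_n}=0$ combined with even reflection extends $w$ $a$-harmonically to $\R^n\setminus\{0\}$; Lemma~\ref{lem:liouville} applied to this extension (which has polynomial growth) shows $w$ is a polynomial, and degree-$(1-a)$ homogeneity together with evenness in $x_n$ again forces $w\equiv 0$.

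If $v$ changes sign, I would use the ACF monotonicity formula (Lemma~\ref{lem:ACF}) in the case $a=0$: homogeneity of $u$ makes $\Phi(r,u^+,u^-)$ constant in $r$, and ACF rigidity then gives that $u^\pm$ are linear on complementary half-spaces through the origin; but then $\lim_{x_n\to 0^+} x_n^a u_{x_n}\equiv 0$ on the thin space, contradicting the required boundary flux $\pm\lambda_\pm\ne 0$ at points where $v\ne 0$. For $a>0$ there is no direct ACF analog, so I would rule out sign change by combining the nondegeneracy of Corollary~\ref{cor:1/2growth} applied at points of $\overline{\Gamma^+}\cap\overline{\Gamma^-}$ with the one-phase analysis above on the complementary cones, essentially reducing a sign-changing homogeneous minimizer to two one-phase homogeneous minimizers glued along a lower-dimensional set, each of which must be trivial by the frequency argument.

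Granted $v\equiv 0$, the function $u$ is $a$-harmonic in $\R^n_+$, even in $x_n$, vanishes on $\{x_n=0\}$, and is homogeneous of degree $1-a$. Writing $u(r\omega)=r^{1-a}\Psi(\omega)$ reduces the PDE to a spherical eigenvalue problem on the hemisphere $S^{n-1}_+$ with Dirichlet data on the equator; the exponent $1-a$ corresponds to the principal eigenvalue, whose eigenspace is one-dimensional and spanned by $\omega_n^{1-a}$. Thus $u=c|x_n|^{1-a}$. The \emph{main obstacle} is the sign-changing case when $a>0$: there the classical ACF formula is unavailable, and one must rely on a more delicate combination of the Weiss-type monotonicity (Theorem~\ref{thm:weiss}) with the sharp one-phase classification to exclude the coexistence of $\Gamma^+$ and $\Gamma^-$ at the origin of a homogeneous global minimizer.
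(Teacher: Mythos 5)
Your plan is genuinely different from the paper's. The paper never attempts to show directly that the trace $u(\cdot,0)$ vanishes; instead it subtracts the ``right'' multiple of $|x_n|^{1-a}$, namely $v := u - c|x_n|^{1-a}$ with $c$ chosen so that $\int_{\partial B_1}|x_n|^a v = 0$, shows by a mean-value argument that $c_\pm := (\lambda_\pm \mp c(1-a))/2$ are nonnegative, and then by one integration by parts (using the flux identity from Lemma~\ref{lem:normalbound}) derives the exact identity
\[
r\Bigl(\int_{B_r}|x_n|^a|\nabla v|^2 + 4\int_{B_r'}(c_+v^+ + c_-v^-)\Bigr) = (1-a)\int_{\partial B_r}|x_n|^a v^2.
\]
If $v\not\equiv 0$, comparing with the $a$-harmonic replacement and invoking Corollary~\ref{cor:weisscor} gives $1 \le N(v,r) \le 1-a$, which is immediately absurd when $a>0$; when $a=0$ it forces the thin integral to vanish, and an odd reflection plus Liouville finishes. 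This argument never splits according to the sign of the trace and treats the sign-changing case with no extra effort.

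The genuine gap in your proposal is exactly where you flag it: the sign-changing case when $a>0$. Proposition~\ref{prop:noneg-one-phase} converts the two-phase problem into the one-phase fractional obstacle problem only when $u$ has a sign on the \emph{entire} thin space, so you cannot apply it ``on complementary cones'' of the thin space and glue; the subtracted function $u - \frac{\lambda_+}{1-a}x_n^{1-a}$ is a fractional-obstacle solution precisely because the boundary flux is $\lambda_+$ wherever $u>0$ \emph{and} lies in $[-\lambda_-,\lambda_+]$ wherever $u=0$, and both facts fail in the region where $u<0$. Likewise, the one-phase frequency gap $N \ge 1+s$ from \cite{CSS08} is a statement about that specific obstacle problem, and there is no available version of it at a point of $\overline{\Gamma^+}\cap\overline{\Gamma^-}$ of the two-phase minimizer. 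Absent the ACF formula (which is specific to $a=0$), your plan to ``combine nondegeneracy with one-phase analysis'' is a description of the difficulty rather than a resolution of it, and that case cannot be skipped: ruling out the coexistence of two phases at the origin of a homogeneous global minimizer is essentially the content of the lemma, and it is later used to prove the separation Theorem~\ref{thm:main2-expanded}.

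Two secondary points. First, in your non-free-boundary subcase you need a removable-singularity step to pass from ``$a$-harmonic in $\R^n\setminus\{0\}$'' to ``$a$-harmonic in $\R^n$'' before invoking Lemma~\ref{lem:liouville}, and at $a=0$ homogeneity of degree $1$ together with evenness in $x_n$ gives a linear function of $x'$ rather than zero outright, so the positivity of $w$ off the origin must be used explicitly. Second, your closing spherical-eigenvalue argument (one-dimensionality of the $1-a$ eigenspace with equatorial Dirichlet data) is plausible but unsubstantiated; the paper sidesteps it entirely because for $a>0$ the contradiction $1 \le 1-a$ already shows $v\equiv 0$, and for $a=0$ the odd reflection reduces the classification to the ordinary Liouville theorem.
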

\begin{proof}
Choose $c$ to be such that if $v = u - c|x_n|^{1-a}$, then 
\begin{equation}  \label{eq:avezero}
\int_{\partial B_1}{|x_n|^a v} = 0.
\end{equation}
We claim that 
\[
\frac{-\lambda_-}{(1-a)} \leq c \leq \frac{\lambda_+}{(1-a)}.
\]
The heuristic idea behind the claim is that if $c$ is too large then $L_{a} v \leq 0 $ in all of $B_1$, and so $v(0)>0$. To proceed proving this claim we  use
the definition of $f$ in as given in Lemma \ref{lem:normalbound}.  
Suppose that $c > \lambda_+ /(1-a)$.  Using that $\div(|x_n|^a \nabla v)=0$ off the thin spaces, we integrate by parts to obtain 
\begin{equation}  \label{eq:decreasing}
\int_{\partial B_r}{|x_n|^a v_{\nu}} = 2\int_{B_{r}'}{(f - c(1-a))}< 0,
\end{equation}
since $f \leq \lambda_+ \leq c(1-a)$. 
Also, for almost every $r$
\[
\frac{d}{dr} \left[ \frac{1}{r^{n-1+a}} \int_{\partial B_r}{|x_n|^a v}  \right] =
\int_{\partial B_r}{|x_n|^a v_{\nu}}.
\]
It follows that 
\[
\frac{1}{r^{n-1+a}} \int_{\partial B_r}{|x_n|^a v} \quad \text{is decreasing in } r.
\]
Then $v(0)>0$. This is a contradiction since $u(0)=0$, and consequently $v(0)=0$. A similar argument shows that $c \geq  -\lambda_- /(1-a)$.

Now we use integration by parts to obtain 
\[
\int_{B_r}{|x_n|^a |\nabla v|^2} = \int_{\partial B_r}{|x_n|^a v v_{\nu}} -2\int_{B_{r}'}{(f-c(1-a))v}.
\]
By homogeneity of $v$ we obtain that $v_{\nu}= (1-a)v/r $. We now turn
to simplifying the integral along the thin space. Note that $v=u$ on the thin
space. Also by Lemma \ref{lem:normalbound}, we may write $f(y')$
explicitly when $u(y',0) \neq 0$. Since we are multiplying by $u$ on
the thin space we only need to consider the case when $u(y',0)
\neq 0$. So our equality above becomes 
\begin{equation}  \label{eq:transition}
\begin{aligned}
\int_{B_r}{|x_n|^a |\nabla v|^2} &= \frac{1-a}{r}\int_{\partial B_r}{|x_n|^a v^2}  \\ 
&\qquad -2\int_{B_{r}'}{(\lambda_+ - c(1-a))u^+ + (\lambda_- + c(1-a))u^- }
\end{aligned}
\end{equation}
We then define $c_+ = (\lambda_+ - c(1-a))/2$ and $c_- = (\lambda_- +
c(1-a))/2$. Now $c_\pm \geq 0$ by our claim above. Also either $c_+
>0$ or $c_- >0$. If we move everything to the left hand side in
\eqref{eq:transition} and multiply by $r$,  we obtain 
\begin{equation}   \label{eq:homogeneous}
r \left(\int_{B_r}{|x_n|^a |\nabla v|^2} + 4\int_{B_r'}{(c_+ v^+ + c_- v^-)}\right) - (1-a)\int_{\partial B_r}{|x_n|^a v^2}=0.
\end{equation}
If $w$ is the $a$-harmonic replacement of $v$ in $B_r$, then $w(0)=0$
by \eqref{eq:avezero} and $w$ is even in $x_n$ since $v$ is even. Now
suppose $v$ is not equivalently zero. From Corollary~\ref{cor:weisscor} we obtain
$$
1\leq N(w,r)\leq N(v,r).
$$
Combining the above inequality with \eqref{eq:homogeneous} we have
\[
1\leq N(w,r)\leq N(v,r) \leq r\frac{\displaystyle\int_{B_r}{|x_n|^a |\nabla v|^2} + 4\int_{B_r'}{(c_+ v^+ + c_- v^-)} }{\displaystyle\int_{\partial B_r}{|x_n|^a v^2}} = 1-a.
\]
If $a>0$, then we obtain an immediate contradiction, and thus for $a>0$, $u \equiv c|x_n|^{1-a}$. If $a=0$, then necessarily 
\[
\int_{B_r'}{(c_+ v^+ + c_- v^-)} = 0.
\]
Then $v(x',0) \equiv 0$. By using an odd reflection in $x_n$ variable,
we obtain a harmonic function, homogeneous of degree $1$ in all of
$\R^n$. Thus, by the Liouville theorem we conclude that $v=c_3 x_n$ if
$x_n>0$ and by even symmetry then  $v = c_3|x_n|$ for all $x$. By \eqref{eq:avezero} we must have $c_3 =0$ and so $v \equiv 0$. 

%%%%%%%%%%%%%%%%%%%%%%%%%%%%%%%%%%%%%%%%%%%%%%%%%%%%%%%%%%%%%%%%%%
%%%%%%%%%%%%%%%%%%%%%%%%%%%%%%%%%%%%%%%%%%%%%%%%%%%%%%%%%%%%%%%%%%
%%%%%%%%%%%%%%%%%%%%%%%%%%%%%%%%%%%%%%%%%%%%%%%%%%%%%%%%%%%%%%%%%%
\end{proof}

We conclude this section with the following generalization of
Lemma~\ref{lem:blowup}, which will be important in the next section
when studying the limits of rescalings of possibly different functions,
when we don't have the homogeneity, but can control the growth at infinity.

\begin{lem}[Global minimizers] \label{lem:global}
Let $u$ be a global minimizer of $J_a$ such that
$0\in\Gamma^+\cup\Gamma^-$ for $a\geq 0$. Assume
also that 
$$|u(x)|\leq C(1+|x|^{1-a}).
$$ 
Then $u = c|x_n|^{1-a}$ for some constant $c$.
\end{lem}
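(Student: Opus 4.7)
The plan is to show that $u$ itself is homogeneous of degree $(1-a)$ on all of $\R^n$; the conclusion $u=c|x_n|^{1-a}$ will then follow immediately from Lemma~\ref{lem:blowup}. Homogeneity, via the second part of Theorem~\ref{thm:weiss}, is equivalent to $W(\cdot,u)$ being constant on $(0,\infty)$, and I will get this by sandwiching $W$ between limits that both vanish.

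The computation that drives everything is that $W(r,c|x_n|^{1-a})=0$ for every $c\in\R$ and every $r>0$. Indeed, $v=c|x_n|^{1-a}$ vanishes on the thin space, killing the $\lambda_\pm$ term, and $v$ is $a$-harmonic off the thin space and homogeneous of degree $1-a$; integrating by parts on $B_r^+$ (using the symmetry in $x_n$ to kill the boundary term on $B_r'$, since $v\equiv 0$ there) gives
$$\int_{B_r}|\nabla v|^2|x_n|^a = \frac{1-a}{r}\int_{\partial B_r}v^2|x_n|^a,$$
which makes the two surviving terms in $W(r,v)$ cancel.

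To identify both limits of $W(r,u)$ I argue by compactness at both scales. The growth bound $|u|\le C(1+|x|^{1-a})$ and the energy inequality (Corollary~\ref{cor:energy}) make each term in $W(r,u)$ bounded uniformly in $r$, so $W(0+,u)$ and $W(\infty,u)$ exist and are finite. Since $0\in\Gamma^+\cup\Gamma^-$, Lemma~\ref{lem:geneous} yields a blowup $u_0$ that is homogeneous of degree $(1-a)$, hence $u_0=c_0|x_n|^{1-a}$ by Lemma~\ref{lem:blowup}; combined with the scaling $W(\rho r,u)=W(\rho,u_r)$ and the strong convergence of rescalings (Lemma~\ref{lem:str-conv}), this gives $W(0+,u)=W(1,u_0)=0$. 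For the blowdowns $u_R(x)=u(Rx)/R^{1-a}$, the growth bound forces $|u_R(x)|\le C(1+|x|^{1-a})$ uniformly in $R\ge 1$, so Lemma~\ref{lem:str-conv} extracts a subsequence $u_{R_k}$ converging strongly in $W^{1,2}_\loc(\R^n,|x_n|^a)$ to a global minimizer $u_\infty$. Passing to the limit in $W(\rho,u_R)=W(\rho R,u)$ gives $W(\rho,u_\infty)\equiv W(\infty,u)$ for all $\rho>0$, so $u_\infty$ is homogeneous of degree $(1-a)$ by Theorem~\ref{thm:weiss}, hence $u_\infty=c_\infty|x_n|^{1-a}$ by Lemma~\ref{lem:blowup}, and the preliminary computation gives $W(\infty,u)=0$. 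Monotonicity forces $W(\cdot,u)\equiv 0$, and Theorem~\ref{thm:weiss} concludes.

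The main obstacle is the existence and identification of the blowdown $u_\infty$: everything else is routine manipulation of the Weiss formula, but the growth assumption $|u(x)|\le C(1+|x|^{1-a})$ is doing double duty, simultaneously keeping the family $\{u_R\}_{R\ge 1}$ locally equibounded (so that Lemma~\ref{lem:str-conv} applies) and keeping $W(r,u)$ bounded at infinity (so that $W(\infty,u)$ makes sense).
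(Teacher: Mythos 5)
Your proof is correct and follows essentially the same route as the paper's: blow up at the origin and blow down at infinity, identify both limits as $c\,|x_n|^{1-a}$ via Lemmas~\ref{lem:geneous} and \ref{lem:blowup}, compute $W(r,c|x_n|^{1-a})\equiv 0$, and squeeze the monotone quantity $W(\cdot,u)$ between two zero limits to force homogeneity, then invoke Lemma~\ref{lem:blowup} once more. You spell out a few steps the paper leaves implicit (the explicit integration by parts showing $W(r,c|x_n|^{1-a})=0$, and the role of the growth hypothesis in keeping $\{u_R\}_{R\geq 1}$ locally equibounded so that Lemma~\ref{lem:str-conv} applies to the blowdown), but the argument is the same.
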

\begin{proof} We first perform a blow-up $u_{r_j}\to u_0$, and
  conclude by Lemma \ref{lem:blowup} that $u_0=c_0|x_n|^{1-a}$. Next,
  considering  again the usual rescalings
$$
u_R(x)=\frac{u(Rx)}{R^{1-a}}
$$
and noticing that $u_R$ are also locally uniformly bounded as
$R\to\infty$, we can extract a subsequence $R_j\to\infty$ such that
$u_{R_j}\to u_\infty$. Arguing precisely as in
Lemma~\ref{lem:geneous}, we obtain that $u_\infty$ is homogeneous and thus, by Lemma \ref{lem:blowup}, $u_\infty=c_\infty|x_n|^{1-a}$.

On the other hand, we can compute that $W(r, |x_n|^{1-a})\equiv0$, which implies
$$
0=W(1,u_0)=\lim_{r\to 0} W(r,u)\leq W(s,u)\leq \lim_{R\to \infty}
W(R,u)=W(1,u_\infty)=0,
$$
for any $s>0$. Hence, $W(u,s)\equiv 0$ and $u$ is homogeneous, and again by Lemma~\ref{lem:blowup}, $u=c|x_n|^{1-a}$.
\end{proof}

\section{Structure of the free boundary}\label{sec:struct-free-bound}

 In this section we prove
Theorem~\ref{thm:main-free-bound} in the following
more expanded form.

\begin{thm}[Structure of the free boundary]\label{thm:main2-expanded} Let $a\geq 0$ and let $u$
  be a bounded minimizer of \eqref{eq:jfunc-B1} such
  that $|u|\leq M$ in $B_1$. Then 
$$
\Gamma^+\cap \Gamma^-=\emptyset.
$$
More
  specifically, there is $c_0=c(a,M,\lambda_\pm, n)>0$ such that if $x_0\in
  \Gamma^+\cap B_{1/2}'$ then $B_{c_0}'(x_0)\cap
  \Gamma^-=\emptyset$. 
\end{thm}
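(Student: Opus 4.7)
I argue by contradiction using a blow-up/compactness scheme. Suppose the uniform separation fails: there exist bounded minimizers $u_j$ of $J_a$ with $|u_j|\le M$ on $B_1$, together with points $x_j\in\Gamma^+_{u_j}\cap B'_{1/2}$ and $y_j\in\Gamma^-_{u_j}$ with $r_j:=|x_j-y_j|\to 0$. Since $x_j\in\Gamma^+_{u_j}$ and $u_j$ is continuous, $u_j(x_j)=0$. Rescale by setting $\tilde u_j(x):=u_j(x_j+r_j x)/r_j^{1-a}$; each $\tilde u_j$ is a minimizer of $J_a$ on $B_{1/(2r_j)}$ with $0\in\Gamma^+_{\tilde u_j}$ and $z_j:=(y_j-x_j)/r_j\in\Gamma^-_{\tilde u_j}$, $|z_j|=1$.

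By the optimal growth estimate at $x_j$ (Theorem~\ref{thm:main1-growth}), $|\tilde u_j(x)|\le C|x|^{1-a}$ on every fixed compact subset of $\R^n$, uniformly in $j$. Combining this with the H\"older bound of Theorem~\ref{thm:holder} and the strong convergence of minimizers from Lemma~\ref{lem:str-conv}, I pass to a subsequence so that $\tilde u_j\to u_0$ locally uniformly and strongly in $W^{1,2}_{\loc}(|x_n|^a)$, while $z_j\to z_0\in\partial B_1$. The limit $u_0$ is a global minimizer of $J_a$ with $|u_0(x)|\le C|x|^{1-a}$ and $u_0(0)=u_0(z_0)=0$. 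Applying Corollary~\ref{cor:1/2growth} to $\tilde u_j$ at $0$ and at $z_j$ and letting $j\to\infty$,
\[
\sup_{\partial B_\rho}u_0\ge c_*\rho^{1-a},\qquad \inf_{\partial B_\rho(z_0)}u_0\le -c_*\rho^{1-a}
\]
for every sufficiently small $\rho>0$, with $c_*=c_*(a,\lambda_\pm,n)>0$. In particular $u_0$ takes both strictly positive and strictly negative values.

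To conclude, it suffices to show $u_0=c|x_n|^{1-a}$ for some constant $c$, which is incompatible with the sign change above. I adapt the strategy of the proof of Lemma~\ref{lem:global}. Performing a blow-up of $u_0$ at $0$, the Weiss monotonicity formula (Theorem~\ref{thm:weiss}) together with the strong convergence of the rescalings shows that any limit is a homogeneous global minimizer of degree $1-a$; by Lemma~\ref{lem:blowup}, it equals $c_0|x_n|^{1-a}$. A direct calculation gives $W(r,c|x_n|^{1-a})\equiv 0$, so $W(0+,u_0)=0$. An analogous blow-down at infinity yields $W(\infty,u_0)=0$. Since $W(\cdot,u_0)$ is nondecreasing, $W(\cdot,u_0)\equiv 0$; the rigidity clause of Theorem~\ref{thm:weiss} forces $u_0$ to be homogeneous of degree $1-a$, and Lemma~\ref{lem:blowup} then yields $u_0=c|x_n|^{1-a}$, contradicting the sign change. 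This will deliver the desired uniform constant $c_0=c_0(a,M,\lambda_\pm,n)>0$.

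The most delicate point is pushing the two-phase structure of the $\tilde u_j$ through the limit. What makes the argument work cleanly is that Corollary~\ref{cor:1/2growth} is formulated with sup/inf over full spheres $\partial B_\rho$ rather than on thin balls, so the sign-change bounds for $u_0$ follow directly from local uniform convergence, without having to track the evolution of the thin free boundaries $\Gamma^\pm$. This lets me avoid having to prove that $0\in\Gamma^+_{u_0}$ or $z_0\in\Gamma^-_{u_0}$ directly; the Weiss-based homogeneity argument above only requires $u_0(0)=0$ and the optimal growth.
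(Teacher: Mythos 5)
Your proof is correct and follows essentially the same contradiction/blow-up scheme as the paper: rescale at a sequence of pairs of approaching $\Gamma^+$ and $\Gamma^-$ points, pass to a global minimizer $u_0$ with growth $|u_0|\le C|x|^{1-a}$, deduce $u_0=c|x_n|^{1-a}$ via the Weiss monotonicity classification, and contradict the nondegeneracy at $0$ and at $z_0$. The one place you deviate is that you inline the argument of Lemma~\ref{lem:global} rather than citing it, precisely because you noticed that its stated hypothesis $0\in\Gamma^+_{u_0}\cup\Gamma^-_{u_0}$ is not easy to verify for the limit; this is a legitimate concern (the paper cites Lemma~\ref{lem:global} without checking that hypothesis), and you are right that the Weiss blow-up/blow-down mechanism there only uses $u_0(0)=0$ together with the $1-a$ growth bound, so your more careful formulation is harmless and arguably tightens a small imprecision in the original.
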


\begin{proof} Let $x_0\in \Gamma_u^+\cap B_{1/2}$. By translation and rescaling, we
  may assume that $x_0=0$. Then, if the statement of the theorem
  fails, then there exists a sequence of minimizers $u_j$ of
  \eqref{eq:jfunc-B1} satisfying 
$$
|u_j|\leq M,\quad 0\in \Gamma^+_{u_j},\quad
|\nabla_{x'}u_j(0)|=0\quad\text{if }a<0.
$$
together with a sequence of points $x^j\in \Gamma^-_{u_j}$ such that
$r_j=|x^j|\to 0$. We will show that this is impossible. Let
$$
v_j(x)=(u_j)_{r_j}(x)=\frac{u_j(r_j x)}{r_j^{1-a}}.
$$ 
By Theorem~\ref{thm:holder} combined with Theorem~\ref{thm:main1-growth}, $\{v_j\}$ is uniformly bounded in
$C^{\alpha}(B_R)\cap W^{1,2}(B_R,|x_n|^a)$ for every ball $B_R$. Moreover, we know:
\begin{enumerate}
\item $v_j$ is a minimizer of $J_a$ in  $B_R\subset B_{1/r_j}$.
\item By Theorem~\ref{thm:main1-growth} we have the growth estimates 
$$
|v_j(x)|\leq C|x|^{1-a},\quad |x|\leq 1/(2r_j).
$$
\item Since $0\in \Gamma_{v_j}^+$ and $z^j=x^j/r_j\in
    \Gamma_{v_j}^-\cap B_1'$, by Theorem~\ref{thm:nondegeneracy} we have 
$$
\sup_{B_t}v_j\geq Ct^{1-a},\quad \inf_{B_t(z^j )}v_j\leq -Ct^{1-a}.
$$
\end{enumerate}
Consequently, we can extract a subsequence, again labeled $v_j$,
converging locally uniformly to $v_0$, a global minimizer of
$J_a$. Due to the nondegeneracy property (3) above, we also know that
$v_0$ has nontrivial positive and negative phases. On the other hand,
by (2) we also know that
$$
|v_0(x)|\leq C|x|^{1-a},\quad\text{for any }x\in\R^n
$$
and thus from Lemma \ref{lem:global} we conclude that $v_0 = c|x_n|^{1-a}$. Clearly, this is not possible.
\end{proof}

Theorem~\ref{thm:main-free-bound} now follows.

\begin{proof}[Proof of Theorem~\ref{thm:main-free-bound}] This is an
  immediate consequence of the local boundedness (see 
  Corollary~\ref{cor:loc-bdd}), Theorem~\ref{thm:main2-expanded}, and
  a simple covering argument by balls. 
\end{proof}

We conclude this section and the paper by providing an example showing
that when $a<0$ it is possible for the two phases to meet at a point where the thin gradient vanishes. This is
done by constructing a minimizer whose Weiss energy at a point is negative. 

\begin{xmp}[The two phases can meet where the gradient vanishes, when $a<0$] \label{xmp:canmeet}

For simplicity in our construction we will assume $\lambda_+ =
\lambda_-=\lambda$. Fix some $i\in \{1,\ldots, n-1\}$ and let $\ell(x)=Mx_i$.
Then consider the minimizer of the functional $J_a$ with
boundary values given by $u = \ell$ on $\partial B_1$. We claim that there is an $M>0$ such that $0\in \Gamma^+\cap \Gamma^-$ and $\nabla_{x'} u(0)=0$. By
symmetry of the boundary values of $u$ on $\partial B_1$ combined with
our choice of $\lambda_+ = \lambda_-$ and the fact that minimizers are
unique, it is clear that $u(0)=0$. Now we look at the Weiss energy of
$\ell$ in $B_1$. By homogeneity of $\ell$ 
\[
W(1,\ell)= M \int_{B_{1}'}{\lambda|x_i|} + aM^2\int_{\partial B_1}{x_i^2 |x_n|^a},
\] 
and we see that for large enough $M$ (since $a<0$), $W(1,\ell)
<0$. Since $u$ and $\ell$ coincide on $\dd B_1$, this implies that $W(1,u)<0$. Now we argue that $0\in \Gamma_u^+\cup\Gamma_u^-$. Indeed, otherwise 
%Or we can use Boundary Harnack here to justify taking a blow-up
then $u \equiv 0$ in $B_{\rho}'$ for some small $\rho$ (recall
$u(0)=0$). If we reflect $u$ by odd reflection across the thin space
we obtain $\tilde{u}$ which is $a$-harmonic in the solid ball
$B_{\rho}$ so that $W(\rho, u)=W(\rho, \tilde{u})$. By Corollary~\ref{cor:weisscor}, $W(\rho, \tilde{u}) \geq 0$, contradicting
$W(\rho,u)\leq W(1,u)<0$. Thus, $0 \in
\Gamma_u^+\cup\Gamma_u^-$ and by symmetry and uniqueness, $0 \in
\Gamma_u^+\cap\Gamma_u^-$.

If $|\nabla_{x'}u(0)|=0$, then the claim is proved. If not, $|\nabla_{x'}u(0)|\neq 0$ for $M$ large enough. In this case let 
$$
\overline M = \inf\{M:\,|\nabla_{x'}u(0)|\neq 0\}. 
$$
Non-degeneracy (cf. Theorem \ref{thm:nondegeneracy}) implies $u\equiv 0$ near the origin for $M$ small enough. Hence, $|\nabla_{x'}u(0)|=0$ for $M$ small enough and thus $\overline M>0$. Interior $C^{1,\alpha}$-convergence (cf. Theorem \ref{thm:main1-expanded}) implies $|\nabla_{x'}u_{\overline M}(0)|=0$. We claim that $0\in \Gamma_{u_{\overline M}}$. Suppose towards a contradiction that $0\not\in \Gamma_{u_{\overline M}}$.

By symmetry and uniqueness of minimizers, $u(0)=0$ and $0\in \Gamma_{u}^+$ ($0\in \Gamma_{u}^-$) implies $0\in \Gamma_{u}^-$ ($0\in \Gamma_{u}^+)$. For $M>\overline M$, the fact that $0\in \Gamma^\pm_{u}$ implies
\begin{equation}
\label{eq:M-non-deg}
\sup_{\dd B_r} u\geq Cr^{1-a},\quad \inf_{\dd B_r} u\leq -Cr^{1-a}
\end{equation}
for some $C>0$ and for $0<r<1$. By the uniform convergence, this holds true also for $u_{\overline M}$. If $0\not\in \Gamma_{u_{\overline M}}$ then $u_{\overline M}\equiv 0$ in $B_\rho'$ for some $\rho>0$. Now we perform a blowup at the origin of $u_{\overline M}$ and obtain $u_0$, which is homogeneous of degree $1-a$, by Lemma \ref{lem:geneous}. By the scaling invariance, $u_0$ satisfies \eqref{eq:M-non-deg}. Since $u\equiv 0$ in $B_\rho'$ for some $\rho>0$, then $u_0\equiv 0$ in $\R^{n-1}$. If we extend $u_0$ by odd reflection in the $x_n$-variable, we obtain an $a$-harmonic function in the whole space. Since $u_0$ is homogeneous of degree $1-a$, we must have $u_0=c|x_n|^{1-a}$ (cf. Lemma \ref{lem:thin-grad-est} for instance), contradicting \eqref{eq:M-non-deg}.

%We further claim that $|\nabla_{x'}u(0)|>0$. Indeed, if $|\nabla_{x'}u(0)|=0$, by taking a
%blowup at $0$, we will have that $W(0+,u)=W(1,u_0)=0$ (see
%Lemma~\ref{lem:blowup}). This is clearly a contradiction. Thus,
%$|\nabla_{x'}u(0)|>0$, and consequently $0\in \Gamma^+_u\cap \Gamma^-_u$.
\end{xmp}

\appendix

\section{Proof of Weiss monotonicity formula}

\begin{proof}[Proof of Theorem \ref{thm:weiss}]
 The proof is very similar to that of Theorem 4.3 in \cite{a11} and we
 omit the details that are derived identically.

Using that $u$ is a minimizer of \eqref{eq:jfunc-B1}, we obtain that
for a.e.\ $r\in(0,1)$ we have the equality

\begin{equation}   \label{eq:almgrenmin}
\begin{aligned}
0 & = (n-2+a)\int_{B_r}{|x_n|^a |\nabla u|^2} - r \int_{\partial B_r}{|x_n|^a \left( |\nabla u|^2 -2u_{\nu}^2 \right)} \\
  & \qquad + 4(n-1)\int_{B_{r}'}{(\lambda_+ u^+ + \lambda_- u^-)}
    -4r \int_{\partial B_{r}'}{(\lambda_+ u^+ + \lambda_- u^-)} \\
  & = (n-1) \int_{B_r}{|x_n|^a|\nabla u|^2} - r \int_{\partial B_r}{|x_n|^a|\nabla u|^2} \\
  & \qquad + 4(n-1) \int_{B_{r}'}{(\lambda_+ u^+ + \lambda_- u^-)}
    -4r \int_{\partial B_{r}'}{(\lambda_+ u^+ + \lambda_- u^-)} \\
  & \qquad -(1-a) \int_{B_r}{|x_n|^a|\nabla u|^2} + 2r \int_{\partial B_r}{|x_n|^a u_{\nu}^2}.
\end{aligned}
\end{equation}
By Lemma~\ref{lem:normalbound}, 
\[
\int_{B_r}{|x_n|^a |\nabla u|^2} = \int_{\partial B_r}{|x_n|^a u u_{\nu}} + 2 \int_{B_{r}'}{(\lambda_+ u^+ + \lambda_- u^-)},
\]
and hence
\begin{alignat*}{2}0 & = (n-a) \int_{B_r}{|x_n|^a |\nabla u|^2} - r \int_{\partial B_r}{|x_n|^a|\nabla u|^2} \\
  & \qquad + 4(n-a) \int_{B_{r}'}{(\lambda_+ u^+ + \lambda_- u^-)}
    -4r \int_{\partial B_{r}'}{(\lambda_+ u^+ + \lambda_- u^-)} \\
  & \qquad - 2(1-a)\int_{\partial B_r}{|x_n|^a u \cdot u_{\nu}} + 2r \int_{\partial B_r}{|x_n|^a u_{\nu}^2} 
\end{alignat*}
Now multiply both sides of the equation by $-r^{-n+1-a}$ to obtain
that for a.e.\ $r\in (0,1)$
\begin{alignat*}{2}
0 & = \left[\frac{1}{r^{n-a}} \int_{B_r}{|x_n|^a|\nabla u|^2} \right]'
    + \left[\frac{4}{r^{n-a}} \int_{B_r '}{(\lambda_+ u^+ + \lambda_- u^-)} \right]' \\
  & \qquad  
   - \frac{2}{r^{n-a}} \int_{\partial B_r}{|x_n|^a\left(\frac{(1-a)u u_\nu}{r} -  u_{\nu}^2 \right)}.
\end{alignat*}
As in  \cite{a11}, we also have that for almost every $r$
\begin{equation}      \label{eq:der}
\frac{\d}{\d r}\left[\frac{1-a}{r^{n+1-a}} \int_{\partial B_r}{|x_n|^a u^2} \right] = 
\frac{2}{r^{n-a}} \int_{\partial B_r}{|x_n|^a \left( \frac{(1-a)u u_{\nu}}{r} - \frac{(1-a)^2u^2}{r^2}  \right)}.
\end{equation}
We then add and subtract the piece from \eqref{eq:der} to obtain for almost every $r$
\begin{alignat*}{2}
0 & = \left[\frac{1}{r^{n-a}} \int_{B_r}{|x_n|^a|\nabla u|^2} \right]'
    + \left[\frac{4}{r^{n-a}} \int_{B_r '}{(\lambda_+ u^+ + \lambda_- u^-)} \right]' \\
  & \qquad - \left[\frac{1-a}{r^{n+1-a}}  \int_{\partial B_r}{|x_n|^a u^2} \right]' 
    - \frac{2}{r^{n-a}} \int_{\partial B_r}{|x_n|^a \left(\frac{(1-a)u}{r} -  u_{\nu} \right)^2}
\end{alignat*}
Thus, $W' \geq 0$, and $W' = 0$ on the interval $r_1 <r< r_2$ if and only if $u$ is homogeneous of degree $2s=(1-a)$ on the ring $r_1 < |x| < r_2$.
\end{proof}

\bibliographystyle{amsalpha}
\bibliography{ref}  
\end{document}